\documentclass[10pt]{article}
\usepackage{mathtools}
\usepackage{graphicx}
\usepackage{amsmath,amsfonts,latexsym,amscd,amssymb,theorem}
\usepackage{graphicx}
\usepackage[ansinew]{inputenc}
\usepackage{epsfig}
\usepackage{amsmath}
\usepackage{amssymb}
\usepackage{afterpage}
\usepackage{selinput}
\usepackage{fancyhdr}
\hoffset = 0pt \voffset = 0pt \textwidth = 480pt \textheight = 630pt
\topmargin = 0pt \headheight = 10pt \headsep = 10pt \oddsidemargin =
0pt \evensidemargin = 0pt \marginparwidth = 0pt \marginparsep = 0pt

\newcommand{\ba}{\begin{eqnarray}}
\newcommand{\ea}{\end{eqnarray}}

\newtheorem{thm}{Theorem}[section]

\newtheorem{conjecture}{Conjecture}

\newtheorem{theorem}[thm]{Theorem}

\newtheorem{definition}[thm]{Definition}
\newtheorem{lemma}[thm]{Lemma}

\newtheorem{corollary}[thm]{Corollary}
\newtheorem{remark}[thm]{Remark}

\makeatletter
\newcommand*{\rom}[1]{\expandafter\@slowromancap\romannumeral #1@}
\makeatother

\makeatletter
\renewcommand\@biblabel[1]{#1.}
\makeatother
\pagestyle{fancy}
\fancyhf{}
\fancyfoot{}
\fancyfoot[C]{\thepage}
\rhead{Soukaina ZAYAT, Salman GHAZAL}
\begin{document}
\title{\textbf{Erd\"{o}s-Hajnal Conjecture for New Infinite Families of Tournaments}}
\maketitle


\begin{center}
\author{Soukaina Zayat \footnote{Department of Mathematics Faculty of Sciences I, Lebanese University, KALMA Laboratory, Beirut - Lebanon.\vspace{1.5mm} (soukaina.zayat.96@outlook.com)}, Salman Ghazal \footnote{Department of Mathematics Faculty of Sciences I, Lebanese University, Beirut - Lebanon.\\ (salman.ghazal@ul.edu.lb)}\footnote{Department of Mathematics and Physics, School of Arts and Sciences, Beirut International University, Beirut - Lebanon. \vspace{2mm} (salman.ghazal@liu.edu.lb)}}
\end{center}

\begin{abstract}
Erd\"{o}s-Hajnal conjecture states that for every undirected graph $H$ there exists $ \epsilon(H) > 0 $ such that every undirected graph on $ n $ vertices that does not contain $H$ as an induced subgraph contains a clique or a stable set of size at least $ n^{\epsilon(H)} $. This conjecture has a directed equivalent version stating that for every tournament $H$ there exists $ \epsilon(H) > 0 $ such that every $H$-free $n$-vertex tournament $T$ contains a transitive subtournament of order at least $ n^{\epsilon(H)} $. This conjecture is known to hold for a few infinite families of tournaments. In this paper we construct two new infinite families of tournaments $-$ the family of so-called galaxies with spiders and the family of so-called asterisms, and we prove the correctness of the conjecture for these two families. \end{abstract}
\textbf{Keywords:} Tournament, transitivity, Erd\"{o}s-Hajnal Conjecture,  ordering.

\section{Introduction}
\hspace{3.5mm} Let $ G $ be an undirected graph. We denote by $ V(G) $ the set of its vertices and by $ E(G) $ the set of its edges. We call $ \mid$$G$$\mid$ $:=$ $ \mid$$V(G)$$\mid$ the \textit{size} of $G$. A \textit{clique} in $G$ is a set of pairwise adjacent vertices and a \textit{stable set} in $G$ is a set of pairwise nonadjacent vertices. A \textit{digraph} $D$ is a pair $(V,E)$ of sets, such that $E\subset V \times V$, and for every $(x,y)\in E$ we must have $(y,x)\notin E$. In particular if $(x,y)\in E$, then $x \neq y$. $E$ is the arc set and $V$ is the vertex set and they are denoted by $E(D)$ and $V(D)$ respectively. We say that a digraph $D'$ is a \textit{subdigraph} of a digraph $D$ if $V(D') \subseteq V(D)$ and $E(D') \subseteq E(D)$. Let $X \subseteq V(D)$. The \textit{subdigraph of} $D$ \textit{induced by} $X$ is denoted by $D$$\mid$$X$, that is the digraph with vertex set $X$, such that for $x,y \in X$, $(x,y) \in E(D$$\mid$$ X)$ if and only if $(x,y) \in E(D)$. We say that $D$ \textit{contains} $D'$ if $D'$ is isomorphic to a subdigraph of $D$. A \textit{tournament} is a directed graph (digraph) such that for every pair $u$ and $v$ of vertices, exactly one of the arcs $(u,v)$ or $(v,u)$ exists. A tournament is \textit{transitive} if it contains no directed cycle. Let $T$ be a tournament. We write $\mid$$T$$\mid$ for $\mid$$V(T)$$\mid$ and we say that $\mid$$T$$\mid$ is the \textit{size} of $T$. If $(u,v)\in E(T)$, then we say that $u$ is \textit{adjacent to} $v$ (alternatively: $v$ is an \textit{outneighbor} of $u$), and we write $u\rightarrow v$. Also we say that $v$ is \textit{adjacent from} $u$ (alternatively: $u$ is an \textit{inneighbor} of $v$), and we write $v\leftarrow u$. For two sets of vertices $V_{1},V_{2}$ of $T$, we say that $V_{1}$ is \textit{complete to} (resp. \textit{from}) $V_{2}$ if every vertex of $V_{1}$ is adjacent to (resp. from) every vertex of $V_{2}$, and we write $V_{1} \rightarrow V_{2}$ (resp. $V_{1} \leftarrow V_{2}$). We say that a vertex $v$ is complete to (resp. from) a set $V$ if $\lbrace v \rbrace$ is complete to (resp. from) $V$, and we write $v \rightarrow V$ (resp. $v \leftarrow V$). Given a tournament $H$, we say that $T$ \textit{contains} $H$ if $H$ is isomorphic to $T$$\mid$$X$, for some $X \subseteq V(T)$. If $T$ does not contain $H$, we say that $T$ is $H$-$free$.  
\vspace{2.5mm}

Erd\"{o}s and Hajnal proposed the following conjecture (EHC) \cite{jhp}:
\begin{conjecture} For any undirected graph $H$ there exists $ \epsilon(H) > 0 $ such that every $n$-vertex undirected graph that does not contain $H$ as an induced subgraph contains a clique or a stable set of size at least $ n^{\epsilon(H)}. $
\end{conjecture}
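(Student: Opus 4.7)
The plan is to proceed by induction on $|V(H)|$, exploiting closure properties of the class of graphs satisfying the Erdős-Hajnal property (EHP) to reduce the problem to a manageable family of ``irreducible'' $H$. The base cases $|V(H)| \le 4$ are classical and can be treated by direct Ramsey-type arguments, since up to complementation there are only finitely many such $H$ and each has been verified to satisfy EHP. For the inductive step, I would invoke the Alon-Pach-Solymosi substitution theorem: if $H$ arises from substituting a graph $H_2$ for a vertex of $H_1$, and both $H_1$ and $H_2$ satisfy EHP, then so does $H$. This allows us to restrict attention to \emph{prime} graphs, i.e.\ those with no nontrivial homogeneous set (module), since every graph decomposes canonically through modular decomposition into primes plus cliques and stable sets.

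For prime $H$, the next step is to pass to the bipartite formulation, equivalent to EHP by results of Fox and Sudakov: it suffices to show that every $H$-free graph on $n$ vertices contains two disjoint vertex subsets $A,B$ of sizes $\Omega(n)$ between which the edge density is either at least $1-\delta(H)$ or at most $\delta(H)$. Given such a biased pair, a standard recursive extraction (iteratively descending into the denser or sparser side and tracking how the exponent deteriorates) yields a clique or a stable set of size $n^{\epsilon(H)}$ for some $\epsilon(H)>0$ depending only on $\delta(H)$ and $|V(H)|$. My route would be to produce a biased pair by applying Szemerédi's regularity lemma to an arbitrary $H$-free graph, analyzing the reduced graph, and showing that every regular partition of an $H$-free graph must contain two parts whose bipartite density is bounded away from $\tfrac{1}{2}$ — otherwise a probabilistic embedding argument would locate an induced copy of $H$.

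The main obstacle, and the reason the conjecture has resisted over three decades of attack, is precisely the quantitative gap between what $H$-freeness is known to force and what EHP requires. The best general bound, due to Erdős and Hajnal themselves, produces a clique or stable set of size only $2^{c\sqrt{\log n}}$, and even the Fox-Sudakov strengthening yields linear-sized $\varepsilon$-homogeneous pairs only for specific $H$. Bridging the gap from a \emph{single} biased pair of linear size to a \emph{polynomial-sized} homogeneous set requires iterating the pair-finding step through $\Theta(\log n)$ levels without losing a constant factor at each stage — a control that no currently known partition or regularity-based technique achieves uniformly in $H$. I would therefore expect the argument sketched above to succeed in full generality only after a new structural decomposition of $H$-free classes is discovered; in the absence of such a tool, the realistic deliverable is a proof of EHP for restricted subfamilies of prime $H$ possessing strong symmetry or forest-like backbone, which is the route this paper itself takes in the tournament setting.
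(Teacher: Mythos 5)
This statement is Conjecture~1, the Erd\H{o}s--Hajnal Conjecture itself: the paper does not prove it (and no proof is known); it only proves the directed version for two special infinite families of tournaments, using the Alon--Pach--Solymosi equivalence between Conjecture~1 and Conjecture~2. So there is no ``paper's own proof'' for your proposal to match, and your proposal is not a proof either --- as your final paragraph concedes, it is an outline of known reductions that stops exactly where the open problem begins.

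Concretely, the gap is in the passage from a single biased pair to a polynomial-size homogeneous set, and in the claim feeding into it. The reduction to prime graphs via substitution (Alon--Pach--Solymosi) is legitimate, and R\"odl-type arguments via the regularity lemma do give, for every fixed $H$ and $\delta>0$, a linear-sized set (or pair of sets) of density at most $\delta$ or at least $1-\delta$ in any $H$-free graph. But the ``standard recursive extraction'' you invoke does not yield $n^{\epsilon(H)}$: iterating the biased-pair step loses a constant factor of the exponent at each of the $\Theta(\log n)$ levels, which is precisely why the best general bounds remain of the form $2^{c\sqrt{\log n}}$ (Erd\H{o}s--Hajnal, Fox--Sudakov) rather than polynomial. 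Moreover, the intermediate assertion that every regular partition of an $H$-free graph has two parts with density bounded away from $\tfrac12$ is not by itself enough, and the equivalence you attribute to Fox--Sudakov (EHP equivalent to linear-size $\delta$-biased pairs for $H$-free graphs) is a reduction whose hypothesis is itself unproven in general. Since the statement is an open conjecture, the only defensible deliverable here is what the paper actually does: prove the (equivalent) directed statement for restricted families, which your proposal acknowledges but does not carry out.
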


In 2001 Alon et al. proved \cite{fdo} that Conjecture $1$ has an equivalent directed version, as follows:
\begin{conjecture} \label{a} For any tournament $H$ there exists $ \epsilon(H) > 0 $ such that every $ H$-free tournament with $n$ vertices contains a transitive subtournament of size at least $ n^{\epsilon(H)}. $
\end{conjecture}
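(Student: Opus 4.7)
The plan is to attack this through the Erd\H{o}s--Hajnal property (EH-property) framework: call a tournament $H$ an EH-tournament if the conclusion of Conjecture \ref{a} holds for some $\varepsilon(H)>0$, and aim to prove every $H$ is an EH-tournament by a structural induction on the shape of $H$. Since the conjecture is still open for arbitrary $H$, the realistic goal I would set myself is to build up a large class of EH-tournaments by combining a general closure operation with ad hoc arguments for prime tournaments, matching the strategy the paper announces for galaxies with spiders and for asterisms.

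The first step is the substitution closure. I would prove (or invoke, as is standard in the Chudnovsky--Safra style) that substituting EH-tournaments into an EH-tournament yields an EH-tournament. Together with the trivial base case (the one-vertex tournament), this reduces the full conjecture to the prime case, where $H$ has no nontrivial homogeneous set. It also means the new families defined later in the paper only need to be verified modulo substitution, which is how I would formulate the main technical lemma: every galaxy with spiders, and every asterism, is an EH-tournament.

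The second, and genuinely hard, step would be the extraction of a large \emph{structured} configuration in any $H$-free $n$-vertex tournament $T$. Here the plan is to look for a long coherent sequence of vertex classes with prescribed uniform domination between consecutive classes, of total size polynomial in $n$; once such a sequence is in hand, a transitive subtournament of the required size $n^{\varepsilon(H)}$ is produced by a greedy or recursive selection across the classes. To build the sequence I would use a dichotomy argument: at each step, either some vertex of $T$ splits the remaining tournament into sufficiently unbalanced forward/backward out-neighborhoods, in which case I recurse on the larger side and gain a factor in the exponent, or every vertex splits $T$ roughly evenly, and I use the absence of an induced copy of $H$ to add a further class to the coherent sequence, exploiting the transitive backbone of $H$ (the spine of the galaxy with its spider legs, or the central ordering of the asterism) to forbid obstructions.

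The main obstacle is precisely this extraction step. For a general prime $H$ nobody knows how to force enough structure in an $H$-free tournament, which is exactly why Conjecture \ref{a} remains open. For the two specific families targeted here, the hope is that the rigid transitive backbone of galaxies-with-spiders and asterisms makes the dichotomy above quantitative: the spider legs and asterism points provide enough "blocking pieces" that any failure to recurse on an unbalanced neighborhood split can be turned into an extension of the coherent sequence. Controlling the resulting multi-level recursion so that the exponent $\varepsilon(H)$ stays strictly positive, and is expressible in terms of the combinatorial parameters of $H$, will be the delicate quantitative part of the argument.
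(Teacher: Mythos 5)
There is a genuine gap, and it is one you concede yourself: the statement you were asked to prove is Conjecture \ref{a}, which is an open problem, and your proposal does not prove it. The paper does not prove it either --- it records it as a conjecture, citing Alon, Pach and Solymosi only for the fact that it is \emph{equivalent} to the undirected Erd\H{o}s--Hajnal conjecture, and then proves it for two specific new families (asterisms and galaxies with spiders). Your plan correctly identifies that the general case hinges on an extraction step that nobody knows how to carry out for an arbitrary prime $H$, but identifying the obstacle is not a proof; the reduction via substitution closure plus a ``dichotomy / coherent sequence'' argument is a program, not an argument, and the quantitative heart of it (that $H$-freeness forces the next class of the coherent sequence, with an exponent that stays positive) is exactly what is missing.

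It is also worth noting that even for the two families the paper does handle, its route is different from the one you sketch. You propose substitution closure together with a greedy recursion on unbalanced out-neighbourhood splits. The paper instead works with $\epsilon$-critical tournaments: assuming $H$ does not satisfy the conjecture, it takes an $H$-free $\epsilon$-critical tournament, invokes Theorem \ref{i} (and Corollary \ref{c}) to obtain a smooth $(c,\lambda,w)$-structure corresponding to an auxiliary ``corresponding digraph'' of $H$, and then embeds $H$ into that structure via the counting lemmas (Lemmas \ref{s}, \ref{r}, \ref{g}, and the embedding Lemmas \ref{k}, \ref{d}, \ref{o}, \ref{l}), reaching a contradiction. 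No substitution argument and no neighbourhood-split dichotomy appear anywhere in the paper; the ``structured configuration'' you hope to extract is supplied wholesale by the Berger--Choromanski--Chudnovsky smooth-structure theorem rather than built by recursion. So even as a blueprint for the special cases, your proposal would have to be replaced by (or reworked into) the smooth-structure machinery to match what is actually proved here.
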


A tournament $H$ \textit{satisfies the Erd\"{o}s-Hajnal Conjecture (EHC)} (equivalently: $H$ has the \textit{Erd\"{o}s-Hajnal property}) if there exists $ \epsilon(H) > 0 $ such that every $ H$-free tournament $T$ with $n$ vertices contains a transitive subtournament of size at least $ n^{\epsilon(H)}. $\vspace{2.5mm}

The Erd\"{o}s-Hajnal property is a \textit{hereditary property} \cite{ppp}, that is if a tournament $H$ has the Erd\"{o}s-Hajnal property, then all its subtournaments also have the Erd\"{o}s-Hajnal property.\vspace{3mm}

This paper is organized as follows:
\begin{itemize}
\setlength\itemsep{0em}
\item In section $2$ we give some definitions and preliminary lemmas, and we prove some lemmas needed in the proof of the main results in this paper.
\item In section $3$ we construct the infinite family of tournaments $-$ the family of so-called asterisms and prove that every asterism satisfies $EHC$. And as a consequence, every subtournament of an asterism satisfies $EHC$.
\item In section $4$ we construct the infinite family of tournaments $-$ the family of so-called galaxies with spiders and prove that every galaxy with spiders satisfies $EHC$.  
\item In section $5$ we define the ''\textit{merging operation}" for galaxies with spiders and asterisms, allowing us to build larger tournaments satisfying $EHC$ from smaller ones.
\end{itemize}
\section{Definitions and Preliminary Lemmas}
\hspace{3.5mm} In what follows, we denote by $[l]:=\{1,2,...,l\}$ for every positive integer $l$. Denote by $tr(T)$ the largest size of a transitive subtournament of a tournament $T$. For $X \subseteq V(T)$, write $tr(X)$ for $tr(T$$\mid$$X)$. Let $X, Y \subseteq V(T)$ be disjoint. Denote by $e_{X,Y}$ the number of directed arcs $(x,y)$, where $x \in X$ and $y \in Y$. The \textit{directed density from $X$ to} $Y$ is defined as $d(X,Y) = \frac{e_{X,Y}}{\mid X \mid .\mid Y \mid} $. We call $T$ $ \epsilon$-\textit{critical} for $ \epsilon > 0 $ if $tr(T) < $ $ \mid $$T$$ \mid^{\epsilon} $ but for every proper subtournament $S$ of $T$ we have: $tr(S) \geq $ $ \mid $$S$$ \mid^{\epsilon}. $
\begin{lemma} \cite{ml} \label{h}
Every tournament on $2^{k-1}$ vertices contains a transitive subtournament of size at least $k$.
 \end{lemma}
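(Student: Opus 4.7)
The plan is to prove the statement by induction on $k$, exploiting the basic fact that in any tournament some vertex has large out-degree or large in-degree, and then recursing on the appropriate neighborhood.

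First I would dispose of the base case $k=1$: a tournament on $2^{0}=1$ vertex is itself a transitive subtournament of size $1$, so the claim holds trivially. Equally, $k=2$ is immediate because any tournament on $2$ vertices is transitive.

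For the inductive step, assume the statement holds for some $k\geq 1$, and let $T$ be a tournament on $2^{k}$ vertices. Pick any vertex $v \in V(T)$. Since the out-degree and in-degree of $v$ sum to $2^{k}-1$, at least one of them is $\geq \lceil (2^{k}-1)/2\rceil = 2^{k-1}$. By symmetry (reversing all arcs if necessary) I may assume $v$ has out-degree at least $2^{k-1}$. Let $N^{+}(v)$ denote the set of out-neighbors of $v$ and choose any subset $X\subseteq N^{+}(v)$ with $|X|=2^{k-1}$. Applying the induction hypothesis to $T\mid X$ produces a transitive subtournament $T\mid Y$ with $|Y|\geq k$. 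Because $v\to X\supseteq Y$, the set $\{v\}\cup Y$ induces a tournament whose only directed cycles would have to involve $v$; but all arcs incident to $v$ within this set point out of $v$, so no directed cycle can contain $v$, and $T\mid(\{v\}\cup Y)$ is transitive of size at least $k+1$. This closes the induction.

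There is no real obstacle here: the whole argument is a one-line greedy step combined with induction. The only point that requires a moment of care is verifying that at least one of the in-degree or out-degree of $v$ reaches $2^{k-1}$, which follows from the parity computation $2^{k}-1 = 2\cdot 2^{k-1}-1$ and the pigeonhole principle. The hereditary structure of transitivity (an arc from $v$ to every vertex of a transitive set keeps the whole thing transitive) is what makes the recursion clean, so writing out the induction with this observation is essentially the entire proof.
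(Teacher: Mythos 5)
Your induction is correct and complete: the pigeonhole step $\lceil(2^{k}-1)/2\rceil=2^{k-1}$ is right, and appending a dominating vertex $v$ to a transitive set $Y\subseteq N^{+}(v)$ indeed cannot create a directed cycle. The paper states this lemma without proof (it is cited to Stearns), and your argument is precisely the standard classical proof of that result, so there is nothing to compare beyond noting that you have supplied the omitted details correctly.
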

\begin{lemma} \cite{polll} \label{e} For every $N$ $ > 0 $, there exists $ \epsilon(N) > 0 $ such that for every $ 0 < \epsilon < \epsilon(N)$ every $ \epsilon $-critical tournament $T$ satisfies $ \mid $$T$$\mid$ $ \geq N$.
\end{lemma}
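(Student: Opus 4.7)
The plan is to prove the contrapositive: if $T$ is $\epsilon$-critical with $|T| = n < N$, then $\epsilon$ is bounded below by a positive quantity depending only on $N$. The proof should need nothing beyond Lemma~\ref{h} and the definition of $\epsilon$-critical; the whole argument reduces to playing a logarithmic lower bound on $tr(T)$ against the sub-polynomial upper bound $n^\epsilon$ forced by criticality.

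First, the case $n = 1$ is impossible, since then $tr(T) = 1 = n^\epsilon$, violating the strict inequality $tr(T) < n^\epsilon$. Hence $n \geq 2$. Applying Lemma~\ref{h} with $k = \lfloor \log_2 n \rfloor + 1$ (so that $2^{k-1} \leq n$) gives the universal lower bound
\[
tr(T) \;\geq\; \lfloor \log_2 n \rfloor + 1 \;\geq\; 2.
\]
Combining this with $tr(T) < n^\epsilon$ yields $n^\epsilon > \lfloor \log_2 n \rfloor + 1$, which upon taking logarithms rearranges to
\[
\epsilon \;>\; \frac{\log(\lfloor \log_2 n \rfloor + 1)}{\log n},
\]
a strictly positive quantity for every $n \geq 2$.

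To finish, I would set
\[
\epsilon(N) \;:=\; \min_{2 \leq n \leq N-1} \frac{\log(\lfloor \log_2 n \rfloor + 1)}{\log n},
\]
which is a minimum over a finite set of positive reals, hence positive. For any $0 < \epsilon < \epsilon(N)$ and any $\epsilon$-critical tournament $T$, the displayed inequality rules out $2 \leq |T| < N$, so necessarily $|T| \geq N$. There is essentially no obstacle here: the statement is bookkeeping, and the only substantive ingredient is the logarithmic growth of $tr$ from Lemma~\ref{h}, which outruns $n^\epsilon$ for $\epsilon$ sufficiently small and $n$ in any prescribed bounded range.
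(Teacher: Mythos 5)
The paper imports this lemma from \cite{polll} without proof, and your argument is the standard one: Lemma~\ref{h} forces $tr(T)\geq \lfloor\log_2 n\rfloor+1$ for every tournament on $n\geq 2$ vertices, while criticality forces $tr(T)<n^{\epsilon}$, and these are incompatible once $\epsilon$ is below your explicit threshold for each $n<N$. The proof is correct; the only cosmetic point is that the minimum defining $\epsilon(N)$ ranges over integers $2\leq n<N$ and should be assigned a default positive value (say $1$) when that range is empty, i.e.\ when $N\leq 2$, a degenerate case already covered by your observation that no $1$-vertex tournament is $\epsilon$-critical.
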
 
\begin{lemma} \cite{polll} \label{v} Let $T$ be an $ \epsilon $-critical tournament with $\mid$$T$$\mid$ $ = n$ and $\epsilon$,$c > 0 $ be constants such that $ \epsilon < log_{\frac{c}{2}}(\frac{1}{2}). $ Then for every two disjoint subsets $X, Y \subseteq V(T)$ with $ \mid $$X$$ \mid$ $ \geq cn, \mid $$Y$$ \mid$ $ \geq cn $ there exist an integer $k \geq \frac{cn}{2} $ and vertices $ x_{1},...,x_{k} \in X $ and $ y_{1},...,y_{k} \in Y $ such that $ y_{i} $ is adjacent to $ x_{i} $ for $i = 1,...,k. $
\end{lemma}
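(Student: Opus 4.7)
The plan is to argue by contradiction via a maximum matching between $Y$ and $X$ in the ``wrong'' direction. Let $k$ be the largest integer for which there exist $x_1,\dots,x_k \in X$ and $y_1,\dots,y_k \in Y$ (pairwise distinct) with $y_i \rightarrow x_i$ for all $i$, and assume for contradiction that $k < cn/2$. Set $X' = X\setminus\{x_1,\dots,x_k\}$ and $Y' = Y\setminus\{y_1,\dots,y_k\}$; then $|X'| > cn/2$ and $|Y'| > cn/2$. The maximality of $k$ is designed to force that no vertex of $Y'$ is adjacent to a vertex of $X'$ (otherwise we could extend the collection), so in the tournament $T$ we have $X' \rightarrow Y'$.

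Next I would use $\epsilon$-criticality. Because $X$ and $Y$ are disjoint and nonempty, $T|X'$ and $T|Y'$ are proper subtournaments of $T$, hence
\[
tr(T|X') \;\geq\; |X'|^{\epsilon} \;>\; (cn/2)^{\epsilon}, \qquad tr(T|Y') \;\geq\; |Y'|^{\epsilon} \;>\; (cn/2)^{\epsilon}.
\]
Since $X' \rightarrow Y'$, a transitive subtournament of $T|X'$ followed by a transitive subtournament of $T|Y'$ (each in its own transitive order) produces a transitive subtournament of $T$ whose size is the sum of the two. Therefore
\[
tr(T) \;\geq\; tr(T|X') + tr(T|Y') \;>\; 2\,(c/2)^{\epsilon}\,n^{\epsilon}.
\]

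The only arithmetic step is now to unpack the hypothesis $\epsilon < \log_{c/2}(1/2)$. Since $0 < c/2 < 1$ (the statement is vacuous otherwise), the base of the logarithm lies in $(0,1)$, so $\log_{c/2}$ is decreasing and the hypothesis is equivalent to $(c/2)^{\epsilon} > 1/2$. Plugging this in gives $tr(T) > n^{\epsilon}$, contradicting the assumption $tr(T) < n^{\epsilon}$ from $\epsilon$-criticality. Hence $k \geq cn/2$, as required.

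The main thing to get right is the maximality argument (pairs must be pairwise disjoint, which is why $X'$ and $Y'$ shrink by exactly $k$ each) and the translation of the condition on $\epsilon$ into $(c/2)^{\epsilon} > 1/2$; everything else is a clean application of $\epsilon$-criticality plus the elementary fact that $A\cup B$ is transitive whenever $A,B$ are transitive and $A\rightarrow B$. No delicate estimate beyond this single inequality is required, so I do not anticipate a serious obstacle.
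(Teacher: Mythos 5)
Your argument is correct and is essentially the standard proof of this lemma from the cited reference \cite{polll} (the paper itself only quotes the lemma without proof): take a maximal family of pairs, observe that maximality forces $X'$ complete to $Y'$, and concatenate transitive subtournaments of the two leftover sets to contradict $tr(T)<n^{\epsilon}$ via $(c/2)^{\epsilon}>1/2$. All the details you flag (disjointness of the pairs, properness of $T|X'$ and $T|Y'$, and the monotonicity of $\log_{c/2}$) are handled correctly, so there is nothing to fix.
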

\begin{lemma} \cite{polll} \label{f} Let $T$ be an $ \epsilon $-critical tournament with $\mid$$T$$\mid$ $ =n$ and $\epsilon$,$c,f > 0 $ be constants such that  $ \epsilon < log_{c}(1 - f)$. Then for every $A \subseteq V(T)$ with $ \mid $$A$$ \mid$ $ \geq cn$ and every transitive subtournament $G$ of $T$ with $ \mid $$G$$ \mid$ $\geq f.tr(T)$ and $V(G) \cap A = \phi$, we have: $A$ is not complete from $V(G)$ and $A$ is not complete to $V(G)$.
\end{lemma}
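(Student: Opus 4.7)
The plan is to argue by contradiction. Suppose that $A$ is complete from $V(G)$, that is, $V(G)\to A$; the case where $A$ is complete to $V(G)$ is entirely symmetric (one just reverses the role of ``before/after'' in the transitive orderings below). The goal will be to construct a transitive subtournament of $T$ strictly larger than $tr(T)$, which is absurd.

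The first move is to apply the $\epsilon$-critical hypothesis to $T|A$. For this I must check that $A$ is a \emph{proper} subset of $V(T)$: since $f>0$ and $tr(T)\geq 1$, the set $V(G)$ is nonempty, and $V(G)\cap A=\emptyset$, hence $A\subsetneq V(T)$. So $T|A$ is a proper subtournament of $T$, and by $\epsilon$-criticality
\[
 tr(T|A)\;\geq\;|A|^{\epsilon}\;\geq\;(cn)^{\epsilon}.
\]
Let $G'$ be a transitive subtournament of $T|A$ realizing this bound.

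The second move is the gluing step. Because $V(G')\subseteq A$ and $V(G)\to A$, every vertex of $G$ dominates every vertex of $G'$, so concatenating the transitive order of $G$ with that of $G'$ produces a transitive subtournament of $T$ on $V(G)\cup V(G')$ of size
\[
 |G|+|G'|\;\geq\;f\cdot tr(T)+(cn)^{\epsilon}.
\]
To obtain the desired contradiction with the definition of $tr(T)$, it suffices to check that this quantity exceeds $tr(T)$, equivalently that $(cn)^{\epsilon}>(1-f)\,tr(T)$. Using the $\epsilon$-critical upper bound $tr(T)<n^{\epsilon}$, it is enough to show $c^{\epsilon}\geq 1-f$.

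The final step is the arithmetic: since $A\subsetneq V(T)$ we may assume $0<c<1$, so $\log c<0$, and (with $0<1-f<1$) the hypothesis $\epsilon<\log_{c}(1-f)=\log(1-f)/\log c$, after multiplying through by the negative quantity $\log c$, becomes $\epsilon\log c>\log(1-f)$, i.e.\ $c^{\epsilon}>1-f$, which is exactly what was required. The proof is essentially a one-shot application of $\epsilon$-criticality combined with a concatenation of transitive orders; the only point that needs care is the sign bookkeeping in the logarithmic inequality (the base $c$ is less than $1$), together with the mild observation that $A$ is a proper subset of $V(T)$ so that $\epsilon$-criticality can legitimately be invoked on $T|A$.
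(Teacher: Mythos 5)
Your proof is correct and is essentially the standard argument for this lemma (which the paper only cites from \cite{polll} without reproducing a proof): assuming $V(G)\rightarrow A$, apply $\epsilon$-criticality to the proper subtournament $T$$\mid$$A$ to get a transitive set of size at least $(cn)^{\epsilon}>(1-f)n^{\epsilon}>(1-f)tr(T)$, and concatenate it after $G$ to exceed $tr(T)$. The sign bookkeeping for the base-$c<1$ logarithm and the check that $A$ is proper are both handled correctly, so there is nothing to add.
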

\begin{lemma} \label{s}
Let $f,c,\epsilon > 0$ be constants, where $0 <  f,c < 1$ and $0 < \epsilon < min\lbrace log_{\frac{c}{2}}(1-f), log_{\frac{c}{4}}(\frac{1}{2})\rbrace$. Let $T$ be an $ \epsilon $-critical tournament with $\mid$$T$$\mid$ $ =n$, and   let $S_{1},S_{2}$ be two disjoint transitive subtournaments of $T$ with $ \mid $$S_{1}$$ \mid$ $\geq f.tr(T)$ and $ \mid $$S_{2}$$ \mid$ $\geq f.tr(T)$. Let $A_{1},A_{2}$ be two disjoint subsets of $V(T)$ with $ \mid $$A_{1}$$ \mid$ $\geq cn$, $ \mid $$A_{2}$$ \mid$ $\geq cn$, and $A_{1},A_{2} \subseteq V(T) \backslash (V(S_{1})\cup V(S_{2}))$. Then there exist vertices $a,x,s_{1},s_{2}$ such that $a\in A_{1}, x\in A_{2}, s_{1}\in S_{1}, s_{2}\in S_{2}$, $\lbrace a,s_{2}\rbrace \leftarrow x$, and $s_{1}\leftarrow a$. Similarly there exist vertices $d_{1},d_{2},u_{1},u_{2}$ such that $d_{1}\in A_{1}, d_{2}\in A_{2}, u_{1}\in S_{1}, u_{2}\in S_{2}$, $d_{1}\leftarrow\lbrace u_{1},d_{2}\rbrace$, and $d_{2}\leftarrow u_{2}$.
\end{lemma}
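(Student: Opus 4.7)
The plan is to filter $A_1$ and $A_2$ with Lemma \ref{f} to discard vertices whose adjacency with $S_1$ or $S_2$ is ``wrong'', and then to apply Lemma \ref{v} to the filtered sets to extract the arc between $A_1$ and $A_2$ that stitches everything together.

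For the first assertion, set $B_i := \{v \in A_i : V(S_i) \to v\}$ for $i=1,2$. If $|B_i| \geq (c/2)n$, then Lemma \ref{f} applied with constant $c/2$ --- permitted because $\epsilon < \log_{c/2}(1-f)$, $|S_i| \geq f \cdot tr(T)$, and $A_i \cap V(S_i) = \emptyset$ --- asserts that $B_i$ is not complete from $V(S_i)$, contradicting the very definition of $B_i$. Hence $A_i' := A_i \setminus B_i$ satisfies $|A_i'| > (c/2)n$, and every vertex of $A_i'$ has an out-neighbour in $V(S_i)$. Applying Lemma \ref{v} to the disjoint sets $A_1', A_2'$ with constant $c/2$ (valid because $\epsilon < \log_{c/4}(1/2) = \log_{(c/2)/2}(1/2)$) yields $a \in A_1'$ and $x \in A_2'$ with $x \to a$; choosing $s_1 \in V(S_1)$ with $a \to s_1$ and $s_2 \in V(S_2)$ with $x \to s_2$ delivers $\{a, s_2\} \leftarrow x$ together with $s_1 \leftarrow a$, as required.

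The second assertion follows from the symmetric argument using the reversed filters $C_i := \{v \in A_i : v \to V(S_i)\}$: the ``not complete to $V(G)$'' clause of Lemma \ref{f} again forces $|C_i| < (c/2)n$, and an identical invocation of Lemma \ref{v} on $A_i \setminus C_i$ supplies $d_1 \in A_1, d_2 \in A_2$ with $d_2 \to d_1$, after which one selects $u_i \in V(S_i)$ with $u_i \to d_i$. The only real care needed is matching the two $\epsilon$-bounds in the hypothesis to the correct applications: Lemma \ref{f} at level $c/2$ consumes the bound $\log_{c/2}(1-f)$, while Lemma \ref{v} on sets of size $> (c/2)n$ uses constant $c/2$ and therefore consumes $\log_{c/4}(1/2)$. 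I do not expect any deeper obstacle, since the two filtering steps act independently on $(S_1, A_1)$ and $(S_2, A_2)$, so the four chosen vertices automatically assemble into the joint adjacency pattern demanded by the lemma.
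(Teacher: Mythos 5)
Your proposal is correct and follows essentially the same route as the paper: filter each $A_i$ via Lemma \ref{f} (your $B_i$ is exactly the complement of the paper's $A_i^{*}$ in $A_i$), then apply Lemma \ref{v} to the filtered halves to obtain the arc from $A_2$ to $A_1$, and finish by picking the witnesses in $S_1,S_2$. The bookkeeping of which $\epsilon$-bound feeds which lemma matches the paper's as well.
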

\begin{proof}
We will prove only the first statement  because the latter can be proved analogously. Let $A_{1}^{*} = \lbrace a \in A_{1}: \exists$ $ s \in S_{1}$ and $s \leftarrow a \rbrace$ and let $A_{2}^{*} = \lbrace x \in A_{2}: \exists$ $ v \in S_{2}$ and $v \leftarrow x \rbrace$. Then $A_{1}\backslash A_{1}^{*}$ is complete from $S_{1}$ and $A_{2}\backslash A_{2}^{*}$ is complete from $S_{2}$. Now assume that $\mid$$A_{1}^{*}$$\mid$ $< \frac{\mid A_{1} \mid}{2}$, then $\mid$$A_{1}\backslash A_{1}^{*}$$\mid$ $\geq \frac{\mid A_{1} \mid}{2} \geq \frac{c}{2}n$. Since $\mid$$ S_{1}$$\mid$ $\geq f.tr(T)$ and since $\epsilon < log_{\frac{c}{2}}(1-f)$, then Lemma \ref{f} implies that $A_{1}\backslash A_{1}^{*}$ is not complete from $S_{1}$, a contradiction. Then $\mid$$A_{1}^{*}$$\mid$ $\geq \frac{\mid A_{1} \mid}{2} \geq \frac{c}{2}n$. Similarly we prove that $\mid$$A_{2}^{*}$$\mid$ $\geq  \frac{c}{2}n$. Now since $\epsilon < log_{\frac{c}{4}}(\frac{1}{2})$, then Lemma \ref{v} implies that there exists $ k \geq \frac{c}{4}n$, there exist vertices $ a_{1},...,a_{k} \in A_{1}^{*}$, and there exist vertices $ x_{1},...,x_{k} \in A_{2}^{*}$, such that $a_{i} \leftarrow x_{i}$ for $i = 1,...,k$.  So, there exist $ a_{1} \in A_{1}^{*}$, $ x_{1} \in A_{2}^{*}$, $ s_{1} \in S_{1}$, and $ s_{2} \in S_{2}$, such that $\lbrace a_{1},s_{2}\rbrace \leftarrow x_{1}$ and $s_{1}\leftarrow a_{1}$. $\hfill{\square}$ 
\end{proof}
\begin{lemma}\label{r}
Let $f_{1},...,f_{m},c,\epsilon > 0$ be constants, where $0 <  f_{1},...,f_{m},c < 1$ and $0 < \epsilon < log_{\frac{c}{2m}}(1-f_{i})$ for $i=1,...,m$. Let $T$ be an $ \epsilon $-critical tournament with $\mid$$T$$\mid$ $ =n$, and   let $S_{1},...,S_{m}$ be m disjoint transitive subtournaments of $T$ with $ \mid $$S_{i}$$ \mid$ $\geq f_{i}.tr(T)$ for $i=1,...,m$. Let $A \subseteq V(T) \backslash (\bigcup_{i=1}^{m} V(S_{i}))$ with $ \mid $$A$$ \mid$ $ \geq cn$. Then there exist vertices $s_{1},...,s_{m},a$ such that $a\in A$, $s_{i}\in S_{i}$ for $i=1,...,m$, and $\lbrace s_{1},...,s_{m-1} \rbrace \leftarrow \lbrace a \rbrace \leftarrow \lbrace s_{m} \rbrace$. Similarly there exist vertices $u_{1},...,u_{m},b$ such that $b\in A$, $u_{i}\in S_{i}$ for $i=1,...,m$, and $\lbrace s_{1} \rbrace \leftarrow \lbrace a \rbrace \leftarrow \lbrace s_{2},...,s_{m} \rbrace $. Similarly there exist vertices $p_{1},...,p_{m},g$ such that $g\in A$, $p_{i}\in S_{i}$ for $i=1,...,m$, and $\lbrace g \rbrace$ is complete to $\lbrace p_{1},...,p_{m} \rbrace$. Similarly there exist vertices $q_{1},...,q_{m},v$ such that $v\in A$, $q_{i}\in S_{i}$ for $i=1,...,m$, and $\lbrace v \rbrace$ is complete from $\lbrace q_{1},...,q_{m} \rbrace$.
\end{lemma}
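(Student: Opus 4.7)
The plan is to prove all four statements by a single union-bound argument on ``bad'' subsets of $A$, whose sizes are controlled by Lemma \ref{f}. I describe the first statement in detail; the other three follow the same recipe with different choices of ``bad'' direction at each $S_{i}$.

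For the first statement I seek $a\in A$ that beats at least one vertex of $S_{i}$ for $i=1,\ldots,m-1$ and loses to at least one vertex of $S_{m}$. For $i=1,\ldots,m-1$ let $B_{i}=\{\,a\in A:S_{i}\to a\,\}$ (the vertices of $A$ beaten by every vertex of $S_{i}$), and let $C=\{\,a\in A:a\to S_{m}\,\}$. Any vertex $a\in A\setminus(B_{1}\cup\cdots\cup B_{m-1}\cup C)$ works: $a\notin B_{i}$ produces an $s_{i}\in S_{i}$ with $s_{i}\leftarrow a$, and $a\notin C$ produces an $s_{m}\in S_{m}$ with $a\leftarrow s_{m}$.

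The key step is to show each bad set is small. By construction $B_{i}$ is complete from $S_{i}$ and $|S_{i}|\ge f_{i}\cdot tr(T)$. If $|B_{i}|\ge\frac{c}{2m}n$, then since $\epsilon<\log_{c/2m}(1-f_{i})$, Lemma \ref{f} applied with constants $\frac{c}{2m}$ and $f_{i}$ would force $B_{i}$ not to be complete from $S_{i}$, a contradiction; so $|B_{i}|<\frac{c}{2m}n$, and the dual application gives $|C|<\frac{c}{2m}n$. Union-bounding,
\[ \bigl|B_{1}\cup\cdots\cup B_{m-1}\cup C\bigr|\;<\;m\cdot\frac{c}{2m}n\;=\;\frac{c}{2}n\;\le\;\frac{|A|}{2}, \]
so $A\setminus(B_{1}\cup\cdots\cup B_{m-1}\cup C)$ is nonempty and any vertex in it is the desired $a$.

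The three remaining statements are handled identically with a different choice of at most $m$ bad sets: for $b$ exclude $\{\,x\in A:S_{1}\to x\,\}$ together with $\{\,x\in A:x\to S_{i}\,\}$ for $i=2,\ldots,m$; for $g$ exclude $\{\,x\in A:S_{i}\to x\,\}$ for every $i$; for $v$ exclude $\{\,x\in A:x\to S_{i}\,\}$ for every $i$. In each case every bad set has size below $\frac{c}{2m}n$ by Lemma \ref{f}, so their union misses at least $\frac{|A|}{2}$ vertices of $A$ and the witnessing vertex is extracted as above. There is no real obstacle beyond bookkeeping: the hypothesis $\epsilon<\log_{c/2m}(1-f_{i})$ is chosen precisely so that the constant $\frac{c}{2m}$ -- small enough for Lemma \ref{f} to apply after dividing the mass of $A$ among $m$ bad sets -- still leaves a positive fraction of $A$ outside the union.
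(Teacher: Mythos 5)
Your proof is correct and takes essentially the same approach as the paper: you define the same $m$ ``bad'' subsets of $A$ (complete from $S_{i}$ for the appropriate indices, complete to $S_{i}$ for the others), bound each by $\frac{c}{2m}n$ via Lemma \ref{f} using the hypothesis $\epsilon<\log_{c/2m}(1-f_{i})$, and apply a union bound to find a surviving vertex. The only cosmetic difference is that the paper works with the threshold $\frac{|A|}{2m}$ directly rather than $\frac{c}{2m}n$, which amounts to the same thing since $|A|\ge cn$.
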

\begin{proof}
We will prove only the first statement  because the rest can be proved analogously.
Let $A_{i} \subseteq A$ such that $A_{i}$ is complete from $S_{i}$ for $i = 1,...,m-1$ and complete to $S_{i}$ for $i=m$. Let $j\in [m]$. If $\mid$$A_{j}$$\mid $ $\geq \frac{\mid A \mid}{2m}\geq \frac{c}{2m}n$, then this will contradicts Lemma \ref{f} since $\mid$$S_{j}$$\mid$ $ \geq f_{j}tr(T)$ and $\epsilon < log_{\frac{c}{2m}}(1-f_{j})$. Then for all $ i \in [m]$, $\mid$$A_{i}$$\mid$ $ < \frac{\mid A \mid}{2m}$. Let $A^{*} = A\backslash (\bigcup_{i=1}^{m}A_{i})$, then $\mid$$A^{*}$$\mid$ $ > $ $\mid $$A$$ \mid-m.\frac{\mid A \mid}{2m} \geq \frac{\mid A \mid}{2}$. Then $A^{*} \neq \phi$. Fix $a\in A^{*}$. So there exist vertices $s_{1},...,s_{m}$, such that $s_{i}\in S_{i}$ for $i=1,...,m$, and $\lbrace s_{1},...,s_{m-1} \rbrace \leftarrow \lbrace a \rbrace \leftarrow \lbrace s_{m} \rbrace$.  $\hfill{\square}$ 
\end{proof}
\begin{lemma} \cite{polll} \label{b} Let $A_{1},A_{2}$ be two disjoint sets such that $d(A_{1},A_{2}) \geq 1-\lambda$ and let $0 < \eta_{1},\eta_{2} \leq 1$. Let $\widehat{\lambda} = \frac{\lambda}{\eta_{1}\eta_{2}}$. Let $X \subseteq A_{1}, Y \subseteq A_{2}$ be such that $\mid$$X$$\mid$ $\geq \eta_{1} \mid$$A_{1}$$\mid$ and $\mid$$Y$$\mid$ $\geq \eta_{2} \mid$$A_{2}$$\mid$. Then $d(X,Y) \geq 1-\widehat{\lambda}$. 
\end{lemma}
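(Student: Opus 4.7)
The plan is to control the density by counting reverse arcs. In any tournament, for two disjoint vertex sets $U,V$ we have the identity $e_{U,V}+e_{V,U}=|U||V|$, so $d(U,V)+d(V,U)=1$. Hence the hypothesis $d(A_{1},A_{2})\geq 1-\lambda$ translates into an \emph{absolute} bound on the number of reverse arcs, namely $e_{A_{2},A_{1}}\leq \lambda|A_{1}||A_{2}|$. This reformulation is the important preliminary step, because absolute counts (unlike densities) behave monotonically under restriction to subsets.

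The key step is then a one-line monotonicity observation. Since $X\subseteq A_{1}$ and $Y\subseteq A_{2}$, every reverse arc of $Y\times X$ is a reverse arc of $A_{2}\times A_{1}$, whence $e_{Y,X}\leq e_{A_{2},A_{1}}\leq \lambda|A_{1}||A_{2}|$. I would then normalize by $|X||Y|$ and apply the size assumptions $|X|\geq \eta_{1}|A_{1}|$ and $|Y|\geq \eta_{2}|A_{2}|$ to obtain
$$d(Y,X)=\frac{e_{Y,X}}{|X||Y|}\leq \frac{\lambda|A_{1}||A_{2}|}{\eta_{1}\eta_{2}\,|A_{1}||A_{2}|}=\widehat{\lambda}.$$
Combining this with $d(X,Y)+d(Y,X)=1$ yields $d(X,Y)\geq 1-\widehat{\lambda}$, which is the desired conclusion.

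There is no real obstacle in this argument; the only minor subtlety is the decision to pass through the \emph{count} $e_{Y,X}$ rather than trying to bound the density $d(X,Y)$ directly. A rare reverse arc can have arbitrarily large relative weight inside a tiny subset, so densities do not restrict well on their own; but the number of reverse arcs can only decrease when one restricts, and the size hypotheses control exactly how much the denominator $|X||Y|$ can shrink relative to $|A_{1}||A_{2}|$. The factor $1/(\eta_{1}\eta_{2})$ in $\widehat{\lambda}$ is precisely this worst-case shrinkage.
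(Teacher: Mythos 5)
Your proof is correct; the paper itself gives no proof of this lemma (it is quoted from reference [2]), and your argument --- converting the density hypothesis into an absolute bound on the number of reverse arcs, using monotonicity of that count under restriction to $Y\times X$, and renormalizing by $|X||Y|\geq \eta_{1}\eta_{2}|A_{1}||A_{2}|$ --- is the standard one. The only implicit assumption worth noting is that $d(U,V)+d(V,U)=1$ uses the fact that the ambient digraph is a tournament (and that $X,Y$ are nonempty), which holds in every application in this paper.
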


The following is introduced in \cite{bnmm}.\\
Let $ c > 0, 0 < \lambda < 1 $ be constants, and let $w$ be a $ \lbrace 0,1 \rbrace$-vector of length $ \mid $$w$$ \mid $. Let $T$ be a tournament with $ \mid $$T$$ \mid$ $ = n. $ A sequence of disjoint subsets $ \chi = (S_{1}, S_{2},..., S_{\mid w \mid}) $ of $V(T)$ is a \textit{smooth $ (c,\lambda, w)$-structure} if:\\
$\bullet$ whenever $ w_{i} = 0 $ we have $ \mid $$S_{i}$$ \mid$ $ \geq cn $ (we say that $ S_{i} $ is a \textit{linear set}).\\
$\bullet$ whenever $ w_{i} = 1 $ the tournament $T$$\mid$$ S_{i} $ is transitive and $ \mid $$S_{i}$$ \mid$ $ \geq c.tr(T) $ (we say that $ S_{i} $ is a \textit{transitive set}).\\
$\bullet$ $ d(\lbrace v \rbrace, S_{j}) \geq 1 - \lambda $ for $v \in S_{i} $ and $ d(S_{i}, \lbrace v \rbrace) \geq 1 - \lambda $ for $v \in S_{j}, i < j $ (we say that $\chi$ is \textit{smooth}).
\begin{theorem} \cite{bnmm} \label{i}
Let $S$ be a tournament, let $w$ be a $ \lbrace 0,1 \rbrace $-vector, and let $ 0 < \lambda_{0} < \frac{1}{2} $ be a constant. Then there exist $ \epsilon_{0}, c_{0} > 0 $ such that for every $ 0 < \epsilon < \epsilon_{0} $, every $ S$-free $ \epsilon $-critical tournament contains a smooth $ (c_{0}, \lambda_{0},w)$-structure.
\end{theorem}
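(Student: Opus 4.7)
The plan is to prove the statement by induction on the length $|w|$. The base case $|w|=1$ is immediate because the smoothness condition is vacuous for a single set: when $w_1=0$ take $S_1=V(T)$, and when $w_1=1$ take $S_1$ to be a maximum transitive subtournament of $T$, of size $tr(T)\geq c_0\cdot tr(T)$ for any $c_0\leq 1$.

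For the inductive step, let $w$ have length $k$, let $w'$ denote $w$ with its last coordinate removed, and invoke the inductive hypothesis with a strictly smaller smoothness parameter $\lambda'\ll\lambda_0$; this yields a smooth $(c_1,\lambda',w')$-structure $(S_1,\ldots,S_{k-1})$. I would then construct $S_k$ on the right end from $U:=V(T)\setminus\bigcup_{i<k}S_i$, working with $G=\{v\in U:d(S_i,\{v\})\geq 1-\lambda_0/2\text{ for every }i<k\}$. If I can show $|G|\geq c_0 n$, I set $S_k=G$ when $w_k=0$, and for $w_k=1$ I take $S_k$ to be a maximum transitive subtournament of $T|G$; since $T|G$ is a proper subtournament, $\epsilon$-criticality gives $tr(T|G)\geq |G|^\epsilon$, which exceeds $c_0\cdot tr(T)$ for $\epsilon$ small enough. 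To promote the resulting average bound $d(S_i,S_k)\geq 1-\lambda_0/2$ into the per-vertex smoothness $d(\{u\},S_k)\geq 1-\lambda_0$ required by the definition, I would finally apply a Markov-style pruning to each $S_i$, deleting the small fraction of vertices that fail the bound and absorbing the size loss into $c_0$.

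The crux is to show $|G|\geq c_0 n$. Writing $U\setminus G=\bigcup_{i<k}B_i$ with $B_i=\{v\in U:d(S_i,\{v\})<1-\lambda_0/2\}$, suppose for contradiction that $|U\setminus G|$ is linear in $n$. Then pigeonhole produces some $|B_{i^*}|$ linear in $n$, and by averaging $d(B_{i^*},S_{i^*})>\lambda_0/2$. Combining this reversed density with the existing $(1-\lambda')$-smoothness of the inner structure via Lemma~\ref{b}, and invoking Lemma~\ref{f} or Lemma~\ref{v}, I would extract a configuration (e.g.\ a linear set complete from a large transitive set, or a matching of the wrong orientation) contradicting $\epsilon$-criticality. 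The main obstacle I expect is the parameter bookkeeping across the $|w|$ inductive layers: each layer degrades $\lambda$ (through Lemma~\ref{b}) and $c$ (through pigeonhole and the Markov cleanup), so $\lambda'$ must be chosen as a tiny fraction of $\lambda_0$ and $c_1$ much larger than $c_0$, with the final $(\epsilon_0,c_0)$ calibrated to absorb all accumulated losses uniformly in $k$. Notably, the $S$-free hypothesis is not used directly; the proof is essentially a structural statement about $\epsilon$-critical tournaments, with $S$ only providing the universe of tournaments for which the assertion is non-vacuous.
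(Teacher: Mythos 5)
First, a point of order: the paper does not prove this statement — Theorem \ref{i} is imported from \cite{bnmm} (and ultimately rests on \cite{polll}) — so there is no in-paper proof to compare against, and your attempt must stand on its own. It does not: the crux of your inductive step, the claim $|G|\geq c_0 n$, is both unproved and, as set up, false.

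Your argument for that claim is to assume $|U\setminus G|$ is linear, pigeonhole to get a linear-size $B_{i^*}$ with $d(B_{i^*},S_{i^*})\geq \lambda_0/2$, and then ``extract a configuration contradicting $\epsilon$-criticality.'' But two disjoint linear-size subsets of an $\epsilon$-critical tournament with density around $\lambda_0/2$ (or $1/2$) between them in either direction are in no tension whatsoever with criticality, and none of the available lemmas says otherwise. Lemma \ref{f} only excludes a linear set being \emph{complete} to or from a transitive set of size $\geq f\cdot tr(T)$; from a density of $\lambda_0/2$ you can extract at best a single vertex of $S_{i^*}$ dominated by a linear piece of $B_{i^*}$, which is harmless. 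Lemma \ref{v} points the wrong way entirely: it \emph{guarantees} a linear-size backward matching between any two linear sets, so ``a matching of the wrong orientation'' can never yield a contradiction. Worse, the inductive hypothesis hands you \emph{some} smooth $(c_1,\lambda',w')$-structure with no control over how the leftover vertices relate to it, so the conclusion you need is not even true: if, say, $V(T)$ splits as $U_0\cup V_0$ with $U_0$ mostly beating $V_0$ and the $(k-1)$-set structure happens to sit inside $V_0$, then every $v\in U_0$ has $d(S_i,\{v\})$ small and $G$ is essentially empty. A greedy left-to-right extension cannot work; the known proof instead first establishes — by a separate and genuinely harder argument that exploits $tr(T)<n^{\epsilon}$ — that an $\epsilon$-critical tournament contains many pairwise disjoint linear sets $A_1,\dots,A_N$ with $d(A_i,A_j)\geq 1-\lambda$ for all $i<j$, and only then carves the transitive sets out of selected $A_i$'s and re-cleans with Lemma \ref{b}. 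Producing even two such sets is the real content of the theorem, and nothing in your outline supplies it. (Your peripheral steps — the base case, the size estimate $tr(T|G)\geq|G|^{\epsilon}>c_0\,tr(T)$, and the final Markov pruning — are fine, and your remark that $S$-freeness is not used essentially is consistent with how the result is proved in the literature, but none of this rescues the core step.)
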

\begin{corollary}\cite{sss} \label{c}
Let $S$ be a tournament, let $w$ be a $ \lbrace 0,1 \rbrace $-vector, and let $m \in \mathbb{N}^{*}$, $ 0 < \lambda_{1} < \frac{1}{2} $ be constants. Then there exist $ \epsilon_{1}, c_{1} > 0 $ such that for every $ 0 < \epsilon < \epsilon_{1} $, every $ S$-free $ \epsilon $-critical tournament contains a smooth $ (c_{1}, \lambda_{1},w)$-structure $(S_{1},...,S_{\mid w \mid})$ such that for all $ 1 \leq j \leq$ $ \mid$$ w$$ \mid$, $\mid$$S_{j}$$\mid$ is divisible by $m$.
\end{corollary}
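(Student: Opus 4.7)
The plan is to derive the corollary directly from Theorem \ref{i} by applying the theorem with a slightly sharpened $\lambda$ parameter, then truncating each set of the resulting structure down to the largest multiple of $m$ not exceeding its size, and finally verifying that neither the size bounds nor the smoothness conditions are damaged beyond the tolerances set by $c_{1}$ and $\lambda_{1}$.

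Fix $\lambda_{0}$ with $0 < \lambda_{0} < \lambda_{1}$ (say $\lambda_{0} = \lambda_{1}/2$) and apply Theorem \ref{i} with parameters $\lambda_{0}$ and $w$ to obtain constants $\epsilon_{0}, c_{0} > 0$. Given an $S$-free $\epsilon$-critical tournament $T$ with $\epsilon < \epsilon_{0}$, produce a smooth $(c_{0}, \lambda_{0}, w)$-structure $(S_{1}, \ldots, S_{|w|})$. For each $j$, let $0 \leq r_{j} < m$ be the residue of $|S_{j}|$ modulo $m$ and choose $S_{j}' \subseteq S_{j}$ by removing $r_{j}$ arbitrary vertices; then $|S_{j}'|$ is divisible by $m$. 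The marked sets remain transitive, since every subset of a transitive tournament is transitive. Set $c_{1} := c_{0}/2$.

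For the size condition, $|S_{j}'| \geq |S_{j}| - (m-1)$ yields $|S_{j}'| \geq c_{1} n$ whenever $w_{j} = 0$ and $n \geq 2(m-1)/c_{0}$, and $|S_{j}'| \geq c_{1} \, tr(T)$ whenever $w_{j} = 1$ and $tr(T) \geq 2(m-1)/c_{0}$; by Lemma \ref{h}, the latter bound on $tr(T)$ follows from $n$ being sufficiently large. For the smoothness condition, fix $i < j$ and $v \in S_{i}'$; applying Lemma \ref{b} with $A_{1} = \{v\}$, $A_{2} = S_{j}$, $X = \{v\}$, $Y = S_{j}'$, $\eta_{1} = 1$, $\eta_{2} = |S_{j}'|/|S_{j}|$ gives
\[ d(\{v\}, S_{j}') \;\geq\; 1 - \frac{\lambda_{0} |S_{j}|}{|S_{j}'|}, \]
and using $|S_{j}'| \geq |S_{j}| - (m-1)$ the right-hand side is at least $1 - \lambda_{1}$ once $|S_{j}| \geq \lambda_{1}(m-1)/(\lambda_{1} - \lambda_{0})$. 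The symmetric inequality $d(S_{i}', \{v\}) \geq 1 - \lambda_{1}$ for $v \in S_{j}'$ is obtained identically.

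All of these constraints collapse to a single hypothesis $|T| \geq N$ for a constant $N = N(m, w, \lambda_{1})$; by Lemma \ref{e} there exists $\epsilon_{1} \leq \epsilon_{0}$ such that every $\epsilon$-critical tournament with $\epsilon < \epsilon_{1}$ satisfies $|T| \geq N$, and then $\epsilon_{1}$ together with $c_{1}$ proves the corollary. The only real observation needed is that everything reduces to ``$n$ is large,'' which $\epsilon$-criticality supplies via Lemma \ref{e}; the rest is routine accounting of the $O(m)$ vertices lost from each of the $|w|$ sets during truncation, and no genuine obstacle is expected.
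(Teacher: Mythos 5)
Your proposal is correct and follows essentially the same route as the paper: apply Theorem \ref{i} with $\lambda_{0}=\lambda_{1}/2$, truncate each set to a largest subset of size divisible by $m$, observe that only $O(m)$ vertices are lost so each truncated set retains at least half of the original (once $n$, hence $tr(T)$, is large enough via Lemmas \ref{h} and \ref{e}), and restore smoothness with Lemma \ref{b}. The only difference is cosmetic: the paper uniformly uses the ratio bound $|S_{j}|\geq|A_{j}|/2$ to invoke Lemma \ref{b} with $\eta_{2}=1/2$, whereas you track the exact ratio $|S_{j}'|/|S_{j}|$; both yield the same constants.
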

\begin{proof}
Let $\lambda_{0} = \frac{\lambda_{1}}{2}$. By Theorem \ref{i}, there exist $ \epsilon_{0}, c_{0} > 0 $ such that for every $ 0 < \epsilon < \epsilon_{0} $, every $ S$-free $ \epsilon $-critical tournament contains a smooth $ (c_{0}, \lambda_{0},w)$-structure. Denote this structure by $(A_{1},...,A_{\mid w \mid})$. Now for all $j\in $ $ [\mid$$ w$$ \mid ]$, take an arbitrary subset $S_{j}$ of $A_{j}$, such that $\mid$$S_{j}$$\mid$ is divisible by $m$ and $\mid$$S_{j}$$\mid$ is of maximum size. Notice that for all $ j \in$ $[\mid$$ w$$ \mid ]$, $\mid$$S_{j}$$\mid$ $\geq$ $ \mid$$A_{j}$$\mid -$ $ m$. Taking $\epsilon_{0}$ small enough we may assume that $tr(T) \geq \frac{2m}{c_{0}}$ by Lemmas \ref{h} and \ref{e}. For all $j \in$ $ [\mid$$ w$$ \mid ]$ with $w(j) = 1$, we have: $\mid$$A_{j}$$\mid$ $ \geq c_{0}tr(T) \geq c_{0}\frac{2m}{c_{0}} = 2m$. Then $\mid$$S_{j}$$\mid$ $\geq$ $ \mid$$A_{j}$$\mid -$ $ m \geq$ $ \mid$$A_{j}$$\mid - $ $\frac{\mid A_{j}\mid}{2} = \frac{\mid A_{j}\mid}{2}$. And for all $j \in$ $ [\mid$$ w$$ \mid ]$ with $w(j) = 0$, we have: $\mid$$A_{j}$$\mid$ $ \geq c_{0}n \geq c_{0}tr(T) \geq 2m$. Then $\mid$$S_{j}$$\mid$ $\geq$ $ \frac{\mid A_{j}\mid}{2}$.
Now Lemma \ref{b} implies that $(S_{1},...,S_{\mid w \mid})$ is a smooth $(\frac{c_{0}}{2}, 2\lambda_{0},w)$-structure of $T$. Thus taking $c_{1} = \frac{c_{0}}{2}$, completes the proof. $\hfill{\square}$  
\end{proof}\vspace{3mm} 

Let $(S_{1},...,S_{\mid w \mid})$ be a smooth $(c,\lambda ,w)$-structure of a tournament $T$, let $i \in \lbrace 1,...,\mid$$w$$\mid \rbrace$, and let $v \in S_{i}$. For $j\in \lbrace 1,2,...,\mid$$w$$\mid \rbrace \backslash \lbrace i \rbrace$, denote by $S_{j,v}$ the set of the vertices of $S_{j}$ adjacent from $v$ for $j > i$ and adjacent to $v$ for $j<i$.
\begin{lemma} \label{g}\cite{sss} Let $0<\lambda<1$, $0<\gamma \leq 1$ be constants and let $w$ be a $\lbrace 0,1 \rbrace$$-$vector. Let $(S_{1},...,S_{\mid w \mid})$ be a smooth $(c,\lambda ,w)$-structure of a tournament $T$ for some $c>0$. Let $j\in \lbrace 1,...,\mid$$w$$\mid \rbrace$. Let $S_{j}^{*}\subseteq S_{j}$ such that $\mid$$S_{j}^{*}$$\mid$ $\geq \gamma \mid$$S_{j}$$\mid$, and let $A= \lbrace x_{1},...,x_{k} \rbrace \subseteq \displaystyle{\bigcup_{i\neq j}S_{i}}$ for some positive integer $k$. Then $\mid$$\displaystyle{\bigcap_{x\in A}S^{*}_{j,x}}$$\mid$ $\geq (1-k\frac{\lambda}{\gamma})\mid$$S_{j}^{*}$$\mid$. In particular $\mid$$\displaystyle{\bigcap_{x\in A}S_{j,x}}$$\mid$ $\geq (1-k\lambda)\mid$$S_{j}$$\mid$.
\end{lemma}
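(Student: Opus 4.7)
The approach is a straightforward union bound: for each $x \in A$ the smoothness hypothesis forces $S_{j,x}$ to cover most of $S_j$, so intersecting $k$ such sets can only remove a small $k\lambda/\gamma$ fraction of $S_j^*$. Concretely, fix $x \in A$ with $x \in S_i$, $i \neq j$. The smoothness condition gives $d(\{x\},S_j) \geq 1-\lambda$ when $i<j$ and $d(S_j,\{x\}) \geq 1-\lambda$ when $i>j$; by the definition of $S_{j,x}$, either inequality amounts to $|S_j \setminus S_{j,x}| \leq \lambda|S_j|$. To transfer this to $S_j^*$, I would apply Lemma \ref{b} with $\eta_1=1$ and $\eta_2=\gamma$ (or with the roles of $A_1, A_2$ swapped, according to whether $i<j$ or $i>j$). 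Since $|S_j^*| \geq \gamma|S_j|$, Lemma \ref{b} upgrades the density bound to $1-\lambda/\gamma$ on the pair consisting of $\{x\}$ and $S_j^*$, yielding $|S_j^* \setminus S_{j,x}^*| \leq (\lambda/\gamma)|S_j^*|$.

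A union bound over the $k$ elements of $A$ then finishes the argument:
$$\left|S_j^* \setminus \bigcap_{x \in A} S_{j,x}^*\right| \;=\; \left|\bigcup_{x \in A}\bigl(S_j^* \setminus S_{j,x}^*\bigr)\right| \;\leq\; \sum_{x\in A}|S_j^*\setminus S_{j,x}^*| \;\leq\; k\,\frac{\lambda}{\gamma}\,|S_j^*|,$$
which rearranges to $|\bigcap_{x\in A}S_{j,x}^*| \geq (1-k\lambda/\gamma)|S_j^*|$. The ``in particular'' statement is the special case $S_j^* = S_j$, where $\gamma=1$. There is no substantive obstacle here; the only point requiring a little attention is handling the two orientation cases ($i<j$ and $i>j$) uniformly when invoking smoothness and Lemma \ref{b}, but the two situations are symmetric and cause no real difficulty.
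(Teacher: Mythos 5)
Your proof is correct and is essentially the same as the paper's: you bound $|S_j^*\setminus S_{j,x}^*|$ for a single $x$ via Lemma \ref{b} exactly as the paper does, and your union bound over the $k$ elements of $A$ is just the unrolled form of the paper's induction on $k$ (whose inductive step is the inclusion–exclusion inequality $|X\cap Y|\geq |X|+|Y|-|S_j^*|$). No substantive difference.
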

\begin{proof}
The proof is by induction on $k$. without loss of generality assume that $x_{1} \in S_{i}$ and $j<i$. Since $\mid$$S_{j}^{*}$$\mid$ $\geq \gamma \mid$$S_{j}$$\mid$, then by Lemma \ref{b}, $d(S^{*}_{j},\lbrace x_{1}\rbrace) \geq 1-\frac{\lambda}{\gamma}$. So $1-\frac{\lambda}{\gamma} \leq d(S^{*}_{j},\lbrace x_{1}\rbrace) = \frac{\mid S^{*}_{j,x_{1}}\mid}{\mid S_{j}^{*}\mid}$. Then $\mid$$S^{*}_{j,x_{1}}$$\mid$ $\geq (1-\frac{\lambda}{\gamma})$$\mid$$S_{j}^{*}$$\mid$ and so true for $k=1$.
Suppose the statement is true for $k-1$.\\ $\mid$$\displaystyle{\bigcap_{x\in A}S^{*}_{j,x}}$$\mid$ $=\mid$$(\displaystyle{\bigcap_{x\in A\backslash \lbrace x_{1}\rbrace}S^{*}_{j,x}})\cap S^{*}_{j,x_{1}}$$\mid$ $= \mid$$\displaystyle{\bigcap_{x\in A\backslash \lbrace x_{1}\rbrace}S^{*}_{j,x}}$$\mid$ $+$ $\mid$$S^{*}_{j,x_{1}}$$\mid$ $- \mid$$(\displaystyle{\bigcap_{x\in A\backslash \lbrace x_{1}\rbrace}S^{*}_{j,x}})\cup S^{*}_{j,x_{1}}$$\mid$ $\geq (1-(k-1)\frac{\lambda}{\gamma})\mid$$S_{j}^{*}$$\mid$ $+$ $(1-\frac{\lambda}{\gamma})\mid$$S_{j}^{*}$$\mid$ $-$ $\mid$$S_{j}^{*}$$\mid$ $= (1-k\frac{\lambda}{\gamma})\mid$$S_{j}^{*}$$\mid$. $\hfill{\square}$       
\end{proof}\vspace{2mm}\\

Let $ \theta = (v_{1},...,v_{n}) $ be an ordering of the vertex set $V(D)$ of an $ n- $vertex digraph $D$.
An arc $ (v_{i},v_{j})\in E(D) $ is a \textit{backward arc of $D$ under} $ \theta $ if $ i > j $. For any two vertices $v_{i}, v_{j} $ with $i\neq j$, we say that $ v_{i} $ is \textit{before} $ v_{j} $ if $i<j$ and \textit{after} $ v_{j} $ otherwise. We say that a vertex $ v_{j} $ is \textit{between} two vertices $ v_{i},v_{k} $ under $ \theta = (v_{1},...,v_{n}) $ if $ i < j < k $ or $ k < j < i $. For a subset $N\subseteq V(D)$, we say that $v_{i}\in N$ is a \textit{left point of $N$ under $\theta$} if $i=$ min$\lbrace j: v_{j}\in N\rbrace$. And we say that $v_{i}\in N$ is a \textit{right point of $N$ under $\theta$} if $i=$ max$\lbrace j: v_{j}\in N\rbrace$.
 The graph of backward arcs under $ \theta $, denoted by $ B(D,\theta) $, is the undirected graph that has vertex set $V(D)$, and $ v_{i}v_{j} \in E(B(D,\theta)) $ if and only if $ (v_{i},v_{j}) $ or $ (v_{j},v_{i}) $ is a backward arc of $D$ under $ \theta $. A tournament $S$ on $n$ vertices with $V(S)= \lbrace u_{1},u_{2},...,u_{p}\rbrace$ is a \textit{right star} (resp. \textit{left star}) (resp. \textit{middle star}) if there exists an ordering $\theta^{*} = (u_{1},u_{2},...,u_{p})$ of its vertices such that the backward arcs of $S$ under $\theta^{*}$ are $(u_{p},u_{i})$ for all $i\in [p-1]$ (resp. $(u_{i},u_{1})$ for all $i\geq 2$) (resp. $(u_{i},u_{r})$ for all $i\geq r$ and $(u_{r},u_{i})$ for all $i\in [r-1]$, where $2\leq r\leq p-1$). In this case we write $S = \lbrace u_{1},u_{2},...,u_{p}\rbrace$ and we call $\theta^{*} = (u_{1},u_{2},...,u_{p})$ a \textit{right star ordering} (resp. \textit{left star ordering}) (resp. \textit{middle star ordering}) of $S$, $u_{p}$ (resp. $u_{1}$) (resp. $u_{r}$) the \textit{center of} $S$, and $u_{1},...,u_{p-1}$ (resp. $u_{2},...,u_{p}$) (resp. $u_{1},...,u_{r-1},u_{r+1},...,u_{p}$) the \textit{leaves of} $S$. A \textit{star} is a left star or a right star (note that a star is not a middle star, a star is either left or right). A \textit{star ordering} is a left star ordering or a right star ordering. Note that in the case $p=2$ we may choose arbitrarily any one of the two vertices to be the center of the star, and the other vertex is then considered to be the leaf.
   A \textit{star} $S=\lbrace v_{i_{1}},...,v_{i_{t}}\rbrace$ \textit{of $D$ under $\theta$} (where $i_{1}<...<i_{t}$) is the subdigraph of $D$ induced by $\lbrace v_{i_{1}},...,v_{i_{t}}\rbrace$ such that $S$ is a star and $S$ has the star ordering $ (v_{i_{1}},...,v_{i_{t}})$ under $\theta$ (i.e $(v_{i_{1}},...,v_{i_{t}})$ is the restriction of $\theta$ to $V(S)$ and $ (v_{i_{1}},...,v_{i_{t}})$ is a star ordering of $S$). A \textit{middle star} $S=\lbrace v_{i_{1}},...,v_{i_{t}}\rbrace$ \textit{of $D$ under $\theta$} (where $i_{1}<...<i_{t}$) is the subdigraph of $D$ induced by $\lbrace v_{i_{1}},...,v_{i_{t}}\rbrace$ such that $S$ is a middle star and $S$ has the middle star ordering $ (v_{i_{1}},...,v_{i_{t}})$ under $\theta$.\vspace{2.5mm}
   
   A tournament $T$ is a \textit{nebula} if there exists an ordering $\theta$ of its vertices such that $V(T)$ is the disjoint union of $V(Q_{1}),...,V(Q_{l}),X$ where $Q_{i}$ is either a star or a middle star of $T$ under $\theta$ for $i=1,...,l$, and for every $x\in X$, $\lbrace x \rbrace$ is a singleton component of $B(T,\theta)$  (note that there is no condition concerning the location of the centers of the stars and middle stars). We also say that $T$ \textit{is a \textit{nabula} under $\theta$}, and $\theta$ is called a \textit{nebula ordering of $T$}. 

A tournament $T$ is a \textit{galaxy} if there exists an ordering $\theta$ of its vertices such that $V(T)$ is the disjoint union of $V(S_{1}),...,V(S_{l}),X$ where $S_{1},...,S_{l}$ are the stars of $T$ under $\theta$, and for every $x\in X$, $\lbrace x \rbrace$ is a singleton component of $B(T,\theta)$, and no center of a star is between leaves of another star under $\theta$. We also say that $T$ \textit{is a \textit{galaxy} under $\theta$}. If $X=\phi$, we say that $T$ is a \textit{regular galaxy}.\vspace{2.5mm} 

 The condition concerning the type of the stars in galaxies (that is middle stars are not allowed) and that no center of a star appears in the ordering between leaves of another star is necessary to make it possible looking for centers of the stars in linear sets and looking for leaves of the same star in the same transitive set, and so making the proof of the following theorem works.
\begin{theorem}\cite{polll}
Every galaxy satisfies the Erd\"{o}s-Hajnal conjecture.
\end{theorem}
\section{Asterisms} \label{q}
\hspace{3.5mm} There exist infinitely many tournaments with no nebula ordering. That motivates us to work on new configuration of backward arcs. In this paper we studied the structure of the graph of backward arcs under several orderings of the same tournament, and we introduce the technique of \textit{"Corresponding Digraphs"} to prove the conjecture for a new infinite family of tournaments $-$ the family of so-called asterisms. First we will define formally the class of asterisms and the notion of corresponding  digraphs. Then we will give an overview that explains main steps of the proof, why the known ideas failed to succeed in this class, and how the new technique works. And finally we will prove the conjecture for asterisms. \\Before defining formally the class of asterisms, we would like to describe roughly this infinite class of tournaments. In asterisms we treat the existence of new substructures called $\beta$-asteroids (to be defined below), and we allow for the centers of stars to appear between the leaves of another stars on three vertices under the condition that these stars on three vertices must be subtournaments of $\beta$-asteroids (see Figure \ref{fig:generalizedasteroid}). What is special about asterisms is that infinitely many asterisms  has no nebula ordering (see Section \ref{secgalaxywithspiders} for the definition of nebula ordering). Instead every asterism has a special ordering for which  new configuration of backward arcs appears under this ordering (different than stars), and in some portions, centers of stars are allowed  to appear between leaves of another stars.     
\subsection{Definitions and tools}
\hspace{3.5mm} The family of asterisms contains infinitely many tournaments that are neither galaxies nor constellations (constellations are defined in \cite{kg}). In asterisms we deal with new substructures called asteroids, that are different than stars and middle stars. We will start by explaining how we construct the first infinite family of tournaments $-$ the family of so-called asterisms.\\
An \textit{asteroid} $\mathcal{A} = \lbrace 1,2,3,4,5 \rbrace$ is a five-vertex tournament with $V(\mathcal{A}) = \lbrace 1,2,3,4,5 \rbrace$ and $E(\mathcal{A}) = \lbrace (1,2),(1,3),\\(4,1),(1,5),(2,3),(2,4),(2,5),(3,4),(5,3),(5,4) \rbrace$. \\
The following are four special tournaments obtained from $\mathcal{A}$.  The \textit{left $\beta_{1}$-asteroid} is the tournament obtained from $\mathcal{A}$ by adding two extra vertices $6$ and $7$ and making $6$ adjacent from $2$ and $7$, and making $7$ adjacent to $5$ and $6$. The \textit{right $\beta_{1}$-asteroid} is the tournament obtained from $\mathcal{A}$ by adding two extra vertices $6$ and $7$, and making $6$ adjacent to $3$ and $7$, and making $7$ adjacent from $5$ and $6$. The \textit{left $\beta_{2}$-asteroid} is the tournament obtained from $\mathcal{A}$ by adding two extra vertices $6$ and $7$ and making $6$ adjacent from $2$ and $7$, and making $7$ adjacent to $6$. The \textit{right $\beta_{2}$-asteroid} is the tournament obtained from $\mathcal{A}$ by adding two extra vertices $6$ and $7$, and making $6$ adjacent to $3$ and $7$, and making $7$ adjacent from $6$.
 In Figure \ref{fig:generalizedasteroid} we define some crucial orderings of the vertex set of the above four tournaments.
 \begin{figure}[h]
	\centering
	\includegraphics[width=1\linewidth]{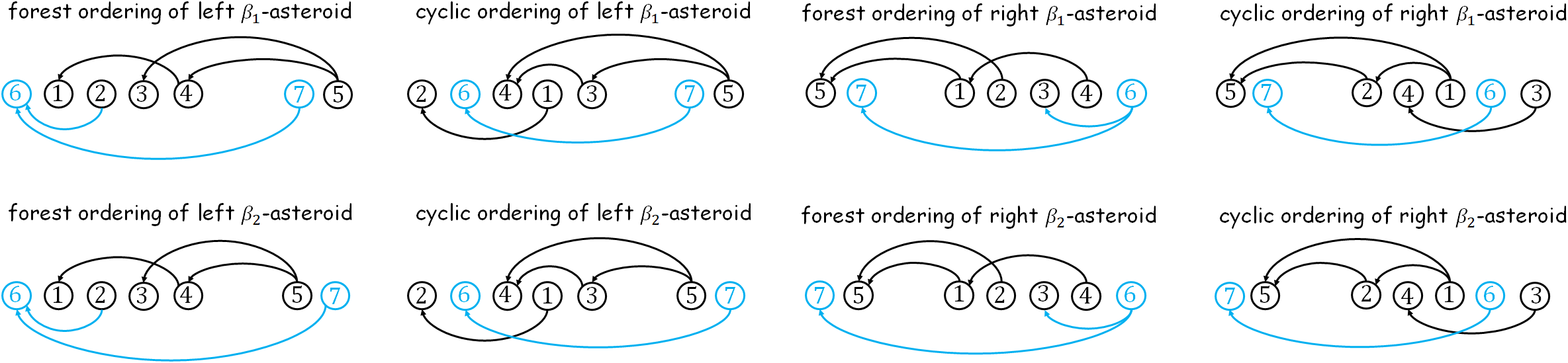}
	\caption{Crucial orderings of the vertices of left and right $\beta_{1}$-asteroid, left and right $\beta_{2}$-asteroid. All the nondrawn arcs are forward.}
	\label{fig:generalizedasteroid}
\end{figure}

Let $ \theta = (v_{1},...,v_{n}) $ be an ordering of the vertex set $V(T)$ of an $ n$-vertex tournament $T$. A \textit{left $\beta_{1}$-asteroid} (resp. \textit{left $\beta_{2}$-asteroid}) $\mathcal{A}^{\beta} := \lbrace v_{i_{1}},v_{i_{2}},v_{i_{3}},v_{i_{4}},v_{i_{5}},v_{i_{6}},v_{i_{7}} \rbrace$ \textit{of $T$ under} $\theta$ (where $i_{1}<...<i_{7}$) is an induced subtournament of $T$ with vertex set $\lbrace v_{i_{1}},v_{i_{2}},v_{i_{3}},v_{i_{4}},v_{i_{5}},v_{i_{6}},v_{i_{7}} \rbrace$, such that $\mathcal{A}^{\beta}$ is a left $\beta_{1}$-asteroid (resp. left $\beta_{2}$-asteroid) and has the forest ordering $(v_{i_{1}},v_{i_{2}},v_{i_{3}},v_{i_{4}},v_{i_{5}},v_{i_{6}},v_{i_{7}})$ under $\theta$, $v_{i_{1}},...,v_{i_{5}}$ are consecutive under $\theta$, and $v_{i_{6}},v_{i_{7}}$ are consecutive under $\theta$. A \textit{right $\beta_{1}$-asteroid} (resp. \textit{right $\beta_{2}$-asteroid})  $\mathcal{A}^{\beta} := \lbrace v_{i_{1}},v_{i_{2}},v_{i_{3}},v_{i_{4}},v_{i_{5}},v_{i_{6}},v_{i_{7}} \rbrace$ \textit{of $T$ under} $\theta$ (where $i_{1}<...<i_{7}$) is an induced subtournament of $T$ with vertex set $\lbrace v_{i_{1}},v_{i_{2}},v_{i_{3}},v_{i_{4}},v_{i_{5}},v_{i_{6}},v_{i_{7}} \rbrace$, such that $\mathcal{A}^{\beta}$ is a right $\beta_{1}$-asteroid (resp. right $\beta_{2}$-asteroid) and has the forest ordering $(v_{i_{1}},v_{i_{2}},v_{i_{3}},v_{i_{4}},v_{i_{5}},v_{i_{6}},v_{i_{7}})$ under $\theta$, $v_{i_{1}},v_{i_{2}}$ are consecutive under $\theta$, and $v_{i_{3}},...,v_{i_{7}}$ are consecutive under $\theta$. A \textit{$\beta_{1}$-asteroid of $T$ under $\theta$} is a right or left $\beta_{1}$-asteroid of $T$ under $\theta$. A \textit{$\beta_{2}$-asteroid of $T$ under $\theta$} is a right or left $\beta_{2}$-asteroid of $T$ under $\theta$. A \textit{$\beta$-asteroid of $T$ under $\theta$} is a $\beta_{1}$-asteroid or $\beta_{2}$-asteroid of $T$ under $\theta$.\vspace{1mm} 

A tournament $T$ is an \textit{asterism} if there exists an ordering $ \theta $ of its vertices such that $V(T)$ is the disjoint union of $V(\mathcal{A}^{\beta}_{1}),...,V(\mathcal{A}^{\beta}_{l}),X$ where $\mathcal{A}^{\beta}_{1},...,\mathcal{A}^{\beta}_{l}$ are the $\beta$-asteroids of $T$ under $\theta$, and $T$$\mid$$X$ is a galaxy under $\overline{\theta}$ ($\overline{\theta}$ is the restriction of $\theta$ to $X$), and no vertex of a $\beta$-asteroid appears in the ordering $\theta$ between leaves of a star of $T$$\mid$$X$ under $\overline{\theta}$. We also say that $T$ \textit{is an \textit{asterism} under $\theta$} and $\theta$ is an \textit{asterism ordering of $T$}. If $T$$\mid$$X$ is a regular galaxy under $\overline{\theta}$, then $T$ is called a \textit{regular asterism}. \vspace{3mm}

In what follows we give examples of tournaments having $\beta$-asteroids (that is a $4$-path with a star) and centers of stars between leaves of another stars, that belongs to the family of so-called asterisms and are neither galaxies nor constellations (also not galaxies with spiders):\vspace{2.5mm}\\
-  The nine-vertex tournament $A_1$, such that $\{(8,5),(6,2),(5,2),(4,1),(9,3),(9,7)\}$ is the set of backward arcs of $A_1$ under the ordering $\gamma_1 =(1,...,9)$ of its vertices. Notice that $A_1$ is formed by merging the vertex disjoint right $\beta_{1}$-asteroid  $\{2,3,5,...,9\}$ and the right star  $\{1,4\}$ of $A_1$ under $\gamma_{1}$. In other words the connected components of $B(A_1,\gamma_{1})$ are the two right stars $\{3,7,9\}$ and $\{1,4\}$, and the $4$-path with vertex set $\{2,5,6,8\}$. Under $\gamma_{1}$, a new configuration of connected components appears (the $4$-path), and the center $4$ of the right star $\{1,4\}$ appears between the leaves of the right star $\{3,7,9\}$ (see Figure \ref{fig:asterismexamples}).   \vspace{1.5mm}\\
- $A_2$ is the $12$-vertex tournament, with $\{(11,4),(11,5),(5,2),(3,1),(12,1),(8,6),(10,6),(9,7)\}$ is the set of backward arcs of $A_2$ under the ordering $\gamma_{2}=(1,...,12)$ of its vertices. Under $\gamma_{2}$, $A_2$ is the disjoint union of the left $\beta_{2}$-asteroid  $\{1,...,5,11,12\}$ and the two left stars  $\{6,8,10\}$ and $\{7,9\}$. So the $4$-path with vertex set $\{2,4,5,11\}$ is one of the connected components of $B(A_2,\gamma_{2})$, and clearly $6$ and $7$ are centers of stars $\{6,8,10\}$ and $\{7,9\}$ respectively, appearing between the leaves $3$ and $12$ of the star $\{1,3,12\}$ (see Figure \ref{fig:asterismexamples}). \vspace{1.5mm}\\
- $A_3$ is the $16$-vertex tournament, with $\{(10,2),(11,2),(13,10),(16,7),(16,8),(8,5),(6,4),(15,4),(14,1),(14,$ $12),(9,3)\}$ is the set of backward arcs of $A_3$ under the ordering $\gamma_{3}=(1,...,16)$ of its vertices. Clearly the connected components of $B(A_3,\gamma_{3})$ are: The two four-paths $P_1$ and $P_2$ with vertex sets $\{2,10,11,13\}$ and $\{5,7,8,16\}$ respectively, the right star $\mathcal{S}_1:=\{1,12,14\}$, and the two left stars $\mathcal{S}_2:=\{3,9\}$ and $\mathcal{S}_3:=\{4,6,15\}$. One can clearly notice that the center of $\mathcal{S}_1$ is between the leaves of $\mathcal{S}_3$, and the centers of $\mathcal{S}_2$ and $\mathcal{S}_3$ are between the leaves of $\mathcal{S}_1$ (see Figure \ref{fig:asterismexamples}).\vspace{2mm}
     
For all $i\in[3]$, one can easily check that there exists no ordering of $V(A_i)$ such that the connected components of $A_i$ under this ordering are only stars.
 \begin{figure}[h]
	\centering
	\includegraphics[width=0.85\linewidth]{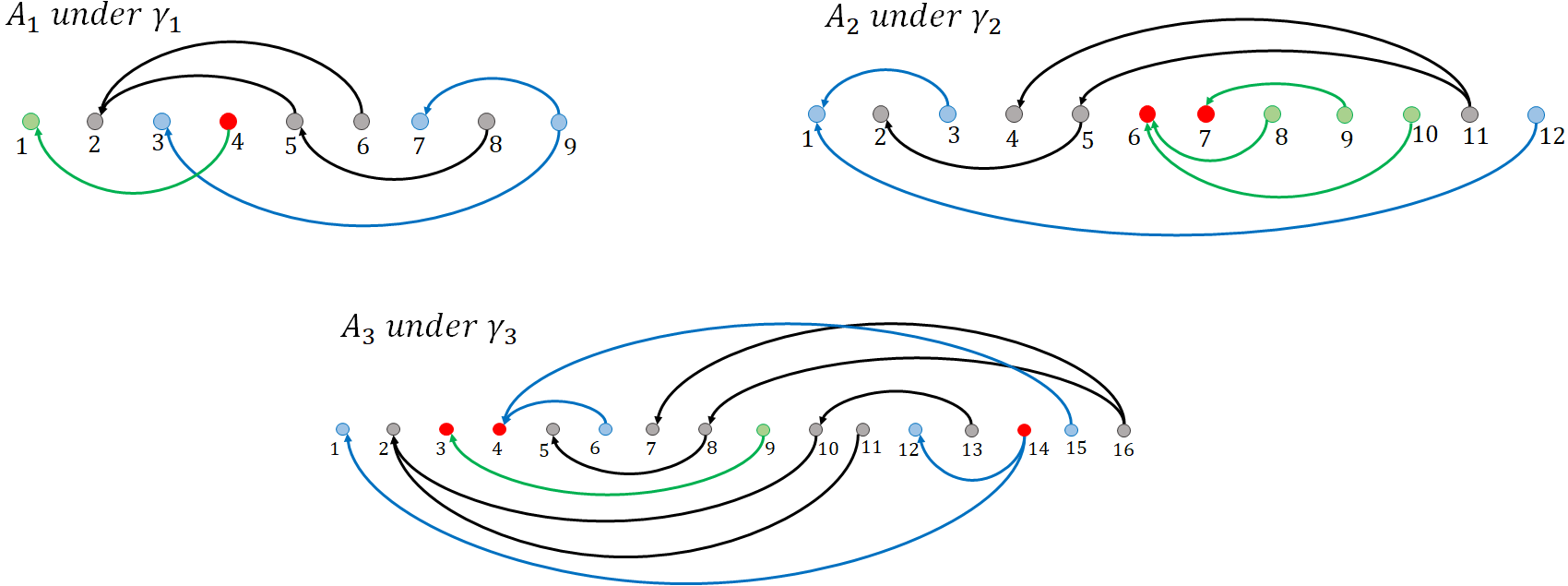}
	\caption{Asterisms drawn under their asterism ordering. The arcs corresponding to four-paths are colored in black, stars corresponding to $\beta$-asteroids are colored in blue, and other stars are colored in green. Centers of stars located between leaves of another stars are colored in red.  All the nondrawn arcs are forward.}
	\label{fig:asterismexamples}
\end{figure}

The main result in this section is:
\begin{theorem} \label{j} 
Every asterism satisfies EHC.
\end{theorem}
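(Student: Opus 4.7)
My plan is to prove Theorem \ref{j} by the contradiction argument standard in this area, following the template established by Berger--Choromanski--Chudnovsky and systematically exploited in Lemmas \ref{s}--\ref{g}. Fix an asterism $T^{*}$ with asterism ordering $\theta$, and decompose $V(T^{*})$ into its $\beta$-asteroids $\mathcal{A}_{1}^{\beta},\ldots,\mathcal{A}_{\ell}^{\beta}$ and the galaxy piece $T^{*}\mid X$ consisting of stars and singletons. Suppose for contradiction that $T^{*}$ does not satisfy EHC. Then for every sufficiently small $\epsilon>0$ there exists a $T^{*}$-free $\epsilon$-critical tournament $T$ on $n$ vertices with $tr(T)<n^{\epsilon}$. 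I will build an induced copy of $T^{*}$ inside $T$, contradicting $T^{*}$-freeness.

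The first step is to encode $T^{*}$ as a $\{0,1\}$-vector $w$, read from left to right under $\theta$. Each galaxy singleton contributes one linear coordinate ($w_{i}=0$); each star contributes one transitive coordinate (hosting the leaves) together with a linear coordinate on the appropriate side (hosting the center), reflecting whether the star is left or right; each $\beta$-asteroid contributes a block of seven linear coordinates positioned so that the first five are consecutive and the last two are consecutive (left case), or the first two and the last five are consecutive (right case), mirroring the ``consecutive under $\theta$'' constraints of the definition. Pick $\lambda_{1}<1/2$ small enough that all later invocations of Lemmas \ref{g}, \ref{s} and \ref{r} succeed with the chosen parameters. By Corollary \ref{c}, for all sufficiently small $\epsilon>0$, $T$ contains a smooth $(c_{1},\lambda_{1},w)$-structure $(S_{1},\ldots,S_{|w|})$.

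I then build the copy of $T^{*}$ in $T$ by processing $\theta$ from left to right, maintaining the invariant that every coordinate not yet fully used remains of linear size $\geq c'n$ or transitive size $\geq c'\,tr(T)$ for a positive constant $c'$ depending only on $|T^{*}|$. For each star of $T^{*}\mid X$, one application of Lemma \ref{g} restricts its transitive coordinate to the common in-/out-neighbourhood of all previously selected vertices, the required leaves are then picked in order, and finally the center is chosen from the neighbouring linear coordinate via Lemma \ref{r} to enforce all its backward arcs to the chosen leaves. Each singleton is treated identically with a single application of Lemma \ref{r}. For each $\beta$-asteroid, I select the seven vertices sequentially from the associated linear coordinates: at every step I first apply Lemma \ref{g} to shrink every still-unused coordinate to the common neighbourhood of all previously chosen vertices, and then invoke Lemma \ref{s} or Lemma \ref{r} to pick the next vertex so that it realises exactly the backward arcs of the asteroid incident to it. Forward arcs of $T^{*}$ are realised automatically because smoothness combined with Lemma \ref{g} guarantees that the common neighbourhoods involved in higher-indexed sets are still nonempty at each stage.

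The main obstacle is the $\beta$-asteroid step. Unlike a star, the backward-arc graph of the underlying asteroid $\mathcal{A}$ is not a star (it already contains $(4,1),(5,3),(5,4)$, with two backward edges sharing endpoint $5$), so a single application of Lemma \ref{r} will not produce all seven vertices at once; I must proceed vertex by vertex, each time shrinking the still-available linear sets by a bounded multiplicative factor via Lemma \ref{g}. The asterism hypotheses are crucial here in two ways: the ``consecutive'' constraints on the blocks of five and two vertices of each $\beta$-asteroid guarantee that Lemma \ref{s} (for pairs of backward arcs with shared endpoint, as in $(5,3),(5,4)$ or $(7,5),(7,6)$) and Lemma \ref{r} (for backward arcs incident to a single outside vertex) can be applied; and the requirement that no vertex of a $\beta$-asteroid lies between leaves of any star of $T^{*}\mid X$ ensures that the only backward arcs I need to realise are those internal to each star or each asteroid, never ``crossing'' ones, which is precisely what the smoothness condition combined with Lemmas \ref{s} and \ref{r} is designed to produce. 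Once all selections are made, the induced subtournament of $T$ on the chosen vertices is isomorphic to $T^{*}$, contradicting $T^{*}$-freeness and completing the proof.
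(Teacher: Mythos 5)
Your plan is a direct greedy embedding of the asterism $H$ into the $\epsilon$-critical tournament $T$, picking vertices one at a time from a smooth structure whose $\{0,1\}$-vector encodes each $\beta$-asteroid as seven linear coordinates, and then realising each backward arc with Lemmas~\ref{s}, \ref{r}, \ref{g} as you go. This is not what the paper does, and I believe the direct route has a genuine gap at the $\beta$-asteroid step.

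The paper never tries to embed $H$ itself. Instead it builds, from each $7$-vertex $\beta$-asteroid, a $13$-vertex \emph{mutant $\beta$-asteroid} $\widehat{\mathcal{A}^{\beta}_i}$: six auxiliary vertices $m_i,g_i,x_i,w_i,r_i,y_i$ are inserted at carefully chosen positions, three of the forward arcs of $\mathcal{A}^{\beta}_i$ (for the left $\beta_1$ case, $(v_{s_i+2},v_{q_i-1})$, $(v_{s_i+1},v_{q_i})$, $(v_{s_i+3},v_{s_i+4})$) are \emph{deleted}, and three extra backward arcs $m_ir_i$, $g_iw_i$, $x_iy_i$ are introduced. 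This yields a digraph $\widehat{H}$ whose $\{0,1\}$-vector mixes linear and transitive coordinates; Lemmas~\ref{k} and \ref{d} embed $\widehat{H}$ into $T$, and Corollary~\ref{n} then case-splits on the directions of the three arcs that were deliberately \emph{not} controlled to carve out, in $T$, one of the $2^{l}$ orderings of $H$ in $\Theta$.

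Here is why your direct approach fails. First, and most basically, with all seven $\beta$-asteroid vertices in linear coordinates you cannot apply Lemma~\ref{s} or Lemma~\ref{r} to force those vertices' backward arcs --- both lemmas require at least one transitive subtournament $S_i$ satisfying $|S_i|\ge f\cdot tr(T)$. The only tool for a backward arc between two linear sets is Lemma~\ref{v}, which produces \emph{many} backward pairs between two pre-specified sets but gives you nothing once one endpoint has already been fixed to a particular vertex from a previous step; you cannot retroactively demand that a previously chosen $x$ appear as an endpoint of a fresh Lemma~\ref{v} pair. Second, and more subtly, even if you mimic the paper's mixed linear/transitive assignment for the $\beta$-asteroid vertices, some arcs simply cannot be forced simultaneously, because the lemmas control arcs only in a specific order. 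For example, in the first step of embedding a mutant left $\beta_1$-asteroid, Lemma~\ref{s} fixes $x_5^2\in S_5^2$, $x_5^4\in S_5^4$, $x_6\in S_6$, $x_9\in S_9$ with $\{x_5^4,x_6\}\leftarrow x_9$ and $x_5^2\leftarrow x_6$, but it gives \emph{no} control over the arc between $x_5^4$ and $x_6$ --- and that is exactly the forward arc $(v_{s_i+3},v_{s_i+4})$ of the original $\beta$-asteroid which the mutant deletes. Once $x_5^4$ and $x_6$ have both been chosen, you cannot force that arc forward (you would have needed to shrink $S_6$ to the out-neighbourhood of $x_5^4$ by Lemma~\ref{g} \emph{before} invoking Lemma~\ref{s}, but $x_5^4$ is itself being produced by Lemma~\ref{s}). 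Your claim that ``forward arcs of $T^{*}$ are realised automatically'' overlooks precisely this interaction. The mutant construction is the paper's fix: it discards the three uncontrollable arcs, adds six auxiliary vertices providing alternative triangles, and then (Corollary~\ref{n}) shows that whichever way the uncontrolled arcs fall, one of four vertex deletions always leaves an induced copy of $\mathcal{A}^{\beta}_i$ under either its forest or its cyclic ordering. Without something playing the role of this ``overshoot and case-split'' device, your argument cannot close.
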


In what follows we introduce the tools that we heavily rely on in the proof of Theorem \ref{j}.
\begin{definition}
\normalfont Let $D$ be a tournament with vertices $v_{1},...,v_{7}$, and let $\theta_{1}=(v_{1},v_{2},v_{3},v_{4},v_{5},v_{6},v_{7})$ be an ordering of $V(D)$. Operation $1$ is the permutation of the vertices $v_{1},...,v_{7}$ that converts the ordering $\theta_{1}$ to the ordering $\theta_{2}=(v_{3},v_{1},v_{5},v_{2},v_{4},v_{6},v_{7})$ of $V(D)$, and operation $2$ is the permutation of the vertices $v_{1},...,v_{7}$ that converts the ordering $\theta_{1}$ to the ordering to the ordering $\theta_{3}=(v_{1},v_{2},v_{4},v_{6},v_{3},v_{7},v_{5})$ of $V(D)$.
\end{definition}

Let $H$ be a regular asterism under an ordering $\theta = (v_{1},...,v_{h})$ of its vertices with $\mid$$H$$\mid$ $= h$. Let $\mathcal{A}^{\beta}_{1},...,\mathcal{A}^{\beta}_{l}$ be the $\beta$-asteroids of $H$ under $\theta$. Let $X=V(H)\backslash \bigcup_{i=1}^{l}V(\mathcal{A}^{\beta}_{i}) $ and let $\overline{\theta} $ be the restriction of $\theta$ to $X$. Let $Q_{1},...,Q_{m}$ be the stars of $H$$\mid$$X$ under $\overline{\theta}$. In what follows we will assume without loss of generality that $m=l$.\\
To prove EHC for asterisms we studied the structure of the graph of backward arcs under several vertex orderings. Define $\Theta := \lbrace \theta^{'}$ an ordering of $V(H)$: $\theta^{'}$ is obtained from $\theta$ by performing operation $1$ to the vertex set of some left $\beta_{1}$-asteroids and some left $\beta_{2}$-asteroids of $H$ under $\theta$, and performing operation $2$ to the vertex set of some right $\beta_{1}$-asteroids and some right $\beta_{2}$-asteroids of $H$ under $\theta$$\rbrace$. Notice that $\mid$$\Theta$$\mid$ $= 2^{l}$. Clearly from Figure \ref{fig:generalizedasteroid}, one can notice the structure of the backward arcs when switching from one ordering to another in $\Theta$.  
\begin{remark}
Unlike galaxies and constellations, in asterisms, clearly the proof of Lemmas \ref{s} and \ref{r}  implies that the asterism ordering alone is not enough. In other words all the orderings in $\Theta$ are essential for confirming the conjecture for asterisms, as $H$ has different special configuration of backward arcs under each ordering in $\Theta$.
\end{remark}

 In what follows we explain how we construct the \textit{corresponding digraph of a given asterism}, that plays a central role in the proof of Theorem \ref{j} (This digraph together with the set of orderings $\Theta$ allows the breakthrough to prove the conjecture for asterisms).\vspace{2mm}\\
$\ast$ Let $  i\in [l]$, such that $\mathcal{A}^{\beta}_{i} = \lbrace v_{s_{i}},v_{s_{i}+1},v_{s_{i}+2},v_{s_{i}+3},v_{s_{i}+4},v_{q_{i}-1},v_{q_{i}} \rbrace$ is a left $\beta_{1}$$-$asteroid of $H$ under $\theta$. The \textit{mutant left $\beta_{1}$-asteroid} $\widehat{\mathcal{A}}^{\beta}_{i}$ (equivalently: the \textit{digraph corresponding to} $\mathcal{A}^{\beta}_{i}$ \textit{under} $\theta$) is the $13$-vertex digraph  that is obtained from $\mathcal{A}^{\beta}_{i}$ by deleting the arcs
$(v_{s_{i}+2},v_{q_{i}-1}),(v_{s_{i}+1},v_{q_{i}}),(v_{s_{i}+3},v_{s_{i}+4})$ and adding six extra vertices $x_{i}$ just before $v_{s_{i}}$, $g_{i}$ just before $x_{i}$, $m_{i}$ just before $g_{i}$, $r_{i}$ just after  $v_{s_{i}+3}$, $w_{i}$ just after $v_{s_{i}}$, and $y_{i}$ just before $v_{q_{i}-1}$, such that
 $m_{i}\rightarrow V(\widehat{\mathcal{A}}^{\beta}_{i})\backslash\lbrace r_{i}\rbrace ,g_{i}\rightarrow V(\widehat{\mathcal{A}}^{\beta}_{i})\backslash\lbrace m_{i},w_{i}\rbrace ,x_{i}\rightarrow V(\widehat{\mathcal{A}}^{\beta}_{i})\backslash\lbrace m_{i},g_{i},y_{i}\rbrace $, $v_{s_{i}}\rightarrow w_{i}\rightarrow V(\widehat{\mathcal{A}}^{\beta}_{i})\backslash\lbrace v_{s_{i}},m_{i},x_{i}\rbrace$, $\lbrace v_{s_{i}},v_{s_{i}+1},v_{s_{i}+2},v_{s_{i}+3}\rbrace\rightarrow r_{i}\rightarrow \lbrace m_{i},v_{s_{i}+4},y_{i},v_{q_{i}-1},v_{q_{i}}\rbrace $, and
$V(\widehat{\mathcal{A}}^{\beta}_{i})\backslash\lbrace x_{i},$ $v_{q_{i}-1},v_{q_{i}}\rbrace\rightarrow y_{i}\rightarrow \lbrace x_{i},v_{q_{i}-1},v_{q_{i}}\rbrace $. 
  We write $\widehat{\mathcal{A}}^{\beta}_{i} = \lbrace m_{i},g_{i},x_{i},v_{s_{i}},w_{i},v_{s_{i}+1},v_{s_{i}+2},v_{s_{i}+3},r_{i},v_{s_{i}+4},y_{i},v_{q_{i}-1},v_{q_{i}} \rbrace$ and we call $(m_{i},g_{i},x_{i},v_{s_{i}},w_{i},v_{s_{i}+1},v_{s_{i}+2},v_{s_{i}+3},r_{i},v_{s_{i}+4},y_{i},v_{q_{i}-1},v_{q_{i}})$ the \textit{forest ordering of $\widehat{\mathcal{A}}^{\beta}_{i}$} and $x_{i},w_{i},v_{s_{i}+1},v_{s_{i}+2},$ $v_{s_{i}+3},r_{i},v_{q_{i}-1}$ the \textit{leaves of} $\widehat{\mathcal{A}}^{\beta}_{i}$ (see Figure \ref{fig:betadigraphs}). \vspace{2mm}\\
$\ast$ Let $  i\in [l]$,  such that $\mathcal{A}^{\beta}_{i} = \lbrace v_{s_{i}},v_{s_{i}+1},v_{s_{i}+2},v_{s_{i}+3},v_{s_{i}+4},v_{q_{i}},v_{q_{i}+1} \rbrace$ is a left $\beta_{2}$-asteroid of $H$ under $\theta$. The \textit{mutant left $\beta_{2}$-asteroid} $\widehat{\mathcal{A}}^{\beta}_{i}$ (equivalently: the \textit{digraph corresponding to} $\mathcal{A}^{\beta}_{i}$ \textit{under} $\theta$) is the $13$-vertex digraph  that is obtained from $\mathcal{A}^{\beta}_{i}$ by deleting the arcs
$(v_{s_{i}+2},v_{q_{i}+1}),(v_{s_{i}+1},v_{q_{i}}),(v_{s_{i}+3},v_{s_{i}+4})$ and adding six extra vertices $x_{i}$ just before $v_{s_{i}}$, $g_{i}$ just before $x_{i}$, $m_{i}$ just before $g_{i}$, $r_{i}$ just after  $v_{s_{i}+3}$, $w_{i}$ just after $v_{s_{i}}$, and $y_{i}$ just after $v_{q_{i}+1}$, such that
 $m_{i}\leftarrow r_{i}, g_{i}\leftarrow w_{i},x_{i}\leftarrow y_{i}, m_{i}\rightarrow V(\widehat{\mathcal{A}}^{\beta}_{i})\backslash\lbrace r_{i}\rbrace ,g_{i}\rightarrow V(\widehat{\mathcal{A}}^{\beta}_{i})\backslash\lbrace m_{i},w_{i}\rbrace ,x_{i}\rightarrow V(\widehat{\mathcal{A}}^{\beta}_{i})\backslash\lbrace m_{i},g_{i},y_{i}\rbrace $, $v_{s_{i}}\rightarrow w_{i}\rightarrow V(\widehat{\mathcal{A}}^{\beta}_{i})\backslash\lbrace v_{s_{i}},m_{i},x_{i}\rbrace$, $\lbrace v_{s_{i}},v_{s_{i}+1},v_{s_{i}+2},v_{s_{i}+3}\rbrace\rightarrow r_{i}\rightarrow \lbrace v_{s_{i}+4},v_{q_{i}},v_{q_{i}+1},y_{i}\rbrace $, and
$V(\widehat{\mathcal{A}}^{\beta}_{i})\backslash\lbrace x_{i}\rbrace\rightarrow y_{i} $. 
  We write $\widehat{\mathcal{A}}^{\beta}_{i} = \lbrace m_{i},g_{i},x_{i},v_{s_{i}},w_{i},v_{s_{i}+1},v_{s_{i}+2},v_{s_{i}+3},r_{i},v_{s_{i}+4},v_{q_{i}},v_{q_{i}+1},y_{i} \rbrace$ and we call $(m_{i},g_{i},x_{i},v_{s_{i}},w_{i},v_{s_{i}+1},v_{s_{i}+2},v_{s_{i}+3},r_{i},v_{s_{i}+4},v_{q_{i}},v_{q_{i}+1},y_{i})$ the \textit{forest ordering of $\widehat{\mathcal{A}}^{\beta}_{i}$} and $x_{i},w_{i},v_{s_{i}+1},v_{s_{i}+2},$ $v_{s_{i}+3},r_{i},v_{q_{i}+1}$ the \textit{leaves of} $\widehat{\mathcal{A}}^{\beta}_{i}$ (see Figure \ref{fig:betadigraphs}). \vspace{2mm}\\ 
$\ast$ Let $  i\in [l]$,  such that $\mathcal{A}^{\beta}_{i} = \lbrace v_{q_{i}},v_{q_{i}+1},v_{s_{i}},v_{s_{i}+1},v_{s_{i}+2},v_{s_{i}+3},v_{s_{i}+4}\rbrace$ is a right $\beta_{1}$$-$asteroid of $H$ under $\theta$. The \textit{mutant right $\beta_{1}$-asteroid} $\widehat{\mathcal{A}}^{\beta}_{i}$ (equivalently: the \textit{digraph corresponding to} $\mathcal{A}^{\beta}_{i}$ \textit{under} $\theta$) is the $13$-vertex digraph  that is obtained from $\mathcal{A}^{\beta}_{i}$ by deleting the arcs
$(v_{q_{i}+1},v_{s_{i}+2}),(v_{q_{i}},v_{s_{i}+3}),(v_{s_{i}},v_{s_{i}+1})$ and adding six extra vertices $x_{i}$ just after $v_{s_{i}+4}$, $g_{i}$ just after $x_{i}$, $m_{i}$ just after $g_{i}$, $r_{i}$ just after  $v_{s_{i}}$, $w_{i}$ just after $v_{s_{i}+3}$, and $y_{i}$ just after $v_{q_{i}+1}$, such that
 $w_{i}\leftarrow g_{i}, V(\widehat{\mathcal{A}}^{\beta}_{i})\backslash\lbrace r_{i}\rbrace\rightarrow m_{i} , V(\widehat{\mathcal{A}}^{\beta}_{i})\backslash\lbrace m_{i},w_{i}\rbrace\rightarrow g_{i} , V(\widehat{\mathcal{A}}^{\beta}_{i})\backslash\lbrace m_{i},g_{i},y_{i}\rbrace \rightarrow x_{i}$, $V(\widehat{\mathcal{A}}^{\beta}_{i})\backslash\lbrace v_{s_{i}+4},x_{i},m_{i}\rbrace\rightarrow w_{i}\rightarrow v_{s_{i}+4} $, $\lbrace m_{i},v_{q_{i}},v_{q_{i}+1},y_{i},v_{s_{i}}\rbrace\rightarrow r_{i}\rightarrow \lbrace v_{s_{i}+1},v_{s_{i}+2},v_{s_{i}+3}, v_{s_{i}+4}\rbrace $, and
$\lbrace x_{i},v_{q_{i}},v_{q_{i}+1} \rbrace\rightarrow y_{i}\rightarrow V(\widehat{\mathcal{A}}^{\beta}_{i})\backslash\lbrace x_{i},v_{q_{i}},v_{q_{i}+1}\rbrace$. 
  We write $\widehat{\mathcal{A}}^{\beta}_{i} = \lbrace v_{q_{i}},v_{q_{i}+1},y_{i},v_{s_{i}},r_{i},v_{s_{i}+1},v_{s_{i}+2},v_{s_{i}+3},w_{i},v_{s_{i}+4}\\,x_{i},g_{i},m_{i} \rbrace$ and we call $(v_{q_{i}},v_{q_{i}+1},y_{i},v_{s_{i}},r_{i},v_{s_{i}+1},v_{s_{i}+2},v_{s_{i}+3},w_{i},v_{s_{i}+4},x_{i},g_{i},m_{i})$ the \textit{forest ordering of $\widehat{\mathcal{A}}^{\beta}_{i}$} and $v_{q_{i}+1},r_{i},v_{s_{i}+1},v_{s_{i}+2},v_{s_{i}+3},w_{i},x_{i}$ the \textit{leaves of} $\widehat{\mathcal{A}}^{\beta}_{i}$ (see Figure \ref{fig:betadigraphs}). \vspace{2mm}\\
$\ast$ Let $  i\in [l]$,  such that $\mathcal{A}^{\beta}_{i} = \lbrace v_{q_{i}-1},v_{q_{i}},v_{s_{i}},v_{s_{i}+1},v_{s_{i}+2},v_{s_{i}+3},v_{s_{i}+4}\rbrace$ is a right $\beta_{2}$$-$asteroid of $H$ under $\theta$. The \textit{mutant right $\beta_{2}$-asteroid} $\widehat{\mathcal{A}}^{\beta}_{i}$ (equivalently: the \textit{digraph corresponding to} $\mathcal{A}^{\beta}_{i}$ \textit{under} $\theta$) is the $13$-vertex digraph  that is obtained from $\mathcal{A}^{\beta}_{i}$ by deleting the arcs
$(v_{q_{i}-1},v_{s_{i}+2}),(v_{q_{i}},v_{s_{i}+3}),(v_{s_{i}},v_{s_{i}+1})$ and adding six extra vertices $x_{i}$ just after $v_{s_{i}+4}$, $g_{i}$ just after $x_{i}$, $m_{i}$ just after $g_{i}$, $r_{i}$ just after  $v_{s_{i}}$, $w_{i}$ just after $v_{s_{i}+3}$, and $y_{i}$ just before $v_{q_{i}-1}$, such that
 $r_{i}\leftarrow m_{i}, w_{i}\leftarrow g_{i},y_{i}\leftarrow x_{i}, V(\widehat{\mathcal{A}}^{\beta}_{i})\backslash\lbrace r_{i}\rbrace\rightarrow m_{i} , V(\widehat{\mathcal{A}}^{\beta}_{i})\backslash\lbrace m_{i},w_{i}\rbrace\rightarrow g_{i} , V(\widehat{\mathcal{A}}^{\beta}_{i})\backslash\lbrace m_{i},g_{i},y_{i}\rbrace \rightarrow x_{i}$, $V(\widehat{\mathcal{A}}^{\beta}_{i})\backslash\lbrace v_{s_{i}+4},x_{i},m_{i}\rbrace\rightarrow w_{i}\rightarrow v_{s_{i}+4} $, $\lbrace v_{q_{i}-1},v_{q_{i}},y_{i},v_{s_{i}}\rbrace\rightarrow r_{i}\rightarrow \lbrace v_{s_{i}+1},v_{s_{i}+2},v_{s_{i}+3}, v_{s_{i}+4}\rbrace $, and
$ y_{i}\rightarrow V(\widehat{\mathcal{A}}^{\beta}_{i})\backslash \lbrace x_{i}\rbrace $. 
  We write $\widehat{\mathcal{A}}^{\beta}_{i} = \lbrace y_{i},v_{q_{i}-1},v_{q_{i}},v_{s_{i}},r_{i},v_{s_{i}+1},v_{s_{i}+2},v_{s_{i}+3},w_{i},v_{s_{i}+4},x_{i},g_{i},m_{i} \rbrace$ and we call $(y_{i},v_{q_{i}-1},\\v_{q_{i}},v_{s_{i}},r_{i},v_{s_{i}+1},v_{s_{i}+2},v_{s_{i}+3},w_{i},v_{s_{i}+4},x_{i},g_{i},m_{i})$ the \textit{forest ordering of $\widehat{\mathcal{A}}^{\beta}_{i}$} and $v_{q_{i}-1},r_{i},v_{s_{i}+1},v_{s_{i}+2},v_{s_{i}+3},w_{i},x_{i}$ the \textit{leaves of} $\widehat{\mathcal{A}}^{\beta}_{i}$ (see Figure \ref{fig:betadigraphs}). 
   \vspace{3.5mm} 
   
   A \textit{mutant $\beta$-asteroid} is a mutant left or right $\beta_{1}$-asteroid or mutant left or right $\beta_{2}$-asteroid.\vspace{3mm}
   
Now we are ready to define the \textit{corresponding digraph of an asterism}:\vspace{2.5mm}\\Let $\widehat{\mathcal{A}}^{\beta}_{i}$ be the digraph corresponding to $\mathcal{A}^{\beta}_{i}$ for $i=1,...,l$. Let $\widehat{H}$ be the digraph obtained from $H$ by transforming $\mathcal{A}^{\beta}_{i}$ to $\widehat{\mathcal{A}}^{\beta}_{i}$ for $i=1,...,l$, and let $\widehat{\theta}$ be the obtained ordering of $V(\widehat{H})$. That is $\widehat{\theta}$ is obtained from $\theta$ by replacing the vertices of $\mathcal{A}^{\beta}_{i}$ by the vertices of $\widehat{\mathcal{A}}^{\beta}_{i}$ for $i=1,...,l$, such that $\widehat{\mathcal{A}}^{\beta}_{i}$ has its forest ordering under $\widehat{\theta}$, and:
\begin{itemize}
\item if $v_{s_{i}}$ is a vertex of a left $\beta_{1}$-asteroid, then $m_{i},g_{i},x_{i},v_{s_{i}},w_{i},v_{s_{i}+1},v_{s_{i}+2},v_{s_{i}+3},r_{i},v_{s_{i}+4}$ are consecutive  under $\widehat{\theta}$ and $y_{i},v_{q_{i}-1},v_{q_{i}}$ are consecutive  under $\widehat{\theta}$,
\item if $v_{s_{i}}$ is a vertex of a left $\beta_{2}$-asteroid, then $m_{i},g_{i},x_{i},v_{s_{i}},w_{i},v_{s_{i}+1},v_{s_{i}+2},v_{s_{i}+3},r_{i},v_{s_{i}+4}$ are consecutive  under $\widehat{\theta}$ and $v_{q_{i}},v_{q_{i}+1},y_{i}$ are consecutive  under $\widehat{\theta}$, 
\item if $v_{s_{i}}$ is a vertex of a right $\beta_{1}$-asteroid, then $v_{q_{i}},v_{q_{i}+1},y_{i}$ are consecutive  under $\widehat{\theta}$ and $v_{s_{i}},r_{i},v_{s_{i}+1},v_{s_{i}+2},$ $v_{s_{i}+3},w_{i},v_{s_{i}+4},x_{i},g_{i},m_{i}$ are consecutive  under $\widehat{\theta}$,
\item  if $v_{s_{i}}$ is a vertex of a right $\beta_{2}$-asteroid, then  $y_{i},v_{q_{i}-1},v_{q_{i}}$ are consecutive  under $\widehat{\theta}$ and $v_{s_{i}},r_{i},v_{s_{i}+1},v_{s_{i}+2},$ $v_{s_{i}+3},w_{i},v_{s_{i}+4},x_{i},g_{i},m_{i}$ are consecutive  under $\widehat{\theta}$. 
     \end{itemize} 
      We have $V(\widehat{H}) = V(H)\cup \bigcup_{i=1}^{l}X_i$ and
  $E(\widehat{H})= (E(H)\backslash \displaystyle{\bigcup_{i=1}^{l} E(\mathcal{A}^{\beta}_{i}))}\cup \displaystyle{\bigcup_{i=1}^{l}} E(\widehat{\mathcal{A}}^{\beta}_{i})\cup [[\displaystyle{\bigcup_{i=1}^{l}}$$(\displaystyle{\bigcup_{p\in X_{i}}}$$(\lbrace (x,p): x<_{\widehat{\theta}}p$ and $x\in V(\widehat{H}) \rbrace \cup $$\lbrace (p,x): p<_{\widehat{\theta}}x$ and $x\in V(\widehat{H}) \rbrace ))]\backslash \displaystyle{\bigcup_{i=1}^{l}} (\lbrace(x_{i},y_{i}): x_{i}<_{\widehat{\theta}}$ $ y_{i} \rbrace \cup \lbrace(y_{i},x_{i}): y_{i}<_{\widehat{\theta}}x_{i} \rbrace \cup \lbrace(m_{i},r_{i}): m_{i}<_{\widehat{\theta}}$ $ r_{i} \rbrace \cup \lbrace(r_{i},m_{i}): r_{i}<_{\widehat{\theta}}m_{i} \rbrace \cup \lbrace(g_{i},w_{i}): g_{i}<_{\widehat{\theta}}$ $ w_{i} \rbrace \cup \lbrace(w_{i},g_{i}): w_{i}<_{\widehat{\theta}}g_{i} \rbrace )]$, where $X_{i}=\lbrace m_{i},g_{i},x_{i},w_{i},r_{i},y_{i}\rbrace$. Note that $E(B(\widehat{H},\widehat{\theta}))$$ =$$ E(B(H,\theta))\cup\lbrace x_{i}y_{i}: i=1,...,l\rbrace\cup\lbrace m_{i}r_{i}: i=1,...,l\rbrace\cup\lbrace g_{i}w_{i}: i=1,...,l\rbrace$.\\ We say that $\widehat{H}$ is the \textit{digraph corresponding to} $H$ \textit{under} $\theta$, and $\widehat{\theta}$ is the \textit{ordering of $V(\widehat{H})$ corresponding to} $\theta$.
  \begin{figure}[h]
	\centering
	\includegraphics[width=0.9\linewidth]{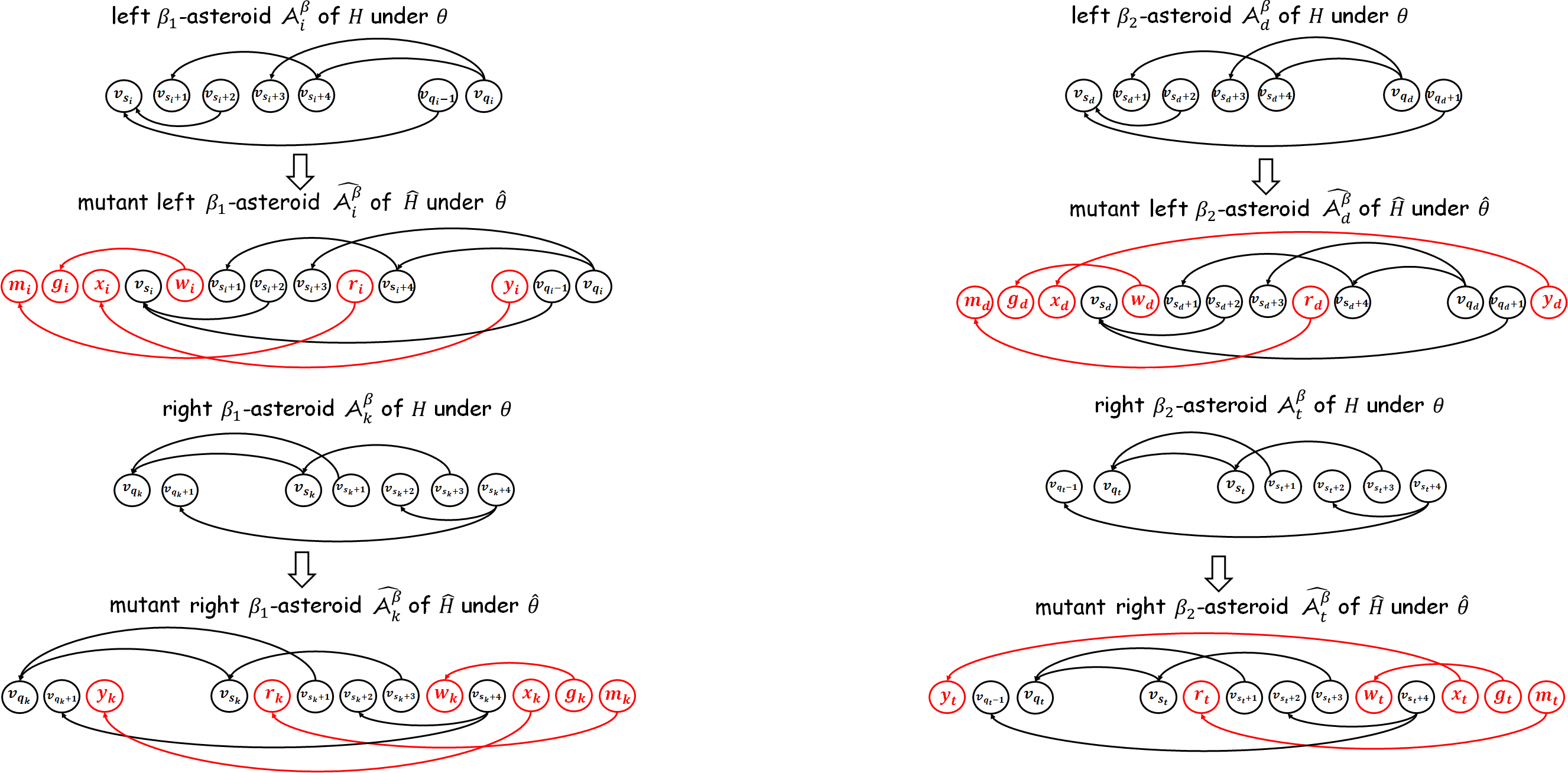}
	\caption{$\beta$$-$asteroids of $H$ under $\theta$: $\mathcal{A}_{i}^{\beta},\mathcal{A}_{k}^{\beta},\mathcal{A}_{d}^{\beta},\mathcal{A}_{t}^{\beta},$ and their corresponding mutant $\beta$-asteroids of $\widehat{H}$ under $\widehat{\theta}$: $\widehat{\mathcal{A}}_{i}^{\beta},\widehat{\mathcal{A}}_{k}^{\beta},\widehat{\mathcal{A}}_{d}^{\beta},$ and $\widehat{\mathcal{A}}_{t}^{\beta}$ respectively. The backward arcs are drawn. All the nondrawn arcs are forward except that the arcs $(v_{s_{i}+2},v_{q_{i}-1}),(v_{s_{i}+1},v_{q_{i}}),(v_{s_{i}+3},v_{s_{i}+4})\notin \widehat{\mathcal{A}}_{i}^{\beta}$, $(v_{q_{k}+1},v_{s_{k}+2}),(v_{s_{k}},v_{s_{k}+1}),(v_{q_{k}},v_{s_{k}+3})\notin \widehat{\mathcal{A}}_{k}^{\beta}$, $(v_{s_{d}+2},v_{q_{d}+1}),(v_{s_{d}+3},v_{s_{d}+4}),(v_{s_{d}+1},v_{q_{d}})\notin \widehat{\mathcal{A}}_{d}^{\beta}$, and $(v_{q_{t}-1},v_{s_{t}+2}),(v_{s_{t}},v_{s_{t}+1}),(v_{q_{t}},v_{s_{t}+3})\notin \widehat{\mathcal{A}}_{t}^{\beta}$.}
	\label{fig:betadigraphs}
\end{figure}
\begin{remark}
The corresponding digraph of a given asterism $H$ under $\theta$ is constructed using all backward arcs configurations of $H$ under $\theta^{'}$, for all $\theta^{'}\in \Theta$. So that whatever the outcomes and all possible cases that we have to study in the proof, we will be able to extract $H$ from an $\epsilon$-critical tournament $T$ using its corresponding digraph. Moreover $H$ will be viewed in $T$ under some ordering $\theta^{'}\in \Theta$. 
\end{remark}

Let $s$ be a $\lbrace 0,1 \rbrace$-vector. Denote $s_{c}$ the vector obtained from $s$ by replacing every subsequence of consecutive $1'$s by single $1$. 

Let $H$ be a regular asterism under an ordering $\theta$ of its vertices with $\mid$$H$$\mid$ $= h$.  Let $\mathcal{A}^{\beta}_{1},...,\mathcal{A}^{\beta}_{l}$ be the $\beta$-asteroids of $H$ under $\theta$, and $Q_{1},...,Q_{l}$ be the stars of $H$$\mid$$X$ under $\overline{\theta}$. Let $\widehat{\theta} = (u_{1},...,u_{h+6l})$ be the ordering of $V(\widehat{H})$ corresponding to $\theta$. 
For $i \in \lbrace 0,...,l \rbrace$ define $\widehat{H}^{i} = \widehat{H}$$\mid$$\bigcup_{j=1}^{i}(V(\widehat{\mathcal{A}}^{\beta}_{j})\cup V(Q_{j}))$ where $\widehat{H}^{l} = \widehat{H}$, and $\widehat{H}^{0}$ is the empty digraph. Let $s^{\widehat{H},\widehat{\theta}}$ be a $\lbrace 0,1 \rbrace$-vector such that $s^{\widehat{H},\widehat{\theta}}(i) = 1$ if and only if $u_{i}$ is a leaf of one of the stars of $\widehat{H}$ under $\widehat{\theta}$ or a leaf of one of $\widehat{\mathcal{A}}^{\beta}_{i}$ of $\widehat{H}$ under $\widehat{\theta}$ for $i=1,...,l$. Let $w = s_{c}^{\widehat{H},\widehat{\theta}}$ and let $i_{r}$ be such that $w_{i_{r}}=1$. Let $j$ be such that $s^{\widehat{H},\widehat{\theta}}_{j}=1$. We say that $s^{\widehat{H},\widehat{\theta}}_{j}$ \textit{corresponds to} $w_{i_{r}}$ if $s^{\widehat{H},\widehat{\theta}}_{j}$ belongs to the subsequence of consecutive $1'$s that is replaced by the entry $w_{i_{r}}$. For $k \in \lbrace 1,...,l \rbrace$, let $\widehat{\theta}_{k} = (v_{k_{1}},...,v_{k_{t_{k}}})$ with $t_{k}=$ $\mid$$ \widehat{H}^{k}$$ \mid$, be the restriction of $\widehat{\theta}$ to $V(\widehat{H}^{k})$. Let $s^{\widehat{H},\widehat{\theta}}_{\widehat{H}^{k}}$ be the restriction of $s^{\widehat{H},\widehat{\theta}}$ to the $0's$ and $1's$ corresponding to $V(\widehat{H}^{k})$ (notice that $s^{\widehat{H},\widehat{\theta}}_{\widehat{H}^{k}}= s^{\widehat{H}^{k},\widehat{\theta}_{k}}$) and let $^{c}s^{\widehat{H},\widehat{\theta}}_{\widehat{H}^{k}}$ be the vector obtained from $s^{\widehat{H},\widehat{\theta}}_{\widehat{H}^{k}}$ by replacing every subsequence of consecutive $1's$ corresponding to the same entry of $s^{\widehat{H},\widehat{\theta}}_{c}$ by single $1$. We say that a smooth $(c,\lambda ,w)$-structure of a tournament $T$ \textit{corresponds}  \textit{to $\widehat{H}^{k}$ under $(\widehat{H},\widehat{\theta})$} if $w =$ $ ^{c}s^{\widehat{H},\widehat{\theta}}_{\widehat{H}^{k}}$ (in case $k=l$, we simply say: a smooth $(c,\lambda ,w)$-structure of a tournament $T$ \textit{corresponds}  \textit{to $\widehat{H}$ under $\widehat{\theta}$}). Notice that $s^{\widehat{H},\widehat{\theta}}_{\widehat{H}^{l}}=s^{\widehat{H},\widehat{\theta}}$ and $^{c}s^{\widehat{H},\widehat{\theta}}_{\widehat{H}^{l}}=s^{\widehat{H},\widehat{\theta}}_{c}$.\\
Let $\nu =$ $^{c}s^{\widehat{H},\widehat{\theta}}_{\widehat{H}^{k}}$. Let $\delta^{\nu}:$ $\lbrace j: \nu_{j} = 1 \rbrace \rightarrow \mathbb{N}$ be a function that assigns to every nonzero entry of $\nu$ the number of consecutive $1'$s of $s^{\widehat{H},\widehat{\theta}}_{\widehat{H}^{k}}$ replaced by that entry of $\nu$.
Fix $k \in \lbrace 0,...,l \rbrace$. Let $(S_{1},...,S_{\mid w \mid})$ be a smooth $(c,\lambda ,w)$-structure corresponding to $\widehat{H}^{k}$ under $(\widehat{H},\widehat{\theta})$.
 \\We borrow the definition of \textit{well-contained} from \cite{polll}. Let $i_{r}$ be such that $w(i_{r}) = 1$. Assume that $S_{i_{r}} = \lbrace s^{1}_{i_{r}},...,s_{i_{r}}^{\mid S_{i_{r}} \mid} \rbrace$ and $(s^{1}_{i_{r}},...,s_{i_{r}}^{\mid S_{i_{r}} \mid})$ is a transitive ordering. Write $m(i_{r}) = \lfloor\frac{\mid S_{i_{r}} \mid}{\delta^{w}(i_{r})}\rfloor$. Denote $S^{j}_{i_{r}} = \lbrace s^{(j-1)m(i_{r})+1}_{i_{r}},$ $...,s_{i_{r}}^{jm(i_{r})} \rbrace$ for $j \in \lbrace 1,...,\delta^{w}(i_{r}) \rbrace$. For every $v \in S^{j}_{i_{r}}$ denote $\xi(v) = (\mid$$\lbrace k < i_{r}: w(k) = 0 \rbrace$$\mid$ $+$ $\displaystyle{\sum_{k < i_{r}: w(k) = 1}\delta^{w}(k) })$ $+$ $j$. For every $v \in S_{i_{r}}$ such that $w(i_{r}) = 0$ denote $\xi(v) = (\mid$$\lbrace k < i_{r}: w(k) = 0 \rbrace$$\mid$ $+$ $\displaystyle{\sum_{k < i_{r}: w(k) = 1}\delta^{w}(k) })$ $+$ $1$. We say that $\widehat{H}^{k}$ is \textit{well-contained in} $(S_{1},...,S_{\mid w \mid})$ that corresponds to $\widehat{H}^{k}$ under $(\widehat{H},\widehat{\theta})$ if there is an injective homomorphism $f$ of $\widehat{H}^{k}$ into $T$$\mid$$\bigcup_{i = 1}^{\mid w \mid}S_{i}$ such that $\xi(f(u_{k_j})) = j$ for every $j \in \lbrace 1,...,t_{k} \rbrace$, where $\widehat{\theta}_{k} = (u_{k_{1}},...,u_{k_{t_{k}}})$. 
 \begin{remark}
$^{c}s^{\widehat{H},\widehat{\theta}}_{\widehat{H}^{k}}$ and  $s_{c}^{\widehat{H}^{k},\widehat{\theta}_{k}}$ are not necessarily the same vector. It's the case if there exists a sequence  of consecutive $1$'s in $s^{\widehat{H}^{k},\widehat{\theta}_{k}}$ that are separated by a $0$ in $s^{\widehat{H},\widehat{\theta}}$. 
 \end{remark} 
 \subsection{An overview of the proof}
 
 \hspace{3.5mm} The proof of Theorem \ref{j} is very technical and bit hard. To this end we will give the reader an intuition how the proof works, what are the techniques that allows the breakthrough in asterisms, and what are the most important steps of the proof.
 
Let $H$ be an asterism under $\theta$. To prove that $H$ satisfies EHC, it is enough to find a copy of $H$ in an $\epsilon$-critical tournament $T$ for some $\epsilon >0$.\\  
 Only at the starting point we will proceed as in galaxies and constellations by assuming that $EHC$ is not true for $H$, and so Theoem \ref{i} implies that we can assume that there exists a smooth $(c,\lambda ,w)$-structure (in this step we did not need to assume any structural conditions on the forbidden tournament $H$, so this step is as in galaxies and constellations). What is different is that instead of assuming the existence of a $(c,\lambda ,w)$-structure corresponding to $H$ under $\theta$, we will assume the existence of a smooth $(c,\lambda ,w)$-structure $\chi$ corresponding to $\widehat{H}$ under $\widehat{\theta}$.  Clearly  Lemmas \ref{s} and \ref{r} does not imply that we can construct a copy of a $\beta$-asteroid. So we can't proceed here as in galaxies and constellations by constructing the  $\beta$-asteroids one by one following the ordering $\theta$ to get a copy of $H$. This implies that following the ordering $\theta$ alone will not work. Here is the point where we start thinking about another orderings for $H$ that gives us different configuration of the backward arcs than that obtained under the ordering $\theta$. As an outcome we form the set of orderings denoted by $\Theta$, that contains orderings of $H$, such that every connected component of the graph of backward arcs of $H$ under $\theta^{'}\in \Theta$ is a triangle or a $4$-path or a star. So in order to be able to extract from $T$ a copy of $H$ under $\theta^{'}$, for some $\theta^{'}\in \Theta$, we construct a copy of a mutant $\beta$-asteroid instead of a $\beta$-asteroid (since we don't necessarily get a $\beta$-asteroid as we said). The importance of constructing copies of mutant $\beta$-asteroids, denoted by $Y_i$, is that whatever the orientation of the arcs in $E(T$$\mid$$\bigcup_{i}Y_i)\backslash E(\bigcup_{i}Y_i)$ is, we can extract from $E(T$$\mid$$\bigcup_{i}Y_i)$ either a $4$-path or a triangle. So we construct in $T$ the corresponding digraph $\widehat{H}$ of $H$ under $\theta$, which is obtained from $H$ by replacing $\beta$-asteroids by mutant $\beta$-asteroids. Here we construct the mutant $\beta$-asteroids one by one after updating the sets in $\chi$ in a way that we can merge all the constructed copies together. Similarly we construct the rest of the stars and then merge them with the constructed copies of the mutant $\beta$-asteroids to get a copy of $\widehat{H}$ under $\widehat{\theta}$. It's the time to extract a copy of $H$ from the tournament in $T$ induced by the vertex set of the copy of $\widehat{H}$. Let's now order the vertices of $\widehat{H}$ according to their appearance in $\chi$. Denote this ordering by $\alpha$. Now for all $i$, we can extract either a triangle or a $\beta$-asteroid. If the former holds, we extract the two stars that forms together with the triangle the cyclic ordering of the $\beta$-asteroid. If the latter holds we extract the $\beta$-asteroid (here the $\beta$-asteroid appears in its forest ordering). Finally we delete from $\alpha$ all the vertices of the nonextracted substructures. The outcome is an ordering $\theta^{'}\in \Theta$. We are done.  
\subsection{Proof}
\hspace{3.5mm} In the following lemma we used Lemmas \ref{s} and \ref{r} to prove the existence of a mutant $\beta$-asteroid in an appropriate $\epsilon$-critical tournament. 
\begin{lemma} \label{k} 
Let $\widehat{\mathcal{A}}^{\beta}$ be a mutant $\beta$-asteroid and let $\gamma = (a_{1},...,a_{13})$ be its forest ordering. Let $c>0$, $0<\lambda <\frac{1}{70}$, $0<\epsilon< min \lbrace log_{\frac{c}{4}}(\frac{1}{2}), log_{\frac{c}{8}}(1-\frac{c}{10})\rbrace$ be constants, and $w$ a $\lbrace 0,1 \rbrace$-vector with $\mid$$w$$\mid$ $=9$. Let $(S_{1},...,S_{\mid w \mid})$ be a smooth $(c,\lambda ,w)$-structure of an $\epsilon$-critical tournament $T$ corresponding to $\widehat{\mathcal{A}}^{\beta}$ under $\gamma$, with $\mid$$T$$\mid$ $= n$. Then $\widehat{\mathcal{A}}^{\beta}$ is well-contained in $(S_{1},...,S_{\mid w \mid})$.  \end{lemma}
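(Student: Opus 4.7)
The plan is to construct the required injective homomorphism $f:V(\widehat{\mathcal{A}^{\beta}})\to V(T|\bigcup_{i=1}^{9}S_{i})$ realizing $\xi(f(a_{j}))=j$ for all $j$, by embedding the 13 vertices of the forest ordering $\gamma$ one at a time. Because the four kinds of mutant $\beta$-asteroid are structurally symmetric, I would treat only the mutant left $\beta_{1}$-asteroid in detail and indicate that the other three cases go the same way. First I would fix the correspondence: the leaves of $\widehat{\mathcal{A}^{\beta}}$ sit at forest positions $3,5,6,7,8,9,12$, so the compressed vector is $w=(0,0,1,0,1,0,0,1,0)$ with $\delta^{w}(3)=\delta^{w}(8)=1$ and $\delta^{w}(5)=5$. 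Hence $S_{3}$ and $S_{8}$ are transitive and each contributes a single vertex (hosting $a_{3}$ and $a_{12}$), $S_{5}$ is transitive and its canonical partition $S_{5}^{1},\dots,S_{5}^{5}$ hosts $a_{5},a_{6},a_{7},a_{8},a_{9}$ in that order, and the linear sets $S_{1},S_{2},S_{4},S_{6},S_{7},S_{9}$ host $a_{1},a_{2},a_{4},a_{10},a_{11},a_{13}$ respectively.

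Next I would enumerate the backward arcs of $\widehat{\mathcal{A}^{\beta}}$ under $\gamma$. Apart from a bounded number of short backward arcs inherited from the underlying left $\beta_{1}$-asteroid and confined to forest positions in $\{4,\dots,13\}$, the mutation introduces three long backward arcs at position-pairs $(9,1),(5,2),(11,3)$, namely the arcs onto the three added sources $a_{1},a_{2},a_{3}$. I would process the vertices from $a_{13}$ down to $a_{1}$; at each step, before selecting $f(a_{k})$, I apply Lemma \ref{g} to its target set (either $S_{\sigma(k)}$ or the appropriate piece $S_{5}^{j}$) to pass to the subset of vertices that are correctly oriented towards every successor already placed --- forward via smoothness, and backward via the specified arcs of $\widehat{\mathcal{A}^{\beta}}$. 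Since each vertex has a bounded number of arcs incident with it and $\lambda<\frac{1}{70}$, Lemma \ref{g} keeps at least half of each target set surviving at every stage. I would then invoke Lemma \ref{f} (whose hypothesis matches $\epsilon<\log_{c/8}(1-c/10)$) to extract vertices in linear sets realizing the simple backward arcs into the transitive targets, and Lemma \ref{s} (whose hypotheses are matched by the combination of both bounds on $\epsilon$) to extract vertices that must simultaneously carry two backward adjacencies to previously-placed vertices, as demanded by the three long mutation arcs.

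The main obstacle is coordinating the three long mutation arcs with the inherited short ones without overloading any single target set. The mutant digraph is engineered so that each of $a_{1},a_{2},a_{3}$ has exactly one long backward in-neighbor, namely $a_{9},a_{5},a_{11}$ respectively, so that the backward constraints on $S_{1},S_{2},S_{3}$ originate from a single previously-placed vertex each; this bounded branching is what keeps the shrinkage of these sets under control. Once the choices for $a_{9},a_{5},a_{11}$ are made (in $S_{5}^{5},\,S_{5}^{1}$ and $S_{7}$), Lemma \ref{g} leaves a subset of each of $S_{1},S_{2},S_{3}$ of linear size consisting of in-neighbors of the corresponding source, and $a_{1},a_{2},a_{3}$ can then be placed inside these subsets. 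The transitive ordering of $S_{5}$ automatically produces $a_{5},\dots,a_{9}$ in the correct internal order at no extra cost. The one genuinely technical step is the quantitative bookkeeping: verifying that after every successive restriction each target set still contains a vertex of the required type. This is exactly what the hypotheses $\lambda<\frac{1}{70}$ and $\epsilon<\min\{\log_{c/4}(\frac{1}{2}),\log_{c/8}(1-\frac{c}{10})\}$ are calibrated to deliver.
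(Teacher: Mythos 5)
Your structural setup agrees with the paper: the identification of $w=(0,0,1,0,1,0,0,1,0)$, the assignment of the thirteen forest positions to $S_{1},\dots,S_{9}$ and to the blocks $S_{5}^{1},\dots,S_{5}^{5}$, and the location of the three mutation backward arcs at position-pairs $(9,1),(5,2),(11,3)$ are all exactly what the paper uses (it, too, treats only the mutant left $\beta_{1}$-asteroid and omits the other three cases). But the mechanism you give for realizing those three arcs is wrong, and it is the heart of the lemma. You propose to fix $f(a_{9}),f(a_{5}),f(a_{11})$ first and then claim that Lemma \ref{g} ``leaves a subset of each of $S_{1},S_{2},S_{3}$ of linear size consisting of in-neighbors of the corresponding source.'' Smoothness gives the opposite: for a fixed $v\in S_{j}$ with $i<j$ we have $d(S_{i},\lbrace v\rbrace)\geq 1-\lambda$, so all but a $\lambda$-fraction of $S_{i}$ is adjacent \emph{to} $v$, and the set of vertices of $S_{1}$ adjacent \emph{from} a fixed $f(a_{9})\in S_{5}^{5}$ has size at most $\lambda\mid S_{1}\mid$ and may be empty. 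Lemma \ref{g} only ever shrinks a set to the vertices oriented in the forward (majority) direction; it cannot manufacture backward adjacencies. For the same reason a strict right-to-left, one-vertex-at-a-time schedule cannot work: the two endpoints of every backward arc must be chosen \emph{simultaneously}, which is precisely what Lemmas \ref{f}, \ref{s} and \ref{r} are for --- and all of them lean on $\epsilon$-criticality and require one endpoint of each backward arc to live in a large transitive set.

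The paper's proof makes this pairing explicit. It first applies Lemma \ref{s} to $S_{6},S_{9}$ (linear) and $S_{5}^{2},S_{5}^{4}$ (transitive) to realize the inherited chain $x_{5}^{2}\leftarrow x_{6}$, $\lbrace x_{5}^{4},x_{6}\rbrace\leftarrow x_{9}$; then Lemma \ref{r} to a restricted $S_{4}$ against restricted $S_{5}^{3}$ and $S_{8}$ to get $x_{4}\leftarrow\lbrace x_{5}^{3},x_{8}\rbrace$; and then three separate applications of Lemma \ref{f}, each to a restricted linear set paired with a restricted transitive set, to produce the pairs $(x_{3},x_{7})$, $(x_{2},x_{5}^{1})$, $(x_{1},x_{5}^{5})$ realizing the three mutation arcs. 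Your division of labor between the lemmas is also off: Lemma \ref{s} handles the inherited chain through positions $6,10,13$, not the mutation arcs, and each mutation arc needs only a single application of Lemma \ref{f}, exactly because each of $a_{1},a_{2},a_{3}$ has one backward in-neighbor sitting in a transitive block. If you replace your final placement step by this simultaneous-selection scheme, interleaving Lemma \ref{g} restrictions with the criticality lemmas in the paper's order and tracking the constants (the bound $\lambda<\frac{1}{70}$ enters through the estimate $\mid\widehat{S_{5}^{5}}\mid\geq\frac{1-35\lambda}{5}c\,tr(T)$), the argument closes.
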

\begin{proof}
Assume that $\widehat{\mathcal{A}}^{\beta}$ is a mutant left $\beta_{1}$-asteroid and let $\gamma = (a_{1},...,a_{13})$ be its forest ordering (otherwise, the argument is similar, and we omit it). In this case $w=(0,0,1,0,1,0,0,1,0)$. By Lemma \ref{s}, since $T$ is $\epsilon$-critical and $\epsilon < min\lbrace log_{\frac{c}{4}}(\frac{1}{2}), log_{\frac{c}{2}}(1-\frac{c}{5})\rbrace$, there exist vertices $x^{2}_{5}\in S^{2}_{5}$, $x^{4}_{5}\in S^{4}_{5}$, $x_{6}\in S_{6}$, and $x_{9}\in S_{9}$, such that $\lbrace x_{5}^{4},x_{6}\rbrace \leftarrow x_{9}$ and $x_{5}^{2}\leftarrow x_{6}$. Let $\widehat{S_{5}^{3}} = \lbrace x_{5}^{3}\in S_{5}^{3}; x_{5}^{3} \rightarrow \lbrace x_{6},x_{9}\rbrace \rbrace$, $S_{8}^{*} = \lbrace x_{8}\in S_{8}; \lbrace x_{5}^{2},x_{5}^{4},x_{6}\rbrace\rightarrow x_{8} \rightarrow x_{9} \rbrace$, and $S_{4}^{*} = \lbrace x_{4}\in S_{4}; x_{4} \rightarrow \lbrace x_{5}^{2},x_{5}^{4},x_{6},x_{9}\rbrace \rbrace$. Then by Lemma \ref{g}, $\mid$$S_{4}^{*}$$\mid$ $\geq (1-4\lambda)cn\geq \frac{c}{2}n$ since $\lambda \leq \frac{1}{8}$, $\mid$$\widehat{S_{5}^{3}}$$\mid$ $\geq \frac{1-10\lambda}{5}ctr(T)\geq \frac{c}{10}tr(T)$ since $\lambda \leq \frac{1}{20}$, and $\mid$$S_{8}^{*}$$\mid$ $\geq (1-4\lambda)ctr(T)\geq \frac{c}{2}tr(T)$ since $\lambda \leq \frac{1}{8}$. Since $ \epsilon < log_{\frac{c}{8}}(1-\frac{c}{10}) $, then by Lemma \ref{r} there exist vertices $x_{4}\in S^{*}_{4} $, $x_{5}^{3}\in \widehat{S_{5}^{3}}$, and $x_{8}\in S^{*}_{8} $, such that $x_{4}\leftarrow \lbrace x_{5}^{3},x_{8}\rbrace$. Let $S_{3}^{*} = \lbrace x_{3}\in S_{3}; x_{3} \rightarrow \lbrace x_{4},x_{5}^{2},x_{5}^{3},x_{5}^{4},x_{6},x_{8},x_{9}\rbrace \rbrace$ and $S_{7}^{*} = \lbrace x_{7}\in S_{7}; \lbrace x_{4},x_{5}^{2},x_{5}^{3},x_{5}^{4},x_{6}\rbrace\rightarrow x_{7} \rightarrow \lbrace x_{8},x_{9}\rbrace \rbrace$. Then by Lemma \ref{g}, $\mid$$S_{3}^{*}$$\mid$ $\geq (1-7\lambda)ctr(T)\geq \frac{c}{2}tr(T)$ since $\lambda \leq \frac{1}{14}$, and $\mid$$S_{7}^{*}$$\mid$ $\geq (1-7\lambda)cn\geq \frac{c}{2}n$ since $\lambda \leq \frac{1}{14}$. 
 Since $\epsilon < log_{\frac{c}{2}}(1-\frac{c}{2})$, then Lemma \ref{f} implies that there exist vertices $x_{3}\in S_{3}^{*}$ and $x_{7}\in S_{7}^{*}$ such that $x_{3} \leftarrow x_{7}$. Let $S_{2}^{*} = \lbrace x_{2}\in S_{2}; x_{2} \rightarrow \lbrace x_{3},x_{4},x_{5}^{2},x_{5}^{3},x_{5}^{4},x_{6},x_{7},x_{8},x_{9}\rbrace \rbrace$ and $\widehat{S_{5}^{1}} = \lbrace x_{5}^{1}\in S_{5}^{1}; \lbrace x_{3},x_{4}\rbrace\rightarrow x_{5}^{1}\rightarrow \lbrace x_{6}, x_{7}, x_{8},x_{9}\rbrace \rbrace$. Then by Lemma \ref{g}, $\mid$$S_{2}^{*}$$\mid$ $\geq (1-9\lambda)cn\geq \frac{c}{2}n$ since $\lambda \leq \frac{1}{18}$ and $\mid$$\widehat{S_{5}^{1}}$$\mid$ $\geq \frac{1-30\lambda}{5}ctr(T)\geq \frac{c}{10}tr(T)$ since $\lambda \leq \frac{1}{60}$.
 Since $\epsilon < log_{\frac{c}{2}}(1-\frac{c}{10})$, then Lemma \ref{f} implies that there exist vertices $x_{2}\in S_{2}^{*}$ and $x_{5}^{1}\in \widehat{S_{5}^{1}}$ such that $x_{2} \leftarrow x_{5}^{1}$. Let $S_{1}^{*} = \lbrace x_{1}\in S_{1}; x_{1} \rightarrow \lbrace x_{2},x_{3},x_{4},x_{5}^{1},x_{5}^{2},x_{5}^{3},x_{5}^{4},x_{6},x_{7},x_{8},x_{9}\rbrace \rbrace$ and $\widehat{S_{5}^{5}} = \lbrace x_{5}^{5}\in S_{5}^{5}; \lbrace x_{2},x_{3},x_{4}\rbrace\rightarrow x_{5}^{5}\rightarrow \lbrace x_{6}, x_{7}, x_{8},x_{9}\rbrace \rbrace$. Then by Lemma \ref{g}, $\mid$$S_{1}^{*}$$\mid$ $\geq (1-11\lambda)cn\geq \frac{c}{2}n$ since $\lambda \leq \frac{1}{22}$ and $\mid$$\widehat{S_{5}^{5}}$$\mid$ $\geq \frac{1-35\lambda}{5}ctr(T)\geq \frac{c}{10}tr(T)$ since $\lambda \leq \frac{1}{70}$.
 Since $\epsilon < log_{\frac{c}{2}}(1-\frac{c}{10})$, then Lemma \ref{f} implies that there exist vertices $x_{1}\in S_{1}^{*}$ and $x_{5}^{5}\in \widehat{S_{5}^{5}}$ such that $x_{1} \leftarrow x_{5}^{5}$.  So, $ (x_{1},x_{2},x_{3},x_{4},x^{1}_{5},x^{2}_{5},x^{3}_{5},x^{4}_{5},x_{5}^{5},x_{6},x_{7},x_{8},x_{9})$ is the forest ordering of the mutant left $\beta_{1}$-asteroid. $\hfill{\square}$ \vspace{4mm}
\end{proof}

Constructing the corresponding digraph of a given asterism will be very handy in our latter analysis. To this end, for any asterism, we use the previous lemma to construct its corresponding digraph in an appropriate smooth $(c,\lambda ,w)$-structure.
\begin{lemma}\label{d} 
Let $H$ be a regular asterism under an ordering $\theta$ of its vertices with $\mid$$H$$\mid$ $= h$. Let $\mathcal{A}^{\beta}_{1},...,\mathcal{A}^{\beta}_{l}$ be the $\beta$-asteroids of $H$ under $\theta$, and let $Q_{1},...,Q_{l}$ be the stars of $H$$\mid$$(V(H)\backslash \bigcup_{i=1}^{l}V(\mathcal{A}^{\beta}_{i}))$ under $\overline{\theta}$. Let $0 < \lambda < \frac{1}{(4h)^{h+4}}$, $c > 0$ be constants, and $w$ be a $\lbrace 0,1 \rbrace$-vector. Fix $k \in \lbrace 0,...,l \rbrace$ and let $\widehat{\lambda} = (2h)^{l-k}\lambda$ and $\widehat{c} = \frac{c}{(2h)^{l-k}}$. There exist $ \epsilon_{k} > 0$ such that $\forall 0 < \epsilon < \epsilon_{k}$, for every $\epsilon$$-$critical tournament $T$ with $\mid$$T$$\mid$ $= n$ containing $\chi = (S_{1},...,S_{\mid w \mid})$ as a smooth $(\widehat{c},\widehat{\lambda},w)$-structure corresponding to $\widehat{H^{k}}$ under $(\widehat{H},\widehat{\theta})$, we have $\widehat{H^{k}}$ is well-contained in $\chi$.  
\end{lemma}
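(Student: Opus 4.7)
The plan is to prove this by induction on $k$. The base case $k=0$ is immediate because $\widehat{H^{0}}$ is the empty digraph.

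For the inductive step, suppose the lemma holds for $k-1$ with threshold $\epsilon_{k-1}$. Given a smooth $(\widehat{c},\widehat{\lambda},w)$-structure $\chi = (S_{1},\ldots,S_{\mid w \mid})$ corresponding to $\widehat{H^{k}}$ under $\widehat{\theta_{k}}$, the first task is to identify the block $J$ of indices of $\chi$ corresponding to the vertices of $V(\widehat{\mathcal{A}^{\beta}_{k}})\cup V(Q_{k})$ under $\widehat{\theta_{k}}$. The asterism condition (no vertex of a $\beta$-asteroid lies between leaves of a star of the galaxy part) together with the placement rules that keep the auxiliary vertices $m_{k},g_{k},x_{k},w_{k},r_{k},y_{k}$ adjacent to their $\mathcal{A}^{\beta}_{k}$ guarantee that $J$ consists of at most two maximal contiguous sub-blocks, disjoint from the complementary set $I = \lbrace 1,\ldots,\mid w \mid\rbrace \setminus J$ of indices belonging to $\widehat{H^{k-1}}$.

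Apply Lemma \ref{k} to the restriction of $\chi$ to the nine-set sub-block of $\widehat{\mathcal{A}^{\beta}_{k}}$ to obtain an embedding of that mutant $\beta$-asteroid into $T$, and apply a standard star-embedding argument, analogous to what is done for galaxies, to the sub-block of $Q_{k}$ to embed that star. These two embeddings together pick at most $2h$ vertices of $T$. Now replace every $S_{i}$ with $i\in I$ by the subset $S_{i}^{\ast}$ consisting of those vertices of $S_{i}$ whose adjacencies to every already-embedded vertex match the arcs of $\widehat{H^{k}}$. Since only forward arcs of $\widehat{H^{k}}$ connect $V(\widehat{H^{k-1}})$ to $V(\widehat{\mathcal{A}^{\beta}_{k}})\cup V(Q_{k})$, Lemma \ref{g} gives $\mid S_{i}^{\ast}\mid\geq (1-2h\widehat{\lambda})\mid S_{i}\mid\geq \mid S_{i}\mid/(2h)$, the last inequality following from the hypothesis $\lambda<1/(4h)^{h+4}$. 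Lemma \ref{b} then shows that the density between the restricted sets is still at least $1-2h\widehat{\lambda}$, so $(S_{i}^{\ast})_{i\in I}$ is a smooth $(\widehat{c}/(2h),\,2h\widehat{\lambda},\,w_{I})$-structure corresponding to $\widehat{H^{k-1}}$ under $\widehat{\theta_{k-1}}$, where $w_{I}$ is the restriction of $w$ to $I$; this is precisely the parameter regime required for step $k-1$.

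Set $\epsilon_{k}$ to be the minimum of $\epsilon_{k-1}$, the threshold from Lemma \ref{k}, and the threshold required by the star-embedding argument. The induction hypothesis applied to $(S_{i}^{\ast})_{i\in I}$ produces an embedding of $\widehat{H^{k-1}}$, which, combined with the embeddings of $\widehat{\mathcal{A}^{\beta}_{k}}$ and $Q_{k}$ constructed above, gives the required well-containment of $\widehat{H^{k}}$ in $\chi$. The hard part will be the parameter bookkeeping: accurately tracking how $l-k$ successive restrictions compound the losses in both set-size and density parameters, and verifying that the restricted indicator $w_{I}$ genuinely matches the compressed $\lbrace 0,1\rbrace$-vector associated with $\widehat{H^{k-1}}$ under $\widehat{\theta_{k-1}}$. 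The hypothesis $\lambda<1/(4h)^{h+4}$ is the safety margin designed precisely to absorb all such losses cleanly.
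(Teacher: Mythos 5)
Your proposal follows essentially the same route as the paper: induction on $k$, an application of Lemma \ref{k} to the sub-structure carrying $\widehat{\mathcal{A}^{\beta}_{k}}$, a Lemma \ref{r}-type star embedding for $Q_{k}$ relative to the already-embedded asteroid copy, and then a restriction of the remaining sets via Lemmas \ref{g} and \ref{b} to obtain a smooth $(\widehat{c}/2h,\,2h\widehat{\lambda},\,w^{*})$-structure for $\widehat{H^{k-1}}$ to which the induction hypothesis applies. The bookkeeping you flag as the remaining work (splitting a shared transitive set $S_{f}$ into equal-sized pieces so that the compressed vector for $\widehat{H^{k-1}}$ comes out right) is exactly what the paper carries out explicitly, so the proposal is correct in approach and substance.
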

\begin{proof}
The proof is by induction on $k$. For $k=0$ the statement is obvious since $\widehat{H^{0}}$ is the empty digraph. Suppose that $\chi = (S_{1},...,S_{\mid w \mid})$ is a smooth $(\widehat{c},\widehat{\lambda},w)$-structure in $T$ corresponding to $\widehat{H}^{k}$ under $(\widehat{H},\widehat{\theta})$, with $\widehat{\theta}=(h_{1},...,h_{h+6l})$. Let $\widehat{\theta}_{k} = (h_{k_{1}},...,h_{k_{s}})$ be the restriction of $\widehat{\theta}$ to $V(\widehat{H}^{k})$, where $s=$ $\mid$$ \widehat{H}^{k}$$ \mid$. Let $\widehat{\mathcal{A}^{\beta}_{k}} = \lbrace h_{k_{q_{1}}},...,h_{k_{q_{13}}} \rbrace$. Let $h_{k_{p_{0}}}$ be the center of $Q_{k}$ and $h_{k_{p_{1}}},...,h_{k_{p_{q}}}$ be its leafs for some integer $q>0$. Let $D_{i} = \lbrace v \in \bigcup_{j=1}^{\mid w \mid}S_{j};$ $ \xi(v) = q_{i} \rbrace$ for $i=1,...,13$. Assume without loss of generality that $\mathcal{A}^{\beta}_{k}$ is a left $\beta_{1}$$-$asteroid. Then there exist $ 1 \leq y \leq \mid$$w$$\mid$, and there exist $ 1 \leq b \leq \mid$$w$$\mid$, such that $1 \leq y-3 < y+2<b<b+2\leq $ $\mid$$w$$\mid$ and $D_{1} = S_{y-3}$, $D_{2} = S_{y-2}$, $D_{3} = S_{y-1}$, $D_{4} = S_{y}$, $D_{5} = S^{1}_{y+1}$, $D_{6} = S^{2}_{y+1}$, $D_{7} = S^{3}_{y+1}$, $D_{8} = S^{4}_{y+1}$, $D_{9} = S^{5}_{y+1}$, $D_{10} = S_{y+2}$, $D_{11} = S_{b}$, $D_{12} = S_{b+1}$, $D_{13} = S_{b+2}$ with $w(y-3)=w(y-2)=w(y)=w(y+2)=w(b) =w(b+2)= 0$ and $w(y-1)=w(y+1) =w(b+1)= 1$. Then $\tilde{\chi}=(S_{y-3},...,S_{y+2},S_{b},S_{b+1},S_{b+2})$ is a  smooth $(\widehat{c},\widehat{\lambda},\tilde{w})$-structure in $T$ corresponding to $\widehat{\mathcal{A}^{\beta}_{k}}$ under $(h_{q_{1}},...,h_{q_{13}})$, with $\tilde{w}=(0,0,1,0,1,0,0,1,0)$. Since $\widehat{\lambda} <\frac{1}{70}$, and since we can assume that $\epsilon < min \lbrace log_{\frac{\widehat{c}}{4}}(\frac{1}{2}), log_{\frac{\widehat{c}}{8}}(1-\frac{\widehat{c}}{10})\rbrace$, then by Lemma \ref{k}, there exist vertices $x_{1},...,x_{13}$ such that $x_{i} \in D_{i}$ for $i=1,...,13$ and $T$$\mid$$\lbrace x_{1},...,x_{13}\rbrace$ contains a copy of $\widehat{H}^{k}$$\mid$$V(\widehat{\mathcal{A}^{\beta}_{k}})$ where $(x_{1},...,x_{13})$ is its forest ordering (that is $\widehat{\mathcal{A}^{\beta}_{k}}$ is well-contained in $\tilde{\chi}$).   
For all $ 0 \leq i \leq q$, let $R_{i} = \lbrace v \in \bigcup_{j=1}^{\mid w \mid}S_{j};$ $ \xi(v) = p_{i} \rbrace$ and let $R_{i}^{*} = \bigcap_{x\in V(W)}R_{i,x}$.
Then $\exists m \in \lbrace 1,...,\mid$$w$$\mid \rbrace \backslash \lbrace y-3,y-2,y,y+2,b,b+2 \rbrace$, with $w(m)=0$ and $\exists f \in \lbrace 1,...,\mid$$w$$\mid \rbrace \backslash \lbrace y-1,y+1,b+1 \rbrace$ with $w(f)=1$, such that $R_{0} = S_{m}$ and $\forall 1 \leq i \leq q$, $R_{i} \subseteq S_{f}$. Then by Lemma \ref{g}, $\mid$$R_{0}^{*}$$\mid$ $\geq (1-13\widehat{\lambda})\mid$$R_{0}$$\mid$ $\geq \frac{\mid R_{0}\mid}{2}$ $\geq\frac{\widehat{c}}{2}n$ since $\widehat{\lambda} \leq \frac{1}{26}$, and $\mid$$ R_{i}^{*}$$\mid$ $ \geq \frac{1-13h\widehat{\lambda}}{h}\mid$$ S_{f}$$ \mid$ $\geq \frac{\widehat{c}}{2h}tr(T)$ since $\widehat{\lambda} \leq \frac{1}{26h}$. Since we can assume that $\epsilon < log_{\frac{\widehat{c}}{4h}}(1-\frac{\widehat{c}}{2h})$, then by Lemma \ref{r} there exists vertices $r_{0},r_{1},...,r_{q}$ such that $r_{i} \in R_{i}^{*}$ for $i=0,1,...,q$ and \\
$\ast$ $r_{1},...,r_{q}$ are all adjacent from $r_{0}$ if $m>f$.\\
$\ast$ $r_{1},...,r_{q}$ are all adjacent to $r_{0}$ if $m<f$.\\
So $T$$\mid$$\lbrace x_{1},...,x_{13},r_{0},r_{1},...,r_{q} \rbrace$ contains a copy of $\widehat{H^{k}}$$\mid$$(V(\widehat{\mathcal{A}^{\beta}_{k}})\cup V(Q_{k}))$. Denote this copy by $Y$.\\    
For all $ i \in \lbrace 1,...,\mid$$w$$\mid \rbrace \backslash \lbrace y-3,...,y+2,b,b+1,b+2,m,f \rbrace$, let $S_{i}^{*} = \displaystyle{\bigcap_{x\in V(Y)}S_{i,x}}$.  Then by Lemma \ref{g}, $\mid$$S_{i}^{*}$$\mid$ $\geq (1-\mid$$Y$$\mid\widehat{\lambda})\mid$$S_{i}$$\mid$ $\geq (1-(h+6)\widehat{\lambda})\mid$$S_{i}$$\mid$ $\geq \frac{\mid S_{i}\mid}{2h}$ since $\widehat{\lambda} \leq \frac{2h-1}{2h(h+6)}$.
Write $\mathcal{H} = \lbrace 1,...,s \rbrace \backslash \lbrace q_{1},...,q_{13},p_{0},...,p_{q} \rbrace$. If $\lbrace v\in S_{f}: \xi(v) \in \mathcal{H} \rbrace \neq \phi$, then define $J_{f} = \lbrace \eta \in \mathcal{H}: \exists v \in S_{f}$ and $\xi(v)= \eta \rbrace$. Now for all $ \eta \in J_{f}$, let $S_{f}^{*\eta}= \lbrace v \in S_{f}: \xi(v)=\eta$ and $v \in \displaystyle{\bigcap_{x\in V(Y)\backslash\lbrace r_{1},...,r_{q} \rbrace}S_{f,x}} \rbrace$. Then by Lemma \ref{g}, for all $ \eta \in J_{f}$, we have $\mid$$S_{f}^{*\eta}$$\mid$ $ \geq \frac{1-14h\widehat{\lambda}}{h}\mid $$S_{f}$$\mid $ $\geq \frac{\mid S_{f}\mid}{2h}$ since $\widehat{\lambda} \leq \frac{1}{28h}$. Now for all $ \eta \in J_{f}$, select arbitrary $\lceil \frac{\mid S_{f}\mid}{2h}\rceil$ vertices of $S_{f}^{*\eta}$, and denote the union of these $\mid$$J_{f}$$\mid$ sets by $S_{f}^{*}$.   
So we have defined $t$ sets $S_{1}^{*},...,S^{*}_{t}$, where $t = \mid$$w$$\mid$ $-10$ if $S_{f}^{*}$ is defined, and $t = \mid$$w$$\mid$ $-11$ if $S_{f}^{*}$ is not defined. We have $\mid$$S_{i}^{*}$$\mid$ $\geq \frac{\widehat{c}}{2h}tr(T)$ for every defined $S_{i}^{*}$ with $w(i) = 1$, and $\mid$$S_{i}^{*}$$\mid$ $\geq \frac{\widehat{c}}{2h}n$ for every defined $S_{i}^{*}$ with $w(i) = 0$. Now Lemma \ref{b} implies that $\chi^{*}=(S_{1}^{*},...,S^{*}_{t})$ form a smooth $(\frac{\widehat{c}}{2h},2h\widehat{\lambda},w^{*})$-structure of $T$ corresponding to $\widehat{H}^{k-1}$ under  $(\widehat{H},\widehat{\theta})$, where $\frac{\widehat{c}}{2h}= \frac{c}{(2h)^{l-(k-1)}}, 2h\widehat{\lambda}=(2h)^{l-(k-1)}\lambda$, and $w^{*}$ is an appropriate $\lbrace 0,1 \rbrace$-vector.
Now take $\epsilon_{k} < min \lbrace \epsilon_{k-1}, log_{\frac{\widehat{c}}{4}}(\frac{1}{2}), log_{\frac{\widehat{c}}{8}}(1-\frac{\widehat{c}}{16}), log_{\frac{\widehat{c}}{4h}}(1-\frac{\widehat{c}}{2h}) \rbrace$. So by induction hypothesis $\widehat{H}^{k-1}$ is well-contained in $\chi^{*}$. Now by merging the well-contained copy of $\widehat{H}^{k-1}$ and $Y$ we get a well-contained copy of $\widehat{H}^{k}$. $\hfill{\square}$ 
\end{proof}\vspace{4mm}

In the following corollary we introduce a rule that uses the corresponding digraph of an asterism $H$, to find $H$ as an induced copy in $T$:

\begin{corollary} \label{n} 
Let $H$ be a regular asterism under an ordering $\theta$ of its vertices. Let $\mathcal{A}^{\beta}_{1},...,\mathcal{A}^{\beta}_{l}$ be the $\beta$-asteroids of $H$ under $\theta$, and let $Q_{1},...,Q_{l}$ be the stars of $H$$\mid$$(V(H)\backslash \bigcup_{i=1}^{l}V(\mathcal{A}^{\beta}_{i}))$ under $\overline{\theta}$. Let $ \lambda > 0$ ($\lambda$ is small enough), $c > 0$ be constants, and let $w$ be a $\lbrace 0,1 \rbrace$$-$vector. Suppose that  $\chi = (S_{1},...,S_{\mid w \mid})$ is a smooth $(c,\lambda ,w)$-structure of an $\epsilon$$-$critical tournament $T$ ($\epsilon$ is small enough) corresponding to $\widehat{H}$ under $\widehat{\theta}$. Then $T$ contains $H$.
\end{corollary}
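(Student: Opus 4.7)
The plan is to apply Lemma \ref{d} with $k=l$ to get the whole corresponding digraph $\widehat{H}$ well-contained in $\chi$, and then to extract an actual copy of the asterism $H$ from the copy of $\widehat{H}$ by carefully analyzing, for each $\beta$-asteroid, the orientations in $T$ of the three arcs that were deleted when passing from $\mathcal{A}^{\beta}_i$ to $\widehat{\mathcal{A}^{\beta}_i}$. First I would check that the hypotheses of Lemma \ref{d} are met: $\lambda$ is assumed small enough so that $(2h)^{l}\lambda < \frac{1}{(4h)^{h+4}}$ (say), the vector $w$ and the smooth structure correspond to $\widehat{H}$ under $\widehat{\theta}$ by hypothesis, and $\epsilon$ can be shrunk below $\epsilon_l$. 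Thus there is an injective digraph homomorphism $f:V(\widehat{H})\to V(T)$ respecting $\widehat{\theta}$, i.e.\ $\xi(f(u_j))=j$ for all $j$.

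The core step is to produce, from this copy of $\widehat{H}$, an induced subtournament of $T$ isomorphic to $H$. I would treat the $\beta$-asteroids independently. Fix $1\le i \le l$ and assume first that $\mathcal{A}^{\beta}_i$ is a left $\beta_1$-asteroid; the three arcs of $\mathcal{A}^{\beta}_i$ not present in $\widehat{\mathcal{A}^{\beta}_i}$ are between the pairs $\{v_{s_i+2},v_{q_i-1}\}$, $\{v_{s_i+1},v_{q_i}\}$, $\{v_{s_i+3},v_{s_i+4}\}$, so in $T$ these pairs carry some unknown orientation. I would enumerate the $2^3$ possible orientation patterns and, using the specific adjacency relations between the original seven vertices of $\mathcal{A}^{\beta}_i$ and the six extra vertices $m_i,g_i,x_i,w_i,r_i,y_i$ listed in the construction of the mutant $\beta$-asteroid, show that in each case one can select seven vertices from $f(V(\widehat{\mathcal{A}^{\beta}_i}))$ forming in $T$ an induced copy of $\mathcal{A}^{\beta}_i$. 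Concretely: if the three free arcs are oriented as in $\mathcal{A}^{\beta}_i$, the images of the seven original vertices already form the asteroid; otherwise the extra vertices $x_i,w_i,r_i,y_i$ were built precisely to substitute for those vertices whose outgoing/incoming arcs are incorrect, and the rearrangement that realises the substitution is exactly operation $1$ on the five leftmost vertices (respectively operation $2$ for right $\beta$-asteroids). The analogous analyses for the other three types of mutant $\beta$-asteroids follow the same template, using in each case the corresponding permutation from $\Theta$.

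Having extracted, for every $i$, a copy $\mathcal{A}^{\beta,*}_i$ of $\mathcal{A}^{\beta}_i$ inside $f(V(\widehat{\mathcal{A}^{\beta}_i}))$, I would next put back the stars. Each star $Q_j$ of $H|X$ under $\overline{\theta}$ embeds into $\chi$ via $f$ with all its arcs preserved (since $\widehat{H}|V(Q_j)=H|V(Q_j)$ and no arcs incident to $Q_j$ were deleted); moreover by the definition of well-containedness the values of $\xi$ ensure that the relative ordering of the $Q_j$ with respect to the $\mathcal{A}^{\beta,*}_i$ matches that of $Q_j$ with respect to $\mathcal{A}^{\beta}_i$ in $H$ under some ordering $\theta'\in\Theta$. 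The between-group arcs of $H$ under $\theta'$ are all forward arcs, and the smoothness plus the forest-ordering of $\widehat{\theta}$ guarantees that the corresponding arcs in $T$ are oriented the same way, since these are precisely the arcs of $\widehat{H}$ that were not deleted in forming $\widehat{\mathcal{A}^{\beta}_i}$, hence are preserved by $f$. Thus the union $\bigcup_i V(\mathcal{A}^{\beta,*}_i)\cup\bigcup_j f(V(Q_j))$ induces in $T$ a tournament isomorphic to $H$.

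The main obstacle I expect is the case analysis in the middle paragraph: one must verify for each of the four types of mutant $\beta$-asteroid and for each of the eight possible orientations of the three free arcs that some seven-vertex subset of the thirteen images actually realises $\mathcal{A}^{\beta}_i$ as an \emph{induced} subtournament under the appropriate ordering (original or after operation $1$/$2$). This is a routine but lengthy check that relies on the very specific adjacencies encoded in the definitions of $\widehat{\mathcal{A}^{\beta}_i}$, and is the reason the six extra vertices $m_i,g_i,x_i,w_i,r_i,y_i$ and operations $1,2$ were introduced in the first place. All the surrounding ingredients -- well-containedness from Lemma \ref{d}, preservation of the galactic part $H|X$, and consistency of between-block orientations -- are essentially bookkeeping once this local analysis is done.
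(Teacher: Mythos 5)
Your proposal matches the paper's argument: it applies Lemma \ref{d} with $k=l$ to well-contain $\widehat{H}$, then for each mutant $\beta$-asteroid examines the orientations of the three ``free'' pairs and discards six of the thirteen vertices to recover a $7$-vertex copy of $\mathcal{A}^{\beta}_i$ in either its forest ordering or the ordering given by operation $1$/$2$, yielding an ordering in $\Theta$ and hence a copy of $H$. The only (cosmetic) difference is that the paper organizes the local check as a four-way nested case analysis keyed to the first backward pair rather than enumerating all $2^3$ orientation patterns.
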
  
\begin{proof}
$\widehat{H}=\widehat{H}^{l}$ is well-contained in $\chi$ by the previous lemma when taking $k=l$. For all $ i \in [l]$, let $\widetilde{\mathcal{A}}_{i}^{\beta}= \lbrace c_{i},d_{i},b_{i},g_{i},n_{i},r_{i},p_{i},q_{i},o_{i},s_{i},f_{i},a_{i},t_{i} \rbrace$ be the copy of $\widehat{\mathcal{A}}_{i}^{\beta}$ in $T$, and let $\widetilde{Q_{i}}$ be the copy of $Q_{i}$ in $T$. Let $\alpha$ be the ordering of $A = \bigcup_{i=1}^{l}(V(\widetilde{\mathcal{A}}_{i}^{\beta})\cup V(\widetilde{Q}_{i}))$ according to their appearance in $(S_{1},...,S_{\mid w \mid})$ (that is for $v,v'\in A$, if $\xi (v)<\xi (v')$, then $v$ appears before $v'$ in $\alpha$). Let $ i \in [l]$ such that $\mathcal{A}_{i}^{\beta}$ is a left $\beta_{1}$-asteroid. If $p_{i} \leftarrow a_{i}$, then we remove $c_{i},r_{i},q_{i},o_{i},s_{i},t_{i}$ from $\alpha$. Otherwise, if $q_{i} \leftarrow s_{i}$, then we remove $d_{i},g_{i},n_{i},r_{i},p_{i},a_{i}$ from $\alpha$. Otherwise, if $r_{i} \leftarrow t_{i}$, then we remove $d_{i},g_{i},n_{i},p_{i},q_{i},a_{i}$ from $\alpha$. Otherwise, $p_{i} \rightarrow a_{i}$, $q_{i} \rightarrow s_{i}$ and $r_{i} \rightarrow t_{i}$, in this case we remove $c_{i},d_{i},b_{i},n_{i},o_{i},f_{i}$ from $\alpha$. Note that in the first three cases we obtain the cyclic ordering of the left $\beta_{1}$-asteroid and in the last case we obtain the forest ordering of the left $\beta_{1}$-asteroid. Let $ i \in [l]$ such that $\mathcal{A}_{i}^{\beta}$ is a left $\beta_{2}$-asteroid. If $p_{i} \leftarrow a_{i}$, then we remove $c_{i},r_{i},q_{i},o_{i},s_{i},f_{i}$ from $\alpha$. Otherwise, if $q_{i} \leftarrow s_{i}$, then we remove $d_{i},g_{i},n_{i},r_{i},p_{i},a_{i}$ from $\alpha$. Otherwise, if $r_{i} \leftarrow f_{i}$, then we remove $d_{i},g_{i},n_{i},p_{i},q_{i},a_{i}$ from $\alpha$. Otherwise, $p_{i} \rightarrow a_{i}$, $q_{i} \rightarrow s_{i}$ and $r_{i} \rightarrow f_{i}$, in this case we remove $c_{i},d_{i},b_{i},n_{i},o_{i},t_{i}$ from $\alpha$. Note that in the first three cases we obtain the cyclic ordering of the left $\beta_{2}$-asteroid and in the last case we obtain the forest ordering of the left $\beta_{2}$-asteroid. Let $  i \in [l]$ such that $\mathcal{A}_{i}^{\beta}$ is a right $\beta_{1}$-asteroid. If $d_{i} \leftarrow p_{i}$, then we remove $c_{i},g_{i},n_{i},r_{i},q_{i},t_{i}$ from $\alpha$. Otherwise, if $g_{i} \leftarrow r_{i}$, then we remove $d_{i},p_{i},q_{i},o_{i},s_{i},a_{i}$ from $\alpha$. Otherwise, if $c_{i} \leftarrow q_{i}$, then we remove $d_{i},r_{i},p_{i},o_{i},s_{i},a_{i}$ from $\alpha$. Otherwise, $d_{i} \rightarrow p_{i}$, $g_{i} \rightarrow r_{i}$ and $c_{i} \rightarrow q_{i}$, in this case we remove $b_{i},n_{i},o_{i},f_{i},a_{i},t_{i}$ from $\alpha$. Note that in the first three cases we obtain the cyclic ordering of the right $\beta_{1}$-asteroid and in the last case we obtain the forest ordering of the right $\beta_{1}$-asteroid. Let $ i \in [l]$ such that $\mathcal{A}_{i}^{\beta}$ is a right $\beta_{2}$-asteroid. If $d_{i} \leftarrow p_{i}$, then we remove $b_{i},g_{i},n_{i},r_{i},q_{i},t_{i}$ from $\alpha$. Otherwise, if $g_{i} \leftarrow r_{i}$, then we remove $d_{i},p_{i},q_{i},o_{i},s_{i},a_{i}$ from $\alpha$. Otherwise, if $b_{i} \leftarrow q_{i}$, then we remove $d_{i},r_{i},p_{i},o_{i},s_{i},a_{i}$ from $\alpha$. Otherwise, $d_{i} \rightarrow p_{i}$, $g_{i} \rightarrow r_{i}$ and $b_{i} \rightarrow q_{i}$, in this case we remove $c_{i},n_{i},o_{i},f_{i},a_{i},t_{i}$ from $\alpha$. Note that in the first three cases we obtain the cyclic ordering of the right $\beta_{2}$-asteroid and in the last case we obtain the forest ordering of the right $\beta_{2}$-asteroid.  We apply this rule  for all $ i \in [l]$. We obtain one of the orderings in $\Theta$ of $V(H)$. So $T$ contains $H$. $\hfill{\square}$  
\end{proof}
\vspace{3mm}\\
Now we are ready to prove Theorem \ref{j}:\vspace{3mm}\\
\noindent \sl {Proof of Theorem \ref{j}.} \upshape
Let $H$ be an asterism under an ordering $\theta$ of its vertices. We may assume that $H$ is a regular asterism since every asterism is a subtournament of a regular asterism. Let $\mathcal{A}^{\beta}_{1},...,\mathcal{A}^{\beta}_{l}$ be the $\beta$-asteroids of $H$ under $\theta$, and let $Q_{1},...,Q_{l}$ be the stars of $H$$\mid$$(V(H)\backslash \bigcup_{i=1}^{l}V(\mathcal{A}^{\beta}_{i}))$ under $\overline{\theta}$. Let $\epsilon > 0$ be small enough and let $\lambda > 0$ be small enough. Assume that $H$ does not satisfy $EHC$, then there exists an $H$-free $\epsilon$-critical tournament $T$. By Theorem \ref{i}, $T$ contains a smooth $(c,\lambda ,w)$-structure $(S_{1},...,S_{\mid w \mid})$ corresponding to $\widehat{H}$ under $\widehat{\theta}$ for some $c >0$ and appropriate $\lbrace 0,1 \rbrace$-vector $w$. Then by the previous corollary, $T$ contains $H$, a contradiction. This terminates the proof. $\hfill{\square}$
\begin{figure}[h]
	\centering
	\includegraphics[width=0.8\linewidth]{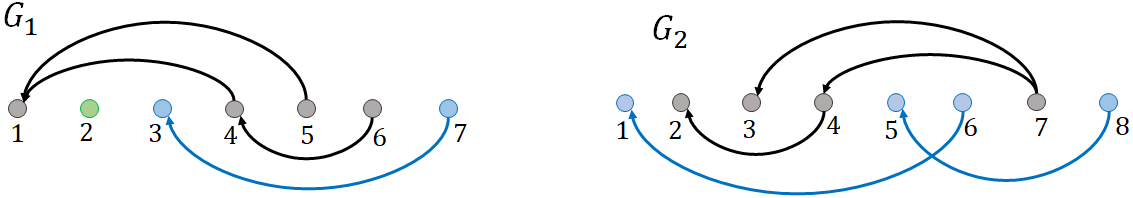}
	\caption{Subtournaments of asterisms. Arcs corresponding to four-paths, stars, and singletons are colored in black, blue, and green respectively.  All the nondrawn arcs are forward.}
	\label{fig:subasterisms}
\end{figure}

We have infinitely many tournaments that are subtournaments of asterisms and does not belong to a class proved to satisfy EHC (see Figure \ref{fig:subasterisms}). By the following corollary, such tournaments also satisfy EHC:
\begin{corollary}
Every subtournament of an asterism satisfies EHC.
\end{corollary}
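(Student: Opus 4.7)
The plan is to obtain this corollary as an immediate consequence of Theorem \ref{j} together with the hereditary nature of the Erd\"{o}s-Hajnal property, which was recorded in the introduction (citing \cite{ppp}): if a tournament $H$ has the Erd\"{o}s-Hajnal property, then every induced subtournament of $H$ has it as well.

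Concretely, let $H'$ be a subtournament of an asterism $H$. First I would invoke Theorem \ref{j} to conclude that $H$ itself satisfies EHC, so there exists $\epsilon(H) > 0$ such that every $H$-free tournament on $n$ vertices contains a transitive subtournament of size at least $n^{\epsilon(H)}$. Then I would apply the hereditary principle: since $H'$ is (isomorphic to) $H\mid X$ for some $X \subseteq V(H)$, the property transfers from $H$ down to $H'$, yielding a constant $\epsilon(H') > 0$ with the required guarantee on $H'$-free tournaments.

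There is essentially no obstacle here beyond bookkeeping about what ``subtournament'' means. In the conventions of this paper, ``$T$ contains $H$'' is defined through the induced subdigraph operation $T\mid X$, so ``subtournament'' coincides with ``induced subtournament'' and the hereditary statement applies verbatim. Thus the corollary reduces to a one-line deduction from Theorem \ref{j} and does not require a fresh structural argument about subtournaments of asterisms; in particular, we do not need to verify that every induced subtournament of an asterism is itself an asterism (which in general it is not, since the $\beta$-asteroid blocks can be broken by deleting vertices). The hereditary property bypasses this entirely.
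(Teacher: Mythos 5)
Your proposal is correct and matches the paper's own argument exactly: both deduce the corollary from Theorem \ref{j} combined with the hereditary nature of the Erd\"{o}s-Hajnal property. Your additional remark explaining why the hereditary property is needed (since an induced subtournament of an asterism need not itself be an asterism) is a sensible clarification but does not change the substance.
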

\begin{proof}
The result follows from Theorem \ref{j} and from the fact that the Erd\"{o}s-Hajnal property is a hereditary property. $\hfill{\square}$
\end{proof}
\section{Galaxies with Spiders}\label{secgalaxywithspiders}
\hspace{3.5mm}   Unfortunately the following is still a conjecture:
\begin{conjecture} \label{nebula}
Every nebula satisfies the Erd\"{o}s-Hajnal conjecture.
\end{conjecture}

In \cite{polll} Berger et al. proved Conjecture \ref{nebula} for every galaxy. In galaxies there are no middle stars, and the centers of the stars are not allowed to appear between the leaves of some star. Then in \cite{kg}, Choromanski confirmed Conjecture \ref{nebula} for every constellation. In constellations the condition concerning the centers of the stars in galaxies is relaxed, but middle stars are still not allowed. In this section we confirm Conjecture \ref{nebula} for every galaxy with spiders.
The family of galaxies with spiders contains infinitely many tournaments that are neither galaxies nor constellations. In galaxies with spiders there are "\textit{middle stars}" and there are "centers of stars between leaves of another stars" under some conditions that we will explain in the following subsection (this condition is different from that in constellations).  
The technique of the proof is different than that used in galaxies and constellations. Also it is different than that used  in asterisms. This technique does not depend on corresponding digraphs as in the above section. We will first define the infinite family of tournaments $-$ the family of so$-$called galaxies with spiders and we will introduce several technical definitions.
\subsection{Definitions}
\hspace{3.5mm} A tournament $\mathcal{S}$ on $n$ vertices with $V(\mathcal{S})= \lbrace 1,...,m,m+1,...,m+4,m+5,m+6,...,n\rbrace$ is a \textit{middle $spider_{1}$} (resp. \textit{middle $spider_{2}$}) if there exist an ordering $\theta = (1,...,m,m+1,...,m+4,m+5,m+6,...,n)$ of its vertices such that the set of backward arcs of $\mathcal{S}$ under $\theta$ is $\lbrace(m+4,m+1),(m+5,m+2)\rbrace \cup\lbrace(m+1,i): i=1,...,m \rbrace \cup \lbrace(i,m+5): i= m+6,...,n\rbrace$ (resp. $\lbrace(m+4,m+1),(m+5,m+2)\rbrace \cup\lbrace(m+5,i): i=1,...,m \rbrace \cup \lbrace(i,m+1): i= m+6,...,n\rbrace$) (see example in Figure \ref{fig:spider}). We write $\mathcal{S}= \lbrace 1,...,m,m+1,...,m+4,m+5,m+6,...,n\rbrace$ and we call $\theta$ a \textit{middle spider$_{1}$} (resp. \textit{middle spider$_{2}$}) \textit{ordering} of $\mathcal{S}$, $1,...,m,m+6,...,n$ the \textit{legs} of $\mathcal{S}$, $m+1,...,m+5$ the \textit{interoir vertices} of $\mathcal{S}$, $m+2,m+3,m+4$ the \textit{petals} of $\mathcal{S}$, and $m+1,m+5$ the \textit{centers} of $\mathcal{S}$. A \textit{middle spider} is a \textit{middle $spider_{1}$} or a \textit{middle $spider_{2}$}. A \textit{middle spider ordering} is a middle spider$_{1}$ ordering or a middle spider$_{2}$ ordering. A tournament $\mathcal{S}$ on $n$ vertices with $V(\mathcal{S})= \lbrace 1,...,n-4,...,n\rbrace$ is a \textit{right spider} if there exist an ordering $\theta = (1,...,n-4,...,n)$ of its vertices such that the set of backward arcs of $\mathcal{S}$ under $\theta$ is $\lbrace(n,n-3),(n-1,n-4)\rbrace \cup\lbrace(n,i): i\in X_{1} \rbrace \cup \lbrace(n-4,i): i\in X_{2}\rbrace$, where $X_{1}\cup X_{2}=\lbrace 1,...,n-5\rbrace$ and $X_{1}\cap X_{2}=\phi$ (see example in Figure \ref{fig:spider}). We write $\mathcal{S}= \lbrace 1,...,n-4,...,n\rbrace$ and we call $\theta$ a \textit{right spider ordering} of $\mathcal{S}$, $1,...,n-5$ the \textit{legs} of $\mathcal{S}$, $n-4,...,n$ the \textit{interoir vertices} of $\mathcal{S}$, $n-3,n-2,n-1$ the \textit{petals} of $\mathcal{S}$, and $n-4,n$ the \textit{centers} of $\mathcal{S}$.  A tournament $\mathcal{S}$ on $n$ vertices with $V(\mathcal{S})= \lbrace 1,2,3,4,5,...,n\rbrace$ is a \textit{left spider} if there exist an ordering $\theta = (1,2,3,4,5,...,n)$ of its vertices such that the set of backward arcs of $\mathcal{S}$ under $\theta$ is $\lbrace(4,1),(5,2)\rbrace \cup\lbrace(i,5): i\in X_{1} \rbrace \cup \lbrace(i,1): i\in X_{2}\rbrace$, where $X_{1}\cup X_{2}=\lbrace 6,...,n\rbrace$ and $X_{1}\cap X_{2}=\phi$ (see example in Figure \ref{fig:spider}). We write $\mathcal{S}= \lbrace 1,...,5,6,...,n\rbrace$ and we call $\theta$ a \textit{left spider ordering} of $\mathcal{S}$, $6,...,n$ the \textit{legs} of $\mathcal{S}$, $1,...,5$ the \textit{interoir vertices} of $\mathcal{S}$, $2,3,4$ the \textit{petals} of $\mathcal{S}$, and $1,5$ the \textit{centers} of $\mathcal{S}$. A \textit{spider} is a middle spider or a left spider or a right spider. A \textit{spider ordering} is a middle spider ordering or a left spider ordering or a right spider ordering.
\begin{figure}[h]
	\centering
	\includegraphics[width=0.8\linewidth]{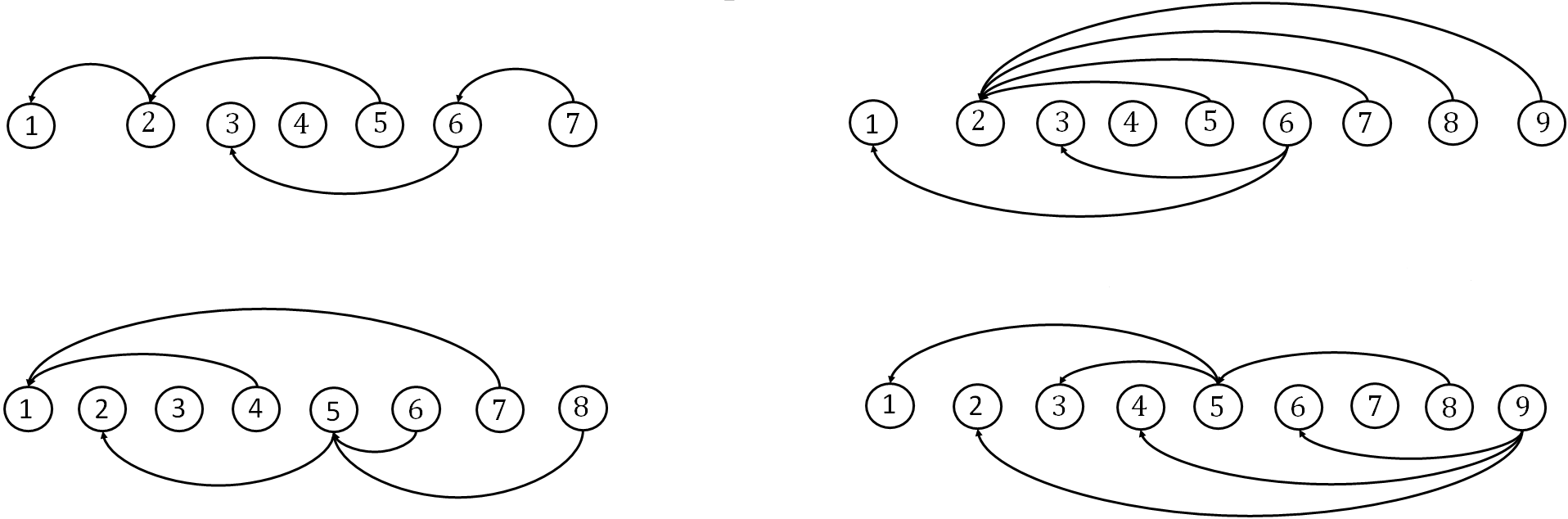}
	\caption{Middle spiders on the top, left spider and right spider on the bottom. All spiders are drawn under their spider ordering. All the nondrawn arcs are forward.}
	\label{fig:spider}
\end{figure}

Let $ \theta = (v_{1},...,v_{n}) $ be an ordering of the vertex set $V(T)$ of an $ n- $vertex tournament $T$. A \textit{spider} $\mathcal{S}=\lbrace v_{i_{1}},...,v_{i_{t}}\rbrace$ \textit{of $T$ under $\theta$} (where $i_{1}<...<i_{t}$) is a subtournament of $T$ induced by $\lbrace v_{i_{1}},...,v_{i_{t}}\rbrace$ such that $\mathcal{S}$ is a spider, $\mathcal{S}$ has the spider ordering $ (v_{i_{1}},...,v_{i_{t}})$ under $\theta$, and the interior vertices of $\mathcal{S}$ are consecutive under $\theta$.
 A \textit{triangle} $C =\lbrace v_{i_{1}},v_{i_{2}},v_{i_{3}}\rbrace$ \textit{of $T$ under $\theta$} is a transitive subtournament of $T$ induced by $\lbrace v_{i_{1}},v_{i_{2}},v_{i_{3}}\rbrace$ such that $(v_{i_{3}},v_{i_{2}},v_{i_{1}})$ is its transitive ordering (i.e $v_{i_{1}}\leftarrow v_{i_{2}}$ and $\lbrace v_{i_{1}},v_{i_{2}}\rbrace\leftarrow v_{i_{3}}$), and $i_{1}<i_{2}<i_{3}$. We can choose arbitrary any one of the vertices of $C$ to be the \textit{center of $C$}, and the other vertices will be called the \textit{leaves of $C$}. 

\vspace{3mm} Let $H$ be a tournament such that there exists an ordering $\theta$ of its vertices such that $V(H)$ is the disjoint union of $V(\mathcal{S}_{1}),...,V(\mathcal{S}_{l}),X$ where $\mathcal{S}_{1},...,\mathcal{S}_{l}$ are the spiders of $H$ under $\theta$, $H$$\mid$$X$ is a galaxy under $\overline{\theta}$ where $\overline{\theta}$ is the restriction of $\theta$ to $X$ (let $Q_{1},...,Q_{m}$ be the stars of $H$$\mid$$X$ under $\overline{\theta}$), no interior vertex of spider appears in the ordering $\theta$ between leaves of a star of $H$$\mid$$X$ under $\overline{\theta}$, no center of a star of $H$$\mid$$X$ under $\overline{\theta}$ appears in the ordering $\theta$ between legs of a spider of $H$ under $\theta$ incident to the same center of this spider, and for any given $1\leq i<j\leq l$ every vertex of $V(\mathcal{S}_{i})$ is before every vertex of $V(\mathcal{S}_{j})$ under $\theta$. We call this ordering a \textit{super galaxy ordering of $H$}.\\ The \textit{contracting graph} of $H$ under $\theta$ is the undirected graph with vertices $N_{1},...,N_{f}$, where $N_{1},...,N_{f_{1}}$ are the sets of legs of the spiders $\mathcal{S}_{1},...,\mathcal{S}_{l}$ incident to the same center, $N_{f_{1}+1},...,N_{f_{2}}$ are the sets of petals of the spiders $\mathcal{S}_{1},...,\mathcal{S}_{l}$, and $N_{f_{2}+1},...,N_{f}$ are the sets of leaves of the stars $Q_{1},...,Q_{m}$ respectively. Two vertices $N_{i}$ and $N_{j}$ are adjacent if there exist a point $v\in N_{i}$ such that $v$ is between the left point and the right point of $N_{j}$, or there exist a point $v\in N_{j}$ such that $v$ is between the left point and the right point of $N_{i}$. Note that the vertices corresponding to $N_{i}$ are called the \textit{points} of $N_{i}$ for $i=1,...,f$. For each connected component $C$ of the contracting graph of $H$ under $\theta$ denote by $\mathcal{Z}(C)$ the union of subsets of $V(H)$ correspoding to its vertices (i.e $\mathcal{Z}(C)= \lbrace v\in V(H); v$ is a point of $ N_{i}$ for some $N_{i}\in V(C)\rbrace$). Let $C_{1},...,C_{k}$ be the connected components of the contracting graph. Denote by $(M_{1},...,M_{k})$  the ordering of the sets $\mathcal{Z}(C_{1}),...,\mathcal{Z}(C_{k})$ such that for any given $1\leq i < j\leq k$ every vertex of $M_{i}$ is before every vertex of $M_{j}$. Notice that $P_{\theta}(H)=\lbrace M_{1},...,M_{k}\rbrace$ form a partition of $V(H)\backslash \mathcal{C}$, where $\mathcal{C}$ is the set of centers of $\mathcal{S}_{1},...,\mathcal{S}_{l},Q_{1},...,Q_{m}$.
   
\vspace{2mm} A tournament $T$ is a \textit{galaxy with spiders} if there exists a super galaxy ordering $\theta$ of its vertices satisfying the following: let $v$ and $v'$ be legs of distinct spiders $\mathcal{S}_{i}$ and $\mathcal{S}_{j}$, and suppose that $v$ is in some $M\in P_{\theta}(H)$. If there is a center of a star of $T$$\mid$$X$ under $\overline{\theta}$ lying between a center of $\mathcal{S}_{i}$ and a center of $\mathcal{S}_{j}$ under $\theta$ then $v' \notin M$. In this case we call $\theta$ a \textit{galaxy with spiders ordering of} $T$ (alternatively: $T$ is a \textit{galaxy with spiders under} $\theta$). If $T$$\mid$$X$ is a regular galaxy under $\overline{\theta}$, then $T$ is called a \textit{regular galaxy with spiders} (see Figure \ref{fig:gspider}). If $X=\phi$ then $T$ is called a \textit{clutter}.

\vspace{2.5mm}It can be clearly noticed that the family of galaxies with spiders is a larger family of tournaments than galaxies. So this generalizes the result in \cite{polll}.  Also it can be noticed that the family of galaxies with spiders contains infinitely
many tournaments that are neither galaxies nor constellations. That's because in galaxies with spiders we have middle stars, while in galaxies and constellations middle stars are not allowed, and because the relaxed condition concerning the centers of stars is different than that in constellations. In what follows we give examples of tournaments having middle stars and centers of stars between leaves of another stars, that belongs to the family of galaxies with spiders and are neither galaxies nor constellations. We will start by the smallest tournament with respect to the number of vertices, denoted by $H_1$, that belongs to the family of galaxies with spiders.\vspace{2.5mm}\\
- $H_1$ is a seven-vertex tournament. It is the smallest because all the tournaments having star ordering and  with at most six vertices are galaxies \cite{polll,bnmm}. $\{(6,3),(3,1),(7,2),(7,4)\}$ is the set of backward arcs of $H_1$ under the
ordering $\alpha_{1}=(1, ..., 7)$. Clearly one can notice that $H_1$  is a right spider under this ordering, and so $H_1$ is a galaxy with spiders (precisely, a clutter). The tournament $H_1$ drawn under the ordering $\alpha_{1}$  consists of a middle star $\{1,3,6\}$, a right star $\{2,4,7\}$, and a singleton. Notice that the center of the middle star appears in the ordering $\alpha_1$ between the leaves of the right star. Also one can check that $(1, ..., 7)$ is the unique ordering of the vertex set of  $H_1$ is which we have stars or middle stars (see Figure \ref{fig:gspider}).\vspace{2.5mm}\\
- $H_2$ is a nine-vertex tournament, such that $\{(3,1),(6,3),(7,4),(9,7),(8,2)\}$ is the set of backward arcs under $\alpha_{2}=(1,...,9)$. $H_2$ consists of two middle stars: $\{1,3,6\}$ and $\{4,7,9\}$, one left star: $\{2,8\}$, and a singleton $\{5\}$. In other words, $H_2$ consists of one middle spider: $\{1,3,4,5,6,7,9\}$ and one left star: $\{2,8\}$. One can check that $\alpha_{2}$ is the unique nebula ordering of $H_2$. So $H_2$ is a galaxy with spiders, and $H_2$ is neither a constellation nor a galaxy (see Figure \ref{fig:gspider}).\vspace{2.5mm}\\
- Take the tournament $H_3$ with vertex set $\{1,...,13\}$, such that $H_3$ is the disjoint union of the right star $\{1,4,7\}$, and the three left stars: $\{2,8\}$, $\{3,6,10,12\}$, and $\{9,11,13\}$ under the ordering $\alpha_{3}=(1,...,13)$ of its vertices. In other words $H_3$ consists of the middle spider $\{1,3,4,5,6,7,10,12\}$ and the two stars: $\{2,8\}$ and $\{9,11,13\}$. Clearly $\alpha_{3}$ is not a constellation ordering because the center and one of the leaves of the left star $\{9,11,13\}$ appears in the ordering $\alpha_{3}$ between the leaves of the left star $\{3,6,10,12\}$. Also one can check that $\alpha_{3}$ is the only nebula ordering of $H_3$. So $H_3$ is a galaxy with spiders, and neither a constellation nor a galaxy (see Figure \ref{fig:gspider}).\vspace{2.5mm}\\
- The connected components of the graph of backward arcs of the $15$-vertex  tournament $H_4$ under $\alpha_{4}=(1,...,15)$ are: the middle star  $\{2,5,8,11\}$ with center $5$, the right star $\{9,13,15\}$, and the two left stars $\{1,4,7,10\}$ and $\{6,12,14\}$. In other words $H_4$ consists of the left spider $\{1,...,5,7,8,10,11\}$ and the two stars: $\{9,13,15\}$ and $\{6,12,14\}$. Clearly $\alpha_{4}$ is not a constellation ordering. Also one can check that $\alpha_{4}$ is the only nebula ordering of $H_4$. So $H_4$ is a galaxy with spiders, and neither a constellation nor a galaxy (see Figure \ref{fig:gspider}).\vspace{2.5mm}\\
- $H_5$ is a galaxy with spiders, with vertex set $\{1,...,36\}$, such that under the ordering $\alpha_{5}:=(1,...,36)$, $H_5$ is the disjoint union of the four middle stars: $\{2,5,8\}$, $\{12,15,18\}$, $\{16,19,21,23\}$, and $\{24,30,31,34\}$, the three right star: $\{7,9,13\}$, $\{22,25,36\}$, and $\{27,29,32,35\}$,  the three left stars: $\{1,4,6,10\}$, $\{11,26,28\}$, and $\{14,20\}$, and the singletons $\{3,17,33\}$. Also $H_5$ can be viewed under $\alpha_{5}$ as the disjoint union of the the middle spider $\{12,15,...,19,21,23\}$, the left spider $\{1,...,6,8,10\}$, the right spider $\{24,27,29,...,35\}$, the two left stars: $\{11,26,28\}$ and $\{14,20\}$, and the two right stars: $\{7,9,13\}$ and $\{22,25,36\}$. Under $\alpha_{5}$ we have three middle stars and centers of stars appearing between leaves of stars. Moreover there exists no constellation ordering of $H_5$ (we leave this for the reader). We conclude that $H_5$ is a galaxy with spiders as $\alpha_{5}$ satisfies all the conditions of galaxies with spiders, and neither a constellation nor a galaxy (see Figure \ref{fig:gspider}).\\
\begin{figure}[h]
	\centering
	\includegraphics[width=0.9\linewidth]{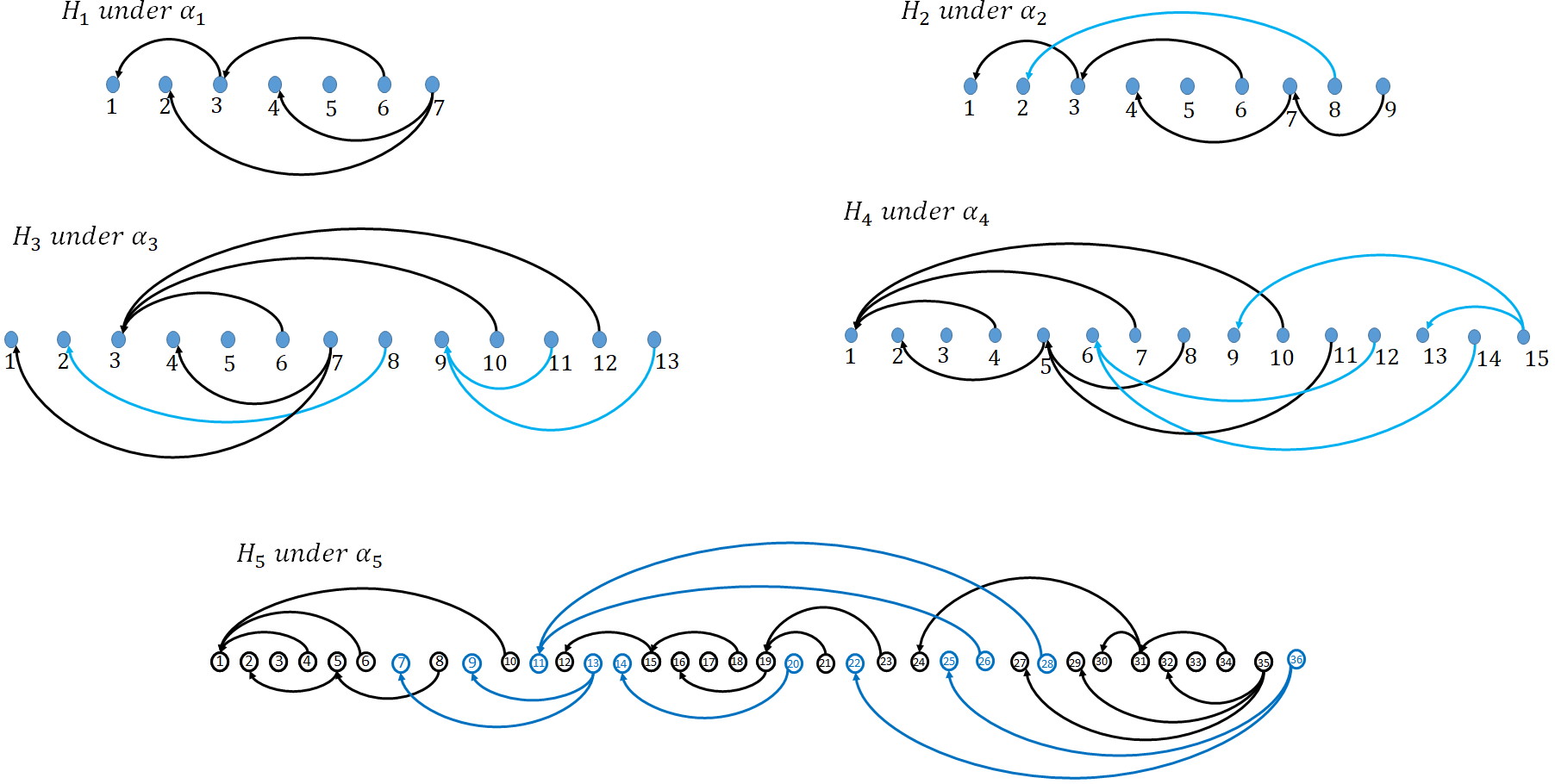}
	\caption{Five galaxies with spiders. Spiders are colored in black and the rest of the stars are colored in blue. All the nondrawn arcs are forward.}
	\label{fig:gspider}
\end{figure}    
\\The main result in this section is:
\begin{theorem} \label{p}
Every galaxy with spiders satisfy the Erd\"{o}s-Hajnal Conjecture.
\end{theorem}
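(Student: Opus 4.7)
The plan is to follow the same high-level scheme that worked for asterisms---Corollary \ref{c} plus an induction on the number of structural pieces of $H$---but, as the authors announce, \emph{without} introducing a corresponding digraph, working directly with a smooth structure that mirrors $H$ itself. First I would reduce to the regular case, since every galaxy with spiders is an induced subtournament of a regular one and the Erd\"os--Hajnal property is hereditary. Fix then a regular galaxy with spiders $H$ under a galaxy with spiders ordering $\theta$; let $\mathcal{S}_{1},\dots,\mathcal{S}_{l}$ be its spiders under $\theta$ and $Q_{1},\dots,Q_{m}$ be the stars of $H\mid X$ under $\overline{\theta}$; and associate to $(H,\theta)$ a $\{0,1\}$-vector $w=w(H,\theta)$ by assigning $0$ to every interior vertex of a spider, every center of a star, and every singleton vertex of $B(H,\theta)$, and $1$ to each maximal block of consecutive legs/leaves whose vertices all lie in the same set of the partition $P_{\theta}(H)$.

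Assuming for contradiction that $H$ does not satisfy EHC, I would take an $H$-free $\epsilon$-critical tournament $T$ for $\epsilon,\lambda$ small enough and, by Corollary \ref{c}, extract a smooth $(c,\lambda,w)$-structure $\chi=(S_{1},\dots,S_{|w|})$ of $T$ in which every transitive $|S_{i}|$ is divisible by a fixed large integer. The heart of the proof would then be a lemma in the spirit of Lemma \ref{d}: inductively, for each $k$, a suitably rescaled smooth structure corresponding to the first $k$ spiders and stars of $H$ contains them all as a well-placed subdigraph. The base case is trivial; for the inductive step one embeds the $k$-th piece into the current structure, then, using Lemma \ref{g} to count how many vertices of each remaining $S_{i}$ retain the right adjacencies to the vertices just used, shrinks every other $S_{i}$ to a ``good'' subset. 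Lemma \ref{b} guarantees that the shrunk sequence is still a smooth structure, with constants degraded by a controlled factor per inductive step, so the induction closes provided $\epsilon$ is small enough at the outset.

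Embedding a single star $Q_{k}$ would be done essentially as in the original galaxy proof of \cite{polll}: pick a center in the corresponding linear set and then, via Lemma \ref{r}, pick the required number of leaves inside the associated transitive set. To embed a single spider $\mathcal{S}_{k}$ one plays the same game with two linear sets for its centers, three linear sets for its petals, and one or two transitive sets for its legs. Using Lemma \ref{s} among the five interior linear sets, and Lemmas \ref{g} and \ref{f} against the transitive leg-sets, one produces a five-vertex induced subtournament realising the two internal backward arcs (those analogous to $(4,1)$ and $(5,2)$ of the standard spider, all other interior arcs being forward by smoothness); then the legs are chosen within the transitive set so that those belonging to $X_{1}$ receive a forward arc from the left center and a backward arc to the right center, and vice versa for $X_{2}$. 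This is possible because, by Lemma \ref{g}, a large fraction of any transitive set has the expected direction to any fixed small collection of interior vertices.

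The main obstacle I expect is the case in which legs of two distinct spiders $\mathcal{S}_{i}$ and $\mathcal{S}_{j}$ lie in the same block of $P_{\theta}(H)$, which, by the very definition of a galaxy with spiders ordering, happens precisely when no star center sits between the centers of $\mathcal{S}_{i}$ and $\mathcal{S}_{j}$. In that case a single transitive set of $\chi$ must host legs of both spiders, and the $X_{1}/X_{2}$-splits for each spider have to be arranged simultaneously inside this one set while still leaving a large enough fraction of vertices compatible with the required adjacencies to all four centers. Verifying that Lemma \ref{g} continues to leave enough room after all these simultaneous constraints, and that the relative order of centers of $\mathcal{S}_{i}$ and $\mathcal{S}_{j}$ along $\theta$ matches the forward arcs needed to splice the two spiders together, is the delicate bookkeeping step of the proof, and it is exactly why the definition of galaxy with spiders ordering restricts when such sharing of a $P_{\theta}(H)$-block is permitted. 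Once this is checked, the embeddings produced by the induction assemble into an induced copy of $H$ in $T$, contradicting $H$-freeness and finishing the proof.
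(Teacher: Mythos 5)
Your overall scaffolding (reduction to the regular case, Corollary \ref{c}, an induction that embeds one structural piece at a time while shrinking the remaining sets via Lemmas \ref{g} and \ref{b}) matches the paper's, but the step where you embed a single spider directly has a genuine gap, and it is precisely the point where the paper's proof does something you have not anticipated. Consider a middle spider: its center $m+1$ must beat all the earlier legs $1,\dots,m$ (backward arcs) and be beaten by the petal $m+4$ (backward arc), while each leg $i\le m$ must beat $m+4$ (forward arc). The only tool available for producing a vertex of a linear set with backward adjacencies into several transitive sets is Lemma \ref{r}, and it outputs the center together with the leg-representatives and the petal-representative \emph{simultaneously}; you cannot fix the legs first and then find the center (Lemma \ref{f} needs a set of linear size, not a single vertex, on the other side), nor fix the center first and then find legs it beats (smoothness only lower-bounds the majority direction, so the set of vertices beaten by a fixed center may be empty). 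Consequently the arcs between the chosen legs and the chosen petal $m+4$ (and symmetrically between petal $m+2$ and the later legs) are uncontrolled. Your assertion that these can be arranged ``by Lemma \ref{g}'' because a large fraction of a transitive set has ``the expected direction'' conflates the majority direction with the specific orientations the spider requires, and your identified ``main obstacle'' (legs of two spiders sharing a block of $P_{\theta}(H)$) is bookkeeping, not the crux.

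The paper resolves this with two ideas absent from your proposal: the \emph{mutant spider} (the spider with exactly those uncontrollable leg--petal arcs deleted), which is what Lemma \ref{o} actually embeds, and the \emph{cyclic ordering} (operation $\alpha$ applied to the five interior vertices), under which a spider decomposes into two stars plus a ``triangle'' (a reverse-transitive triple). After embedding all mutant spiders, one inspects the uncontrolled arcs: if all are forward the spider itself appears; if some arc is backward it closes a triangle, and one re-targets the embedding to the ordering $\theta'\in\Theta_{\theta}(H)$ in which that spider is read as a triangle plus two stars, the two stars being deferred to the final galaxy-embedding phase (Lemma \ref{l}). This forces the two-phase architecture of the paper's proof (all spiders first, then a global case analysis choosing $\theta'$, then all stars), rather than the single pass over pieces that you propose. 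Without the mutant/cyclic mechanism, or some substitute for it, the inductive step of your argument cannot be completed.
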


\begin{definition}\normalfont Let $G$ be a tournament with five vertices $v_{1},...,v_{5}$, and let $\theta_{1}=(v_{1},v_{2},v_{3},v_{4},v_{5})$ be an ordering of $V(G)$. Operation $\alpha$ is the permutation of the vertices $v_{1},...,v_{5}$ that converts the ordering $\theta_{1}$ to the ordering $\theta_{2}=(v_{4},v_{1},v_{3},v_{5},v_{2})$ of $V(G)$.
\end{definition}

Let $\mathcal{S}=\lbrace 1,...,n\rbrace$ be a spider and let $\theta$ be its spider ordering. Define the \textit{cyclic ordering of} $\mathcal{S}$ to be the ordering of $V(\mathcal{S})$ obtained from $\theta$ by performing operation $\alpha$ to the interoir vertices of $\mathcal{S}$ under $\theta$. Denote this ordering by $\vartheta$. In this case $V(\mathcal{S})$ is the disjoint union of $V(S_{1}),V(S_{2}),V(C)$, where $S_{1}$ and $S_{2}$ are the stars of $\mathcal{S}$ under $\vartheta$, and $C$ is the triangle of $\mathcal{S}$ under $\vartheta$.   
In Figure \ref{fig:cyclic} a ten-vertex middle spider $S= \lbrace 1,...,m,m+1,...,m+4,m+5,m+6,...,n\rbrace$ is drawn under its cyclic ordering (in this example we have $m=3$ and $n=10$).  
\begin{figure}[h]
	\centering
	\includegraphics[width=0.5\linewidth]{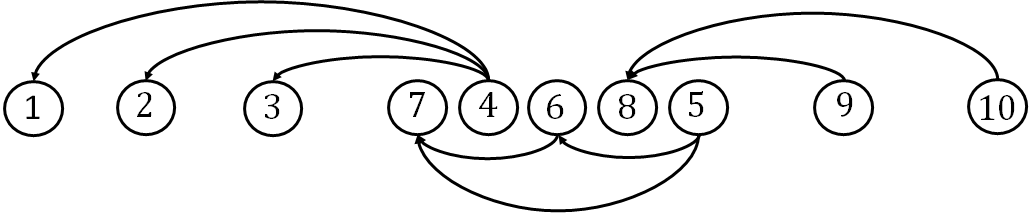}
	\caption{A middle spider$_{1}$ $S= \lbrace 1,2,3,4,5,6,7,8,9,10\rbrace$ drawn under its cyclic ordering. All the nondrawn arcs are forward.}
	\label{fig:cyclic}
\end{figure} 

Let $\mathcal{S}= \lbrace 1,...,n\rbrace$ be a spider. A \textit{mutant spider} $\tilde{\mathcal{S}}= \lbrace 1,...,n\rbrace$ is obtained from $\mathcal{S}$ by deleting some arcs as follows: If $\mathcal{S}= \lbrace 1,...,m,m+1,...,m+4,m+5,m+6,...,n\rbrace$ is a middle spider$_{1}$ (resp. middle spider$_{2}$) then  $\tilde{\mathcal{S}}$ is obtained from $\mathcal{S}$ by deleting the arcs $(i,m+4)$ (resp. $(i,m+2)$) for all $i\in\lbrace 1,...,m\rbrace$ and the arcs $(m+2,i)$ (resp. $(m+4,i)$) for all $i\in \lbrace m+6,...,n\rbrace$. If $\mathcal{S}= \lbrace 1,...,n-4,...,n\rbrace$ is a right spider then  $\tilde{\mathcal{S}}$ is obtained from $\mathcal{S}$ by deleting the arcs $(i,n-3)$ for all $i\in X_{1}$ and the arcs $(i,n-1)$ for all $i\in X_{2}$, where $X_{1}$ (resp. $X_{2}$) is the set of all outneighbors of $n$ (resp. $n-4$) in $\lbrace 1,...,n-5\rbrace$. If $\mathcal{S}= \lbrace 1,...,5,...,n\rbrace$ is a left spider then  $\tilde{\mathcal{S}}$ is obtained from $\mathcal{S}$ by deleting the arcs $(2,i)$ for all $i\in X_{1}$ and the arcs $(4,i)$ for all $i\in X_{2}$, where $X_{1}$ (resp. $X_{2}$) is the set of all inneighbors of $5$ (resp. $1$) in $\lbrace 6,...,n\rbrace$. A \textit{mutant clutter} $\tilde{H}$ under $\theta$ is obtained from a clutter $H$ under $\theta$ by transforming all spiders of $H$ under $\theta$ to mutant spiders.

\begin{remark} Unlike galaxies and constellations, depending only on the galaxy with spiders ordering $\theta$ of a galaxy with spiders $H$ in proving the conjecture for $H$ will not work (see subsection \ref{overview}). To this end, we studied the structure of these tournaments. In our proof we will use several crucial orderings for  a galaxy with spiders $H$. These orderings are outcomes of applying some vertex permutation on $\theta$.\end{remark}

  Let $H$ be a regular galaxy with spiders under an ordering $\theta = (v_{1},...,v_{\mid H \mid})$ of its vertices with $\mid$$H$$\mid$ $= h$. Let $\mathcal{S}_{1},...,\mathcal{S}_{l}$ be the spiders of $H$ under $\theta$. Define $\Theta_{\theta}(H) = \lbrace \theta^{'}$ an ordering of $V(H)$: $\theta^{'}$ is obtained from $\theta$ by performing operation $\alpha$ to the interior vertices of some spiders of $H$ under $\theta$$\rbrace$. Notice that $\mid$$\Theta$$\mid$ $= 2^{l}$. 
  \begin{remark} The spider is transformed into a triangle and two stars under applying operation $\alpha$ to its interior vertices, which gives nice structural backward arc configurations for galaxieas with spiders.\end{remark}
  
   Our proof will heavily depend on these special orderings, as in all possible cases, one of the orderings in  $\Theta_{\theta}(H)$ will save us (this will be discussed informally in subsection \ref{overview} and formally in sebsection \ref{proof}). In Figure \ref{fig:orderingsH5}, for every $\theta^{'}\in \Theta_{\theta}(H_{5}):=\{\theta ,\theta_{1},...,\theta_{7}\}$, we draw the tournament $H_5$ under $\theta^{'}$, where $\theta :=\alpha_{5}$.   
\begin{figure}[h]
	\centering
	\includegraphics[width=1\linewidth]{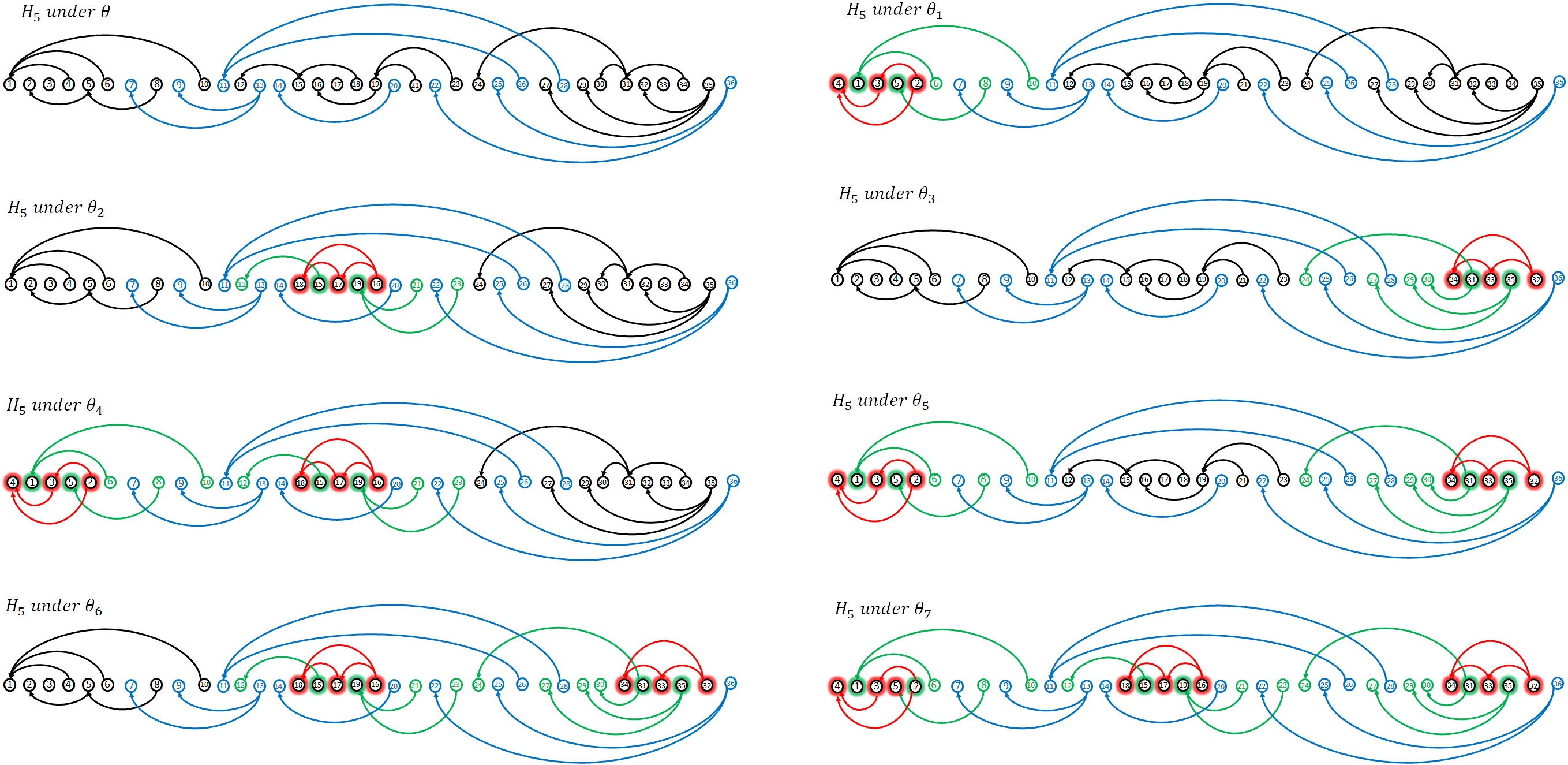}
	\caption{For all $\theta^{'}\in \Theta_{\theta}(H_5)$, $H_5$ is drawn under $\theta^{'}$. Spiders are colored in black, stars are colored in blue. Triangles and stars produced under performing operation $\alpha$ are colored in red and green respectively. All the non drawn arcs are forward.}
	\label{fig:orderingsH5}
\end{figure}

In what follows we introduce the notion of $\{s,t\}$-vectors and the sequence of associated structures that will be crucial in our latter analysis. $\{s,t\}$-vectors are marker vectors that plays a crucial role in determining which ordering $\theta^{'}\in \Theta_{\theta}(H)$ will save use, where $s$ and $t$ stands for spiders and triangles respectively. Then for each ordering $\theta^{'}$ of $H$ we will introduce a sequence of associated structures. In the proof we will prove that if a tournament $T$ does not contain a transitive subtournament with polynomial size, then it contains a copy of $H$ under $\theta^{'}$, for some $\theta^{'}\in \Theta_{\theta}(H)$. The method is to start with a smooth $(c,\lambda ,w)$-structure, with $w=(0,1,0,1,...,0,1)$ and $\mid$$w$$\mid$ is large enough. Then depending on the outcomes, we will be able to extract the ordering $\theta^{'}\in \Theta_{\theta}(H)$ and then an associated smooth $(c',\lambda^{'},w')$-structure from the sequence of associated structures corresponding to $H$ under $\theta^{'}$ that will serve us.   \vspace{2mm}

 Let $H$ be a regular galaxy with spiders under an ordering $\theta = (v_{1},...,v_{h})$ of its vertices with $\mid$$H$$\mid$ $= h$. Let $\theta^{'}= (u_{1},...,u_{h})\in \Theta_{\theta}(H)$ and denote by $(\mathcal{K}_{1},...,\mathcal{K}_{l})$ the ordering of  the spiders and triangles of $H$ under $\theta^{'}$ such that for any given $1\leq i < j\leq l$ every vertex of $\mathcal{K}_{i}$ is before every vertex of $\mathcal{K}_{j}$. Define $g^{H,\theta^{'}}$ to be the $\lbrace \mathsf{s},\mathsf{t} \rbrace$$-$vector such that for $i=1,...,l$, $g^{H,\theta^{'}}(i) = \mathsf{s}$ if and only if $\mathcal{K}_{i}$ is a spider of $H$ under $\theta^{'}$ and $g^{H,\theta^{'}}(i) = \mathsf{t}$ otherwise. We say that an $\lbrace \mathsf{s},\mathsf{t} \rbrace$$-$vector $g$ \textit{corresponds to $H$ under $\theta^{'}$} if $g=g^{H,\theta^{'}}$.\vspace{2.5mm}

Now we extend the definition of corresponding structures of a tournament under an ordering of its vertices (defined in \cite{polll}) to a sequence of associated structures under partitioning (corresponding structure is one of the associated structures under partitioning), as follows:\vspace{1.5mm}\\
  Let $\mathcal{C}$ be any set of choosen centers of all the triangles, stars, and spiders (we have many choices of $\mathcal{C}$). Let ${}^{\mathcal{C}}s^{H,\theta^{'}}$ be a $\lbrace 0,1 \rbrace$$-$vector such that ${}^{\mathcal{C}}s^{H,\theta^{'}}(i) = 0$ if and only if $u_{i}\in \mathcal{C}$. Let $\mathcal{K}_{i_{1}},...,\mathcal{K}_{i_{t}}$ be the triangles of $H$ under $\theta^{'}$ and let $\theta^{'*}$ be the restriction of $\theta^{'}$ to $V=V(H)\backslash\bigcup_{j=1}^{t}V(\mathcal{K}_{i_{j}})$ (notice that $H$$\mid$$V$ is a galaxy with spiders under $\theta^{'*}$). Let $P_{\theta}(H)=\lbrace M_{1},...,M_{k}\rbrace$ and let $P_{\theta^{'*}}(H$$\mid$$V)=\lbrace N_{1},...,N_{z_{1}}\rbrace$. Let $U_{1},...,U_{z_{2}}$ denote the singleton sets containing the leaves of $\mathcal{K}_{i_{1}},...,\mathcal{K}_{i_{t}}$ (notice that $z_{2}=2i_{t}$). Denote the sets $N_{1},...,N_{z_{1}},U_{1},...,U_{z_{2}}$ as $\lbrace \mathcal{R}_{1},...,\mathcal{R}_{r}\rbrace$ with $r=z_{1}+z_{2}$ such that for any given $1\leq i < j\leq r$ every vertex of $\mathcal{R}_{i}$ is before every vertex of $\mathcal{R}_{j}$. Define $P_{\theta^{'}}(H)=\lbrace \mathcal{R}_{1},...,\mathcal{R}_{r}\rbrace$. Notice that if $\theta^{'}=\theta$ then $P_{\theta^{'}}(H)=\lbrace M_{1},...,M_{k}\rbrace$. Let $\mathcal{I}$ be the set of all possible partitions $I=\lbrace I_{1},...,I_{t_{I}}\rbrace \in \mathcal{I}$ of $\lbrace 1,...,r\rbrace$ such that $\forall 1\leq j\leq t_{I}$, the vertices of  $\bigcup_{i\in I_{j}}\mathcal{R}_{i}$ are consecutive under $\theta^{'}$. Let $I=\lbrace I_{1},...,I_{t_{I}}\rbrace \in \mathcal{I}$. Let $P_{I}=\lbrace \bigcup_{i\in I_{j}}\mathcal{R}_{i}: j=1,...,t_{I}\rbrace$. Define $\mathcal{P}=\lbrace P_{I}: I\in \mathcal{I}\rbrace$. \\  Let $P_{I}\in \mathcal{P}$. Let ${}^{\mathcal{C}}s^{H,\theta^{'}}_{P_{I}}$ be the $\lbrace 0,1\rbrace$$-$vector obtained from ${}^{\mathcal{C}}s^{H,\theta^{'}}$ by replacing every subsequence of consecutive $1's$ corresponding to vertices that belong to the same set in $P_{I}$ by single $1$. Let $\delta^{{}^{\mathcal{C}}s^{H,\theta^{'}}_{P_{I}}}:$ $\lbrace i: {}^{\mathcal{C}}s^{H,\theta^{'}}_{P_{I}} = 1 \rbrace \rightarrow \mathbb{N}$ be a function that assigns to every nonzero entry of ${}^{\mathcal{C}}s^{H,\theta^{'}}_{P_{I}}$ the number of consecutive $1'$s of ${}^{\mathcal{C}}s^{H,\theta^{'}}$ replaced by that entry of ${}^{\mathcal{C}}s^{H,\theta^{'}}_{P_{I}}$. We say that a smooth $(c,\lambda ,w)$$-$structure $\chi$ is \textit{galaxy with spiders-associated to $H$ under} $\theta^{'}$ if $w = {}^{\mathcal{C}}s^{H,\theta^{'}}_{P_{I}}$ for some $P_{I}\in \mathcal{P}$. If $\theta^{'}=\theta$ and $P_{I}=P_{\theta}(H)$ (i.e $w = {}^{\mathcal{C}}s^{H,\theta}_{P_{\theta}(H)}$) then $\chi$ is called \textit{galaxy with spiders-correlated to $H$ under} $\theta$ (in this case we only give $\chi$  another special name to distinguish it from the other smooth structures that are galaxy with spiders-associated to $H$ under $\theta^{'}$). In other words, $\chi_{\theta^{'}}(H):=\{\chi_{i} : \chi_{i}$ is a smooth $(c_{i},\lambda_{i},w_{i})$-structure such that $w_i = {}^{\mathcal{C}}s^{H,\theta^{'}}_{P_{I}}$ for some $P_{I}\in \mathcal{P}\}$ is the set of smooth structures that are \textit{galaxy with spiders-associated  to $H$ under} $\theta^{'}$.

Let $\theta^{'} = (u_{1},...,u_{h})\in \Theta$ and let $(S_{1},...,S_{\mid w \mid})$ be a smooth $(c,\lambda ,w)$$-$structure which is galaxy with spiders-associated to $H$ under the ordering $\theta^{'} $, and let $i_{r}$ be such that $w(i_{r}) = 1$. Assume that $S_{i_{r}} = \lbrace s^{1}_{i_{r}},...,s_{i_{r}}^{\mid S_{i_{r}} \mid} \rbrace$ and $(s^{1}_{i_{r}},...,s_{i_{r}}^{\mid S_{i_{r}} \mid})$ is a transitive ordering. Write $m(i_{r}) = \lfloor\frac{\mid S_{i_{r}} \mid}{\delta^{w}(i_{r})}\rfloor$.\\ Denote $S^{j}_{i_{r}} = \lbrace s^{(j-1)m(i_{r})+1}_{i_{r}},...,s_{i_{r}}^{jm(i_{r})} \rbrace$ for $j \in \lbrace 1,...,\delta^{w}(i_{r}) \rbrace$. For every $v \in S^{j}_{i_{r}}$ denote $\xi(v) = (\mid$$\lbrace k < i_{r}: w(k) = 0 \rbrace$$\mid$ $+$ $\displaystyle{\sum_{k < i_{r}: w(k) = 1}\delta^{w}(k) })$ $+$ $j$. For every $v \in S_{i_{r}}$ such that $w(i_{r}) = 0$ denote $\xi(v) = (\mid$$\lbrace k < i_{r}: w(k) = 0 \rbrace$$\mid$ $+$ $\displaystyle{\sum_{k < i_{r}: w(k) = 1}\delta^{w}(k) })$ $+$ $1$. We say that $H$ is \textit{well-contained in} $(S_{1},...,S_{\mid w \mid})$ which is galaxy with spiders-associated to $H$ under $\theta^{'} $ if there is an injective homomorphism $f$ of $H$ into $T$$\mid$$\bigcup_{i = 1}^{\mid w \mid}S_{i}$ such that $\xi(f(u_{j})) = j$ for every $j \in \lbrace 1,...,h \rbrace$.
\vspace{4.5mm}

Let $H$ be a galaxy with spiders under an ordering $\theta$ of its vertices. Let $\mathcal{S}_{1},...,\mathcal{S}_{l}$ be the spiders of $H$ under $\theta$, and let $Q_{1},...,Q_{m}$ be the stars of $H$$\mid$$X$ under $\overline{\theta}$, where $X=V(H)\backslash \bigcup_{i=1}^{l}V(\mathcal{S}_{i})$ and $\overline{\theta}$ is the restriction of $\theta$ to $X$. Let $\chi$ be a smooth $(c,\lambda ,w)$$-$structure which is galaxy with spiders-associated to $H$ under $\theta $.  If $X=\phi$ then $H$ is a clutter under $\theta$ and $\chi$ is called in this case \textit{clutter-associated to $H$ under $\theta$}. If $l=0$ then $H$ is a galaxy under $\theta$ and $\chi$ is called in this case \textit{galaxy-associated to $H$ under $\theta$}.

Let $K$ be a clutter under an ordering $\theta = (v_{1},...,v_{r})$ of its vertices and let $\chi =(S_{1},...,S_{\mid w \mid})$ be a smooth $(c,\lambda ,w)$$-$structure which is clutter-associated to $K$ under $\theta$. We say that the mutant clutter $\tilde{K}$ corresponding to $K$ under $\theta $ is \textit{well-contained in} $\chi$ if there is an injective homomorphism $f$ of $\tilde{K}$ into $T$$\mid$$\bigcup_{i = 1}^{\mid w \mid}S_{i}$ such that $\xi(f(v_{j})) = j$ for every $j \in \lbrace 1,...,r \rbrace$. 
\begin{remark}
In associated structures, the $1$'s that belong to the same set in chosen partition are replaced by a single $1$.
\end{remark}
\subsection{An overwiew of the proof}\label{overview}
\hspace{3.5mm} The proof of Theorem \ref{p} is very technical and a bit hard. Thus before
going into details we would like to give an intuition how the proof works, and explain main and most important steps of the proof. Moreover we will explain why the ideas/techniques used to prove the Erd\"{o}s-Hajnal conjecture for galaxies and constellations failed  to succeed with galaxies with spiders, and what are the new ideas and techniques that allows the breakthrough.\\
The proof is by contradiction. By taking $\epsilon >0$ small enough, we can assume the existence of a smooth $(c,\lambda ,w)$-structure in an $H$-free tournament $T$. Obviously the first step of the proof will be as in galaxies and constellations, because we get this step directly from Theorem \ref{i}  for any forbidden tournament $H$. This structure is a sequence of linear and transitive sets (see its definition in page $3$).  Before explaining the rest steps of the proof we would like to speak roughly what are galaxies and constellations, how the proof works for each of these two classes, and why using the same technique will not work in our setting.   In galaxies only left and right stars are allowed, and the proof uses only the galaxy ordering of the forbidden galaxy tournament. The proof that every galaxy satifies the Erd\"{o}s-Hajnal conjecture works as follows: linear sets are used to look for centers of the stars, and for every star, we look for its leaves in the same transitive set in order to get for free the right type of adjacency between the leaves. In galaxies we are able to look for leaves of the same star in the same transitive set because we don't have centers appearing between the leaves and we don't have middle stars. This is the reason behind making the proof for galaxies work.  From here one can know why proving the conjecture for nebulas is very hard. The difficulty in nebulas is that when having middle stars or centers of stars between leaves of another star, we are obliged to look for leaves of the same star in two or more different transitive sets, because we always look for centers in linear sets. This implies that the star we are looking for can't be necessarily found, because in this case we have no idea about the orientation of the arcs that links the fixed vertices in different transitive sets. So we conclude that in our case (in galaxies with spiders) allowing middle stars and centers of stars to be between leaves of
another star, will require looking for leaves of the same star in different transitive sets. And so we don't have necessarily the correct type of adjacency between the leaves. Hence the method used in galaxies will not succeed in our case. Now roughly speaking, in constellations we have only right and left stars, and the centers of stars are allowed to be between leaves of stars as long as we are able to look for the center of a star in a transitive set that is completely different that that used to look for its leaves (see \cite{kg}), which makes the proof of constellations work. To sum up the ideas of the proof in constellations we proceed first as in the galaxy case, but instead of looking for centers in linear sets, we look for centers in transitive sets also. Then we start constructing the stars star by star. If the star is found we move to the second, and if not we will get two transitive sets, such that one of them is complete to the other one. We repeat this procedure several times, so that we can construct a sequence of transitive sets such that the one appearing first is complete to the transitive sets following it. So we use this sequence of transitive sets and enrich them with linear sets to proceed again as in galaxies and complete the proof for constellations.   So if we tried to use the technique of constellations for galaxies with spiders we will be obliged to look for a center of the star and its leaves in the same
transitive set. This is obviously impossible because no backward arcs can be found in a transitive set. This implies that we can't proceed as in constellations and look for centers of stars also in transitive sets. So the paradigm followed in constellations is not useful in our case.\vspace{2mm}

As we said at the begining we will proceed as in galaxies only by looking for the centers of the stars in linear sets and looking for leaves in transitive sets, but instead of looking for leaves in the same transitive set, we will look for them in different transitive sets when needed. What is different now is that we don't have the right type of adjacency between the leaves. That is we don't necessarily get as an outcome a spider. Instead, we will get a triangle. Here is the place where we need to think about another orderings of the vertices of our galaxy with spiders and change   the way of thinking: "\textit{searching for our galaxy with spiders by following its nebula ordering}". To this end we form the set $\Theta_{\theta}(H)$ that consists of crucial orderings of $H$ and that are obtained from $\theta$ by performing permutation for the enterior vertices of some spiders of $H$ under $\theta$. What is important about these orderings is that the portion where we have middle spiders or centers of stars between leaves of another stars is trasformed to triangles. Note also instead of using corresponding structures as that defined for galaxies (see \cite{polll}), we will  start by a smooth $(c,\lambda ,w)$-structure $\chi$ with $w=(0,1,0,1,....,0,1)$ and $\mid$$w$$\mid$ large enough, then we will construct the backward arcs only for the portion where we have middle stars and centers of stars between leaves of another stars, and then according to the outcomes we will know which ordering in $\Theta_{\theta}(H)$ we will use to complete constructing $H$ and that by using one of the associated smooth structures corresponding to $H$ under $\theta$.\\
 We are ready to give an intuition how the proof works for galaxies with spiders. Let us fix some galaxy with spiders $H$ and let $\theta$ be its galaxy with spiders ordering. By assuming that the contrary is true and fixing $\epsilon >0$ small enough, we can assume that there exists a smooth $(c,\lambda ,w)$-structure $\chi$ in an $H$-free $\epsilon$-critical tournament $T$. Our goal is to build a copy of $H$ in $T$. We will choose $\chi$ to be large enough (that is $\mid$$w$$\mid$ is large enough), with $w=(0,1,0,1,...,0,1)$. Now we will extract from $\chi$ an appropriate smooth structure that is galaxy with spiders-correlated to $H$ under $\theta$, say $\widehat{\chi}$. Now using $\widehat{\chi}$ we will construct the mutant spiders of $H$ under $\theta$ one by one after updating the smooth structure in a way that we can merge the constructed mutant spider with the previous ones. Our goal now is to extract from $\Theta_{\theta}(H)$ the ordering that will save us. When all the mutant spiders denoted by $\mathsf{S}_i$ are constructed, we will be in front of two possibilities: either $T$$\mid$$V(\mathsf{S}_i)$ is a spider, or $T$$\mid$$V(\mathsf{S}_i)$ contains a triangle. If the former holds, the spider is fixed. Otherwise, the triangle found is fixed and the rest of the vertices of $V(\mathsf{S}_i)$ are ignored. We repeat this procedure for all the mutant spiders constructed (that is for all $i$). We now get a sequence of spiders and triangles constructed in $\chi$. There exists a unique ordering $\theta^{'}$ in $\Theta_{\theta}(H)$ having $\{s,t\}$-vector as the $\{s,t\}$-vector corresponding to the sequence of spiders and triangles constructed in $\chi$. This ordering $\theta^{'}$ is the hoped ordering. Now in order to complete constructing $H$, we will follow the ordering $\theta^{'}$. Besides the sets in $\chi$ used to construct the spiders and triangles, we will add some sets from $\chi$ to form a galaxy with spiders-associated smooth structure $\overline{\chi}$ of $H$ under $\theta^{'}$. We can do that because we started by a large enough smooth structure $\chi$. Note that the outcome will be one of the associated structures in $\chi_{\theta^{'}}(H)$. Finally we complete constructing the rest of the stars of $H$ under $\theta^{'}$ star by star and after merging each star by the triangles and spiders previously constructed. After merging all the stars with the spiders and triangles we get a copy of $H$ in $T$, and we are done.     
\subsection{Proof of Theorem \ref{p}} \label{proof}
\hspace{3.5mm} In the following lemma we follow the induction technique used in \cite{polll} and Lemma \ref{r} to prove that we can find a copy of the mutant spiders of a galaxy with spiders in any clutter-associated smooth structure. This is done by constructing mutant spiders one by one.  
\begin{lemma}\label{o}
Let $H$ be a clutter under an ordering $\theta$ of its vertices with $\mid$$H$$\mid$ $= h$. Let $\mathcal{S}_{1},...,\mathcal{S}_{l}$ be the spiders of $H$ under $\theta$. Let $0 < \lambda < \frac{1}{(2h)^{h+3}}$, $c > 0$ be constants, and $w$ be a $\lbrace 0,1 \rbrace$-vector. Fix $k \in \lbrace 0,...,l \rbrace$ and let $\widehat{\lambda} = (2h)^{l-k}\lambda$ and $\widehat{c} = \frac{c}{(2h)^{l-k}}$. There exist $ \epsilon_{k} > 0$ such that $\forall 0 < \epsilon < \epsilon_{k}$, for every $\epsilon$-critical tournament $T$ with $\mid$$T$$\mid$ $= n$ containing $\chi = (S_{1},...,S_{\mid w \mid})$ as a smooth $(\widehat{c},\widehat{\lambda},w)$-structure which is clutter-associated to $H^{k} = H$$\mid$$\bigcup_{j=1}^{k}V(\mathcal{S}_{j})$ (where $H^{l} = H$, and $H^{0}$ is the empty tournament) under the ordering $\theta_{k} = (h_{1},...,h_{\mid H^{k} \mid})$ (where $\theta_{k}$ is the restriction of $\theta$ to $V(H^{k})$), we have $\tilde{H}^{k}$ is well-contained in $\chi$ (where $\tilde{H}^{k}$ is the mutant clutter corresponding to $H^{k}$ under $\theta $).  
\end{lemma}
\begin{proof}
The proof is by induction on $k$. For $k=0$ the statement is obvious since $\tilde{H}^{0}$ is the empty digraph. Suppose that $\chi = (S_{1},...,S_{\mid w \mid})$ is a smooth $(\widehat{c},\widehat{\lambda},w)$-structure in $T$ which is clutter-associated to $H^{k}$ under the ordering $\theta_{k}$. Let $\mathcal{S}_{k} = \lbrace h_{q_{1}},...,h_{q_{p}} \rbrace$ and assume without loss of generality that $\mathcal{S}_{k} = \lbrace h_{q_{1}},...,h_{q_{m}},h_{q_{m+1}},...,h_{q_{m+5}},$ $h_{q_{m+6}},...,h_{q_{p}} \rbrace$ is a middle spider$_{1}$. For all $ i \in [p]$, let $D_{i} = \lbrace v \in \displaystyle{\bigcup_{j=1}^{\mid w \mid}S_{j}};$ $ \xi(v) = q_{i} \rbrace$. Then there exists $ 1 \leq y<y+4 \leq \mid$$w$$\mid$, such that $D_{i} \subseteq S^{i}_{y}$ for $i=1,...,m$, $D_{m+1} = S_{y+1}$, $D_{m+2} = S^{1}_{y+2}$, $D_{m+3} = S^{2}_{y+2}$, $D_{m+4} = S^{3}_{y+2}$, $D_{m+5} = S_{y+3}$, $D_{i} \subseteq S_{y+4}$ for $i=m+6,...,p$, with $w(y)=w(y+2)=w(y+4)= 1$ and $w(y+1) =w(y+3)= 0$. Since we can assume that $\epsilon < log_{\frac{\widehat{c}}{2h}}(1-\frac{\widehat{c}}{h})$, then by Lemma \ref{r}, there exist vertices $d_{1},...,d_{m},d_{m+1},d_{m+4}$, such that $d_{i} \in D_{i}$ for $i=1,...,m+1,m+4$ and $\lbrace d_{1},...,d_{m}\rbrace \leftarrow d_{m+1} \leftarrow d_{m+4}$.\\ Let $D_{m+2}^{*} = \lbrace d_{m+2}\in D_{m+2}; \lbrace d_{1},...,d_{m+1}\rbrace\rightarrow d_{m+2} \rbrace$, $D_{i}^{*} = \lbrace d_{i}\in D_{i}; \lbrace d_{1},...,d_{m+1},d_{m+4}\rbrace\rightarrow d_{i} \rbrace$ for $i=m+5,...,p$. Then by Lemma \ref{g}, $\mid$$D_{m+5}^{*}$$\mid$ $\geq (1-h\widehat{\lambda})\widehat{c}n\geq \frac{\widehat{c}}{2}n$ since $\widehat{\lambda} \leq \frac{1}{2h}$ and $\mid$$D_{i}^{*}$$\mid$ $\geq \frac{1-h^{2}\widehat{\lambda}}{h}\widehat{c}tr(T)\geq \frac{\widehat{c}}{2h}tr(T)$ for $i=m+2,m+6,...,p$ since $\widehat{\lambda} \leq \frac{1}{2h^{2}}$. Since we can assume that $\epsilon < log_{\frac{\widehat{c}}{4h}}(1-\frac{\widehat{c}}{2h})$, then by Lemma \ref{r}, there exist vertices $d_{m+2},d_{m+5},...,d_{p}$, such that $d_{i} \in D^{*}_{i}$ for $i=m+2,m+5,...,p$ and $d_{m+2}\leftarrow d_{m+5}\leftarrow\lbrace d_{m+6},...,d_{p}\rbrace $. 
  Let $D_{m+3}^{*} = \lbrace d_{m+3}\in D_{m+3}; \lbrace d_{1},...,d_{m+1}\rbrace\rightarrow d_{m+3} \rightarrow \lbrace d_{m+5},...,d_{p}\rbrace \rbrace$. Then by Lemma \ref{g}, $\mid$$D_{m+3}^{*}$$\mid$ $\geq (1-3h\widehat{\lambda})\frac{\mid S_{y+2}\mid}{3}\geq \frac{\mid S_{y+2}\mid}{6} $ since $\widehat{\lambda} \leq \frac{1}{6h}$. Then $D_{m+3}^{*} \neq\phi$. Fix $d_{m+3}\in D_{m+3}^{*}$. So $T$$\mid$$\lbrace d_{1},...d_{p}\rbrace$ contains a copy of $\tilde{H}^{k}$$\mid$$V(\tilde{S_{k}})$. Denote this copy by $Y$.\\
  For all $ i \in \lbrace 1,...,\mid$$w$$\mid \rbrace \backslash \lbrace y,...,y+4 \rbrace$, let $S_{i}^{*} = \displaystyle{\bigcap_{x\in V(Y)}S_{i,x}}$.  Then by Lemma \ref{g}, $\mid$$S_{i}^{*}$$\mid$ $\geq (1-h\widehat{\lambda})\mid$$S_{i}$$\mid$ $\geq \frac{1}{2h}\mid$$S_{i}$$\mid$ since $\widehat{\lambda} \leq \frac{2h-1}{2h^{2}}$.
Write $\mathcal{H} = \lbrace 1,...,\mid $$H^{k}$$ \mid \rbrace \backslash \lbrace q_{1},...,q_{p} \rbrace$. If $\lbrace v\in S_{y}: \xi(v) \in \mathcal{H} \rbrace \neq \phi$, then define $J_{y} = \lbrace \eta \in \mathcal{H}: \exists v \in S_{y}$ and $\xi(v)= \eta \rbrace$. Now for all $ \eta \in J_{y}$, let $S_{y}^{*\eta}= \lbrace v \in S_{y}: \xi(v)=\eta$ and $v \in\bigcap_{i=m+1}^{p} S_{y,d_{i}} \rbrace$. By Lemma \ref{g}, for all $ \eta \in J_{y}$, we have $\mid$$S_{y}^{*\eta}$$\mid$ $\geq (1-h\delta^{w}(y)\widehat{\lambda})\frac{\mid S_{y}\mid}{\delta^{w}(y)} \geq \frac{1-h^{2}\widehat{\lambda}}{h}\mid $$S_{y}$$\mid $ $\geq \frac{\mid S_{y}\mid}{2h}$ since $\widehat{\lambda} \leq \frac{1}{2h^{2}}$. Now for all $ \eta \in J_{y}$, select arbitrary $\lceil \frac{\mid S_{y}\mid}{2h}\rceil$ vertices of $S_{y}^{*\eta}$ and denote the union of these $\mid$$J_{y}$$\mid$ sets by $S_{y}^{*}$.
So we have defined $t$ sets $S_{1}^{*},...,S^{*}_{t}$, where $t = \mid$$w$$\mid$ $-4$ if $S_{y}^{*}$ is defined and $t = \mid$$w$$\mid$ $-5$ if $S_{y}^{*}$ is not defined. We have $\mid$$S_{i}^{*}$$\mid$ $\geq \frac{\widehat{c}}{2h}tr(T)$ for every defined $S_{i}^{*}$ with $w(i) = 1$, and $\mid$$S_{i}^{*}$$\mid$ $\geq \frac{\widehat{c}}{2h}n$ for every defined $S_{i}^{*}$ with $w(i) = 0$. Now Lemma \ref{b} implies that $\chi^{*}=(S_{1}^{*},...,S^{*}_{t})$ form a smooth $(\frac{\widehat{c}}{2h},2h\widehat{\lambda},w^{*})$$-$structure of $T$ which is clutter-associated to $\tilde{H}^{k-1}$ under $\theta_{k-1}=(h_{1},...,h_{\mid H^{k-1} \mid})$ for an appropriate $\lbrace 0,1 \rbrace$$-$vector $w^{*}$. 
Now take $\epsilon_{k} < min \lbrace \epsilon_{k-1}, log_{\frac{\widehat{c}}{4h}}(1-\frac{\widehat{c}}{2h}) \rbrace$. So by induction hypothesis $\tilde{H}^{k-1}$ is well-contained in $\chi^{*}$. Now by merging the well-contained copy of $\tilde{H}^{k-1}$ and $Y$  we get a copy of $\tilde{H}^{k}$. $\hfill{\square}$ 
\end{proof}
\vspace{4mm}

In \cite{polll} Berger et al. proved that if $H$ is a galaxy under $\theta$, then we can find $H$ as a well-contained copy in a corresponding structure of $H$ under $\theta$. Following a similar proof, in the following lemma we will prove that we can find $H$ as a well-contained copy in any  galaxy-associated smooth structure of $H$  under $\theta$ (note that the corresponding structure is one of the galaxy-associated structures).  
\begin{lemma} \label{l}
Let $H$ be a regular galaxy under an ordering $\theta$ of its vertices with $\mid$$H$$\mid$ $= h$. Let $Q_{1},...,Q_{m}$ be the stars of $H$ under $\theta$. Let $0 < \lambda < \frac{1}{(2h)^{h+2}}$, $c > 0$ be constants, and $w$ be a $\lbrace 0,1 \rbrace$$-$vector. Fix $k \in \lbrace 0,...,m \rbrace$ and let $\widehat{\lambda} = (2h)^{m-k}\lambda$ and $\widehat{c} = \frac{c}{(2h)^{m-k}}$. There exist $ \epsilon_{k} > 0$ such that $\forall 0 < \epsilon < \epsilon_{k}$, for every $\epsilon$$-$critical tournament $T$ with $\mid$$T$$\mid$ $= n$ containing $\chi = (S_{1},...,S_{\mid w \mid})$ as a smooth $(\widehat{c},\widehat{\lambda},w)$$-$structure which is galaxy-associated to $H^{k}= H$$\mid$$\bigcup_{j=1}^{k}V(Q_{j})$ (where $H^{m} = H$ and $H^{0}$ is the empty tournament) under the ordering $\theta_{k}= (h_{1},...,h_{\mid H^{k} \mid})$, we have $H^{k}$ is well-contained in $\chi$.  
\end{lemma}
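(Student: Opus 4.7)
The plan is to mimic the inductive scheme used in Lemma \ref{d} and Lemma \ref{o}, but specialized to stars (which are strictly simpler than $\beta$-asteroids or spiders, since a star has only one non-leaf vertex, namely its center). The induction is on $k$. For $k=0$ there is nothing to show because $H^{0}$ is the empty tournament, so the inductive step is the heart of the argument.

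For the inductive step, assume the conclusion for $k-1$ and suppose $\chi=(S_{1},\ldots,S_{\mid w\mid})$ is a smooth $(\widehat{c},\widehat{\lambda},w)$-structure of $T$ which is galaxy-associated to $H^{k}$ under $\theta_{k}$. Write $Q_{k}=\{h_{q_{1}},\ldots,h_{q_{p}}\}$. Treat separately the cases in which $Q_{k}$ is a left star and a right star; I describe the right star case and the other is analogous. Then the center $h_{q_{p}}$ corresponds to a linear set $S_{y'}$ (with $w(y')=0$) and the $p-1$ leaves $h_{q_{1}},\ldots,h_{q_{p-1}}$ correspond to a transitive set $S_{y}$ (with $w(y)=1$) subdivided consecutively into $p-1$ chunks $S_{y}^{1},\ldots,S_{y}^{p-1}$. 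First I would invoke Lemma \ref{r} (with $\epsilon_{k}<\log_{\widehat{c}/(4h)}(1-\widehat{c}/(2h))$, say) to pull a center vertex $d_{p}\in S_{y'}$ together with one vertex per chunk $S_{y}^{i}$ so that $d_{p}$ is adjacent from $d_{1},\ldots,d_{p-1}$ as required by the right-star backward arcs; the appropriate filtering of each $S_{y}^{i}$ by the adjacency of $d_{p}$ (via Lemma \ref{g}) keeps these chunks of size at least a constant fraction of $\mid S_{y}\mid/\delta^{w}(y)$, which is $\ge (\widehat{c}/(2h))\,tr(T)$ because $\widehat{\lambda}$ is taken small enough (e.g.\ $\widehat{\lambda}\le 1/(2h^{2})$). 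This produces a copy $Y$ of $Q_{k}$ in $T$.

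Next I would remove $Y$ from the remaining sets, so that what is left is still a smooth structure galaxy-associated to $H^{k-1}$. Concretely, for each $i\notin\{y,y'\}$ set $S_{i}^{*}=\bigcap_{x\in V(Y)}S_{i,x}$; Lemma \ref{g} gives $\mid S_{i}^{*}\mid\ge (1-p\widehat{\lambda})\mid S_{i}\mid\ge \mid S_{i}\mid/(2h)$ because $p\le h$ and $\widehat{\lambda}\le (2h-1)/(2hp)$. For the set $S_{y}$ that carried the leaves, if any vertices of $H^{k-1}$ are scheduled to live inside $S_{y}$, say on the index set $J_{y}\subseteq\{1,\ldots,\mid H^{k-1}\mid\}$, then for each $\eta\in J_{y}$ I define $S_{y}^{*\eta}=\{v\in S_{y}:\xi(v)=\eta\ \text{and}\ v\in\bigcap_{x\in V(Y)\setminus\{d_{1},\ldots,d_{p-1}\}}S_{y,x}\}$, again of size $\ge \mid S_{y}\mid/(2h)$ by Lemma \ref{g}, and take an arbitrary common-size subset of each and union them to form $S_{y}^{*}$. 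The set $S_{y'}$ is handled similarly (or simply discarded if $w$ no longer needs an entry there). By Lemma \ref{b} the resulting sequence $\chi^{*}$ is a smooth $(\widehat{c}/(2h),\,2h\widehat{\lambda},\,w^{*})$-structure of $T$ which is galaxy-associated to $H^{k-1}$ under $\theta_{k-1}$, with the parameters $\widehat{c}/(2h)=c/(2h)^{m-(k-1)}$ and $2h\widehat{\lambda}=(2h)^{m-(k-1)}\lambda$ exactly matching the inductive hypothesis.

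Finally, taking $\epsilon_{k}<\min\{\epsilon_{k-1},\log_{\widehat{c}/(4h)}(1-\widehat{c}/(2h))\}$, the induction hypothesis well-contains $H^{k-1}$ in $\chi^{*}$; merging this copy with $Y$ produces a well-contained copy of $H^{k}$ in $\chi$, because by construction no vertex of $Y$ interferes with the injectivity or with the adjacencies forced between $V(Q_{k})$ and $V(H^{k-1})$ (the filters $S_{i}^{*}$ and $S_{y}^{*}$ were built exactly so that every vertex of $H^{k-1}$ we recover is adjacent to or from each $d_{j}$ in the direction prescribed by $\theta_{k}$, and the galaxy hypothesis---in particular that no center of a star is between leaves of another star---guarantees these prescribed directions are consistent). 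The step I expect to be the most delicate is this last consistency check: bookkeeping the two cases ``$Q_{k}$ is a right star'' vs.\ ``$Q_{k}$ is a left star'' and verifying that the left-to-right order inside $\widehat\theta$ correctly forces, through Lemma \ref{g}, the right adjacency directions between the surviving $S_{i}^{*}$ and the extracted copy $Y$; apart from that, the calculation of constants is a routine iteration of the bounds in Lemmas \ref{g}, \ref{f}, \ref{r}, and \ref{b}, exactly as in the proofs of Lemmas \ref{d} and \ref{o}.
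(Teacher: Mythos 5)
Your proposal follows essentially the same route as the paper's proof: induct on the number of stars, extract a copy of $Q_{k}$ by applying Lemma \ref{r} to the linear set holding the center and the chunks of the transitive set holding the leaves, shrink the remaining sets via Lemma \ref{g} (filtering the leaf-carrying transitive set only by the center of $Y$), rescale with Lemma \ref{b} to get a smooth $(\widehat{c}/(2h),2h\widehat{\lambda},w^{*})$-structure galaxy-associated to $H^{k-1}$, and merge with the inductive copy. The only blemish is a terminological slip (for a right star the center is adjacent \emph{to} its leaves, not \emph{from} them), which does not affect the argument since Lemma \ref{r} supplies both orientations.
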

\begin{proof}
The proof is by induction on $k$. For $k=0$ the statement is obvious since $H^{0}$ is the empty tournament. Suppose that $\chi = (S_{1},...,S_{\mid w \mid})$ is a smooth $(\widehat{c},\widehat{\lambda},w)$$-$structure in $T$ which is galaxy-associated to $H^{k}$ under the ordering $\theta_{k}$.  Let $h_{p_{0}}$ be the center of $Q_{k}$ and $h_{p_{1}},...,h_{p_{q}}$ be its leaves for some integer $q>0$. 
For all $ i \in \{0,...,q\}$, let $R_{i} = \lbrace v \in \displaystyle{\bigcup_{j=1}^{\mid w \mid}S_{j}};$ $ \xi(v) = p_{i} \rbrace$.
Then there exists $ z \in \lbrace 1,...,\mid$$w$$\mid \rbrace$ with $w(z)=0$, and there exists $ f \in \lbrace 1,...,\mid$$w$$\mid \rbrace$ with $w(f)=1$, such that $R_{0} = S_{z}$ and for all $ i \in [q]$, $R_{i} \subseteq S_{f}$. Since we can assume that $\epsilon < log_{\frac{\widehat{c}}{2h}}(1-\frac{\widehat{c}}{h})$, then by Lemma \ref{r}, there exist vertices $r_{0},r_{1},...,r_{q}$, such that $r_{i} \in R_{i}$ for $i=0,1,...,q$ and \\
$\ast$ $r_{1},...,r_{q}$ are all adjacent from $r_{0}$ if $z>f$.\\
$\ast$ $r_{1},...,r_{q}$ are all adjacent to $r_{0}$ if $z<f$.\\
So $T$$\mid$$\lbrace r_{0},r_{1},...,r_{q} \rbrace$ induces a copy of $H^{k}$$\mid$$V(Q_{k})$. Denote this copy by $Y$.\\    
For all $ i \in \lbrace 1,...,\mid$$w$$\mid \rbrace \backslash \lbrace z,f \rbrace$, let $S_{i}^{*} = \displaystyle{\bigcap_{p\in V(Y)}S_{i,p}}$.  Then by Lemma \ref{g}, $\mid$$S_{i}^{*}$$\mid$ $\geq (1-\mid$$Y$$\mid\widehat{\lambda})\mid$$S_{i}$$\mid$ $\geq (1-h\widehat{\lambda})\mid$$S_{i}$$\mid$ $\geq \frac{1}{2h}\mid$$S_{i}$$\mid$ since $\widehat{\lambda} \leq \frac{2h-1}{2h^{2}}$.
Write $\mathcal{H} = \lbrace 1,...,\mid $$H^{k}$$ \mid \rbrace \backslash \lbrace p_{0},...,p_{q} \rbrace$. If $\lbrace v\in S_{f}: \xi(v) \in \mathcal{H} \rbrace \neq \phi$, then define $J_{f} = \lbrace \eta \in \mathcal{H}: \exists v \in S_{f}$ and $\xi(v)= \eta \rbrace$. Now for all $ \eta \in J_{f}$, let $S_{f}^{*\eta}= \lbrace v \in S_{f}: \xi(v)=\eta$ and $v \in S_{f,r_{0}} \rbrace$. Then by Lemma \ref{g}, for all $ \eta \in J_{f}$, we have $\mid$$S_{f}^{*\eta}$$\mid$ $\geq (1-\delta^{w}(f)\widehat{\lambda})\frac{\mid S_{f}\mid}{\delta^{w}(f)} \geq \frac{1-h\widehat{\lambda}}{h}\mid $$S_{f}$$\mid $ $\geq \frac{\mid S_{f}\mid}{2h}$ since $\widehat{\lambda} \leq \frac{1}{2h}$. Now for all $ \eta \in J_{f}$, select arbitrary $\lceil \frac{\mid S_{f}\mid}{2h}\rceil$ vertices of $S_{f}^{*\eta}$, and denote the union of these $\mid$$J_{f}$$\mid$ sets by $S_{f}^{*}$. 
So we have defined $t$ sets $S_{1}^{*},...,S^{*}_{t}$, where $t = \mid$$w$$\mid$ $-1$ if $S_{f}^{*}$ is defined and $t = \mid$$w$$\mid$ $-2$ if $S_{f}^{*}$ is not defined. We have $\mid$$S_{i}^{*}$$\mid$ $\geq \frac{\widehat{c}}{2h}tr(T)$ for every defined $S_{i}^{*}$ with $w(i) = 1$, and $\mid$$S_{i}^{*}$$\mid$ $\geq \frac{\widehat{c}}{2h}n$ for every defined $S_{i}^{*}$ with $w(i) = 0$. Now Lemma \ref{b} implies that $\chi^{*}=(S_{1}^{*},...,S^{*}_{t})$ form a smooth $(\frac{\widehat{c}}{2h},2h\widehat{\lambda},w^{*})$$-$structure of $T$ which galaxy-associated to $H^{k-1}$, where $\frac{\widehat{c}}{2h}= \frac{c}{(2h)^{m-(k-1)}}, 2h\widehat{\lambda}=(2h)^{m-(k-1)}\lambda$, and $w^{*}$ is an appropriate $\lbrace 0,1 \rbrace$-vector.
Now take $\epsilon_{k} < min \lbrace \epsilon_{k-1}, log_{\frac{\widehat{c}}{2h}}(1-\frac{\widehat{c}}{h}) \rbrace$. So by induction hypothesis $H^{k-1}$ is well-contained in $\chi^{*}$. Now by merging the well-contained copy of $H^{k-1}$ and $Y$ we get a copy of $H^{k}$. $\hfill{\square}$ 
\end{proof}
\vspace{4mm}

We are ready to prove our main theorem in this section: \vspace{4mm}\\
\noindent \sl {Proof of Theorem \ref{p}. }\upshape
Fix some galaxy with spiders $H$ and let $\theta = (v_{1},...,v_{h})$ be its galaxy with spiders ordering, with $\mid$$H$$\mid$ $=h$. Let $d\in \mathbb{N}^{*}$ be an appropriate integer ($d= (20h^{5})!$). Let $\mathcal{S}_{1},...,\mathcal{S}_{l}$  be the spiders of $H$ under $\theta$. Let $K= H$$\mid$$\bigcup_{i=1}^{l}V(\mathcal{S}_{i})$ and let $\tilde{\theta}=(v_{s_{1}},...,v_{s_{r}})$ be the restriction of $\theta$ to $V(K)$. Let $\tilde{K}$ be the mutant clutter corresponding to $K$ under $\tilde{\theta}$. Let $T$ be an $\epsilon$-critical tournament. It is enough to show that for $\epsilon > 0$ small enough $T$ is not $H$-free. By Lemma \ref{e}, we may assume that $\mid$$T$$\mid$ is large enough. We may assume by Corollary \ref{c} that $T$ contains a smooth $(c_{0},\lambda_{0},w)$-structure $\chi =(A_{1},...,A_{\mid w\mid})$, for some appropriate constant $c_{0}>0$, arbitrary small enough positive number $\lambda_{0}$, and $\lbrace 0,1 \rbrace$-vector $w=(0,1,0,1,...,0,1)$, where $\mid$$w$$\mid$ is large enough, such that for all $ i\in [\mid$$w$$\mid ]$, $\mid$$A_{i}$$\mid$ is divisible by $d$. $A_{1},...,A_{\mid w\mid}$ are called \textit{sets of} $\chi$. Let $i_{s}$ be such that $w(i_{s}) = 1$. Assume that $A_{i_{s}} = \lbrace a^{1}_{i_{s}},...,a_{i_{s}}^{\mid A_{i_{s}} \mid} \rbrace$ and $(a^{1}_{i_{s}},...,a_{i_{s}}^{\mid A_{i_{s}} \mid})$ is a transitive ordering. Write $m(i_{s}) = \frac{\mid A_{i_{s}} \mid}{d}$. Denote $A^{j}_{i_{s}} = \lbrace a^{(j-1)m(i_{s})+1}_{i_{s}},...,a_{i_{s}}^{jm(i_{s})} \rbrace$ for $j \in \lbrace 1,...,d \rbrace$. Note that the sets $A^{j}_{i_{s}}$ for $j=1,...,d$ are also called \textit{sets of} $\chi$. We may assume that we can extract from $\chi$ an appropriate smooth $(\frac{c_{0}}{d},d\lambda_{0},\hat{w})$-structure $\hat{\chi}=(F_{1},...,F_{\mid \hat{w}\mid})$ which is galaxy with spiders-correlated to $H$ under $\theta$, where $\hat{w}$ is an appropriate $\lbrace 0,1 \rbrace$-vector. The extracted smooth $(\frac{c_{0}}{d},d\lambda_{0},\hat{w})$-structure  $\hat{\chi}$ satisfies the following: Let $i\in [\mid$$\hat{w}$$\mid ]$. If $\hat{w}(i)=0$, then $F_{i}=A_{i_{1}}$ with $ i_{1}\in$ $ [\mid$$w$$\mid ]$ and $w(i_{1})=0$. If $\hat{w}(i)=1$, then $F_{i}=\displaystyle{\bigcup_{j\in B}}A_{i_{t}}^{j}$ $=\bigcup_{j=1}^{\delta^{\hat{w}}(i)}F_{i}^{j}$ with $w(i_{t})=1$, appropriate $B=\lbrace j_{1},...,j_{\delta^{\hat{w}}(i)}\rbrace\subseteq \lbrace 2h,...,d-2h\rbrace$ such that for all $ 1\leq f\leq \delta^{\hat{w}}(i)-1$, $j_{f+1}-j_{f}>2h$ (note that for all $ f\in\lbrace 1,...,\delta^{\hat{w}}(i)-1\rbrace$, $F_{i}^{f}$ is complete to $F_{i}^{j}$ for $j = f+1,...,\delta^{\hat{w}}(i)$). Moreover, for all $ i\in [\mid$$\hat{w}$$\mid -1]$ with $F_{i}\subseteq A_{r_{1}}$ and $F_{i+1}\subseteq A_{r_{2}}$ for some $1\leq r_{1}<r_{2}\leq \mid$$w$$\mid$, we have: $r_{2}-r_{1}> 2h$ and $r_1>h$.\\ Now for all $ i \in [r]$, let  $S_{i} = \lbrace v \in \bigcup_{j=1}^{\mid \hat{w} \mid}F_{j};$ $ \xi(v) = s_{i} \rbrace$. For all $  j \in $ $[\mid$$\hat{w}$$\mid ]$, let $F^{*}_{j}=  \displaystyle{\bigcup_{S_{i}\subseteq F_{j}}S_{i}}$ (notice that $F^{*}_{j}\subseteq F_{j}$ or $F^{*}_{j}= \phi$). Let
 $F^{*}_{1},...,F^{*}_{\mid w^{*}\mid}$ denote the nonempty sets $F^{*}_{j}$. Then $\chi^{*}=(F^{*}_{1},...,F^{*}_{\mid w^{*}\mid})$ is a smooth $(\frac{c_{0}}{d},d\lambda_{0},w^{*})$-structure which is clutter-associated to $K$ under $\tilde{\theta}$. Since we can assume that $\lambda_{0} < \frac{1}{d(2h)^{h+3}}$, then we can use Lemma \ref{o} and conclude (taking $\epsilon$ small enough and $k=l$) that $\tilde{K}$ is well-contained in $\chi^{*}$. Denote by $\mathsf{S}_{i}$ the well-contained copy of $\tilde{\mathcal{S}_{i}}$ in $T$, and let $\gamma_{i}$ be the ordering of $V(\mathsf{S}_{i})$ according to their appearance in $\chi^{*}$ for $i=1,...,l$. Notice that we don't know the orientation of the arcs in $E(T$$\mid$$V(\mathsf{S}_{i}))\backslash E(\mathsf{S}_{i})$. Let $ i\in [l]$. If all the arcs in $E(T$$\mid$$V(\mathsf{S}_{i}))\backslash E(\mathsf{S}_{i})$ are forward with respect to $\gamma_{i}$, then $T$$\mid$$V(\mathsf{S}_{i})$ induces a copy of $\mathcal{S}_{i}$ in $T$, and in this case we redenote this copy by $X_{i}$. Otherwise a triangle will appear in $T$, that is there exists an arc $e_{i}\in E(T$$\mid$$V(\mathsf{S}_{i}))\backslash E(\mathsf{S}_{i})$, such that $e_{i}$ is backward with respect to $\gamma_{i}$. Let $e_{i}=(f_{i},a_{i})$ and let $y_{i}\in V(\mathsf{S}_{i})$, such that $T$$\mid$$\lbrace y_{i},f_{i},a_{i}\rbrace$ is a triangle of $T$$\mid$$V(\mathsf{S}_{i})$ under $\gamma_{i}$. Denote this triangle by $C_{i}$. Now apply the above rule for all $i\in [l]$. We get a sequence of spiders and triangles. Let $\lbrace \mathcal{K}_{1},...,\mathcal{K}_{l}\rbrace$ denote this sequence, where either $\mathcal{K}_{i}=X_{i}$ or $\mathcal{K}_{i}=C_{i}$. Let $G=T$$\mid$$\bigcup_{i=1}^{l}V(\mathcal{K}_{i})$ and let $\tilde{\beta}$ be the ordering of the vertices of $G$ according to their appearance in $\chi^{*}$. So there exist $\theta^{'}\in \Theta_{\theta}(H)$ such that $g^{H,\theta^{'}}=g^{G,\tilde{\beta}}$. Let $\mathsf{K}_{1},...,\mathsf{K}_{l}$ be the spiders and triangles of $H$ under $\theta^{'}=(u_{1},...,u_{h})$. Let $H_{\mathsf{K}} = H$$\mid$$\bigcup_{j=1}^{l}V(\mathsf{K}_{j})$ and let $\theta^{'}_{\mathsf{K}}=(u_{q_{1}},...,u_{q_{p}})$ be the restriction of $\theta^{'}$ to $\bigcup_{j=1}^{l}V(\mathsf{K}_{j})$. Write $\tilde{\beta} =(b_{1},...,b_{p})$ (we only rename the vertices in the ordering $\tilde{\beta}$ as $b_{1},...,b_{p})$. \\
Now let $i \in [\mid$$w^{*}$$\mid ]$ such that $w^{*}(i)=0$. If $x\notin F^{*}_{i}$ for all $x\in V(G)$, then we remove $F^{*}_{i}$ from $\chi^{*}$. Let $ i \in [\mid$$w^{*}$$\mid ]$ such that $w^{*}(i)=1$, and let $1\leq k\leq\delta^{w^{*}}(i)$. If $x\notin F^{*k}_{i}$ for all $x\in V(G)$, then we remove $F^{*k}_{i}$ from $\chi^{*}$.  We do this for all $1\leq k\leq\delta^{w^{*}}(i)$. We apply this for all $ i \in [\mid$$w^{*}$$\mid ]$. We get from $\chi^{*}$ a new smooth $(\frac{c_{0}}{d},d\lambda_{0},\tilde{w})$-structure $\widetilde{\chi^{*}}$ for some appropriate $\lbrace 0,1\rbrace$-vector $\tilde{w}$. Now since $\mid$$w$$\mid$ is large enough then we can assume that we can add some sets to $\widetilde{\chi^{*}}$ from $\chi$ to form a  smooth $(\frac{c_{0}}{d},d\lambda_{0},z)$-structure $\overline{\chi}= (E_{1},...,E_{\mid z\mid})$ which is galaxy with spiders-associated to $H$ under $\theta^{'}$ for some appropriate $\lbrace 0,1\rbrace$-vector $z$, such that $b_{i}\in B_{i}=\lbrace v \in \bigcup_{j=1}^{\mid z \mid}E_{j};$ $ \xi(v) = q_{i} \rbrace$ for $i=1,...,p$. Let $H_{Q}=H$$\mid$$(V(H)\backslash V(H_{\mathsf{K}}))$ and let $\theta^{'}_{Q}=(u_{p_{1}},...,u_{p_{q}})$ be the restriction of $\theta^{'}$ to $V(H_{Q})$. Let $Q_{1},...,Q_{m}$ be the stars of $H_{Q}$ under $\theta_{Q}^{'}$. We may assume that $H_{Q}$ is a regular galaxy under $\theta^{'}_{Q}$. For all $ i \in [q]$, let $R_{i}^{*}=\displaystyle{\bigcap_{x\in V(G)}}R_{i,x}$, where $R_{i} = \lbrace v \in \bigcup_{j=1}^{\mid z \mid}E_{j};$ $ \xi(v) = p_{i} \rbrace$. Let $  i \in [q]$. If $R_{i}= E_{j_{1}}$ for some $ j_{1} \in $ $[\mid$$z$$\mid ]$ with $z(j_{1})=0$, then by Lemma \ref{g}, $\mid$$R_{i}^{*}$$\mid$ $\geq (1-hd\lambda_{0})\mid$$E_{j_{1}}$$\mid \geq \frac{\mid E_{j_{1}}\mid}{2} \geq \frac{\mid E_{j_{1}}\mid}{2h}$ since we can assume that $\lambda_{0}\leq\frac{1}{2hd}$. In this case we only rename the set $R_{i}^{*}$ by $R^{**}_{i}$. Let $  i \in [q]$. If $R_{i}\subseteq E_{j_{2}}$ for some $ j_{2} \in $ $[\mid$$z$$\mid ]$ with $z(j_{2})=1$, then by Lemma \ref{g}, $\mid$$R_{i}^{*}$$\mid$ $\geq (1-h^{2}d\lambda_{0})\frac{\mid E_{j_{2}}\mid}{h}  \geq \frac{\mid E_{j_{2}}\mid}{2h}$ since we can assume that $\lambda_{0}\leq\frac{1}{2h^{2}d}$. In this case we select arbitrary $\lceil \frac{\mid E_{j_{2}}\mid}{2h}\rceil$ vertices from $R_{i}^{*}$ and we denote by $R_{i}^{**}$ the set of the selected $\lceil \frac{\mid E_{j_{2}}\mid}{2h}\rceil$ vertices. Now for all $ j \in $ $[\mid$$z$$\mid ]$, let $\overline{E}_{j}=  \displaystyle{\bigcup_{R^{**}_{i}\subseteq E_{j}}R^{**}_{i}}$ (notice that $\overline{E}_{j}\subseteq E_{j}$ or $\overline{E}_{j}= \phi$). Let $\overline{E}_{1},...,\overline{E}_{\mid \overline{z}\mid}$ denote the nonempty sets $\overline{E}_{j}$. Also notice that for all $ j\in [\mid$$\overline{z}$$\mid ]$, $\mid$$\overline{E_{j}}$$\mid$ $\geq \frac{\mid E_{s}\mid}{2h}$ for some $ s\in$ $[\mid$$z$$\mid ]$. Then $\chi^{'} = (\overline{E}_{1},...,\overline{E}_{\mid \overline{z}\mid})$ form a smooth $(\frac{c_{0}}{2hd},2hd\lambda_{0},\overline{z})$-structure of $T$ which is galaxy-associated to $H_{Q}$ under $\theta^{'}_{Q}$ for an appropriate $\lbrace 0,1 \rbrace$-vector $\overline{z}$. Now since we can assume that $\lambda_{0} < \frac{1}{d(2h)^{h+3}}$, then we can use Lemma \ref{l} and conclude (taking $\epsilon$ small enough and $k=m$) that $H_{Q}$ is well-contained in $\chi^{'}$. Denote this copy by $W$. Now by merging $W$ and $G$ we get a copy of $H$ in $T$. So $T$ contains $H$ as a subtournament. That completes the proof of Theorem \ref{p}. $\hfill{ \square}$

\section{Merging Operation}
  A tournament $T$ is obtained by \textit{merging an asterism and a galaxy with spiders} if there exist an ordering $ \theta $ of its vertices such that $V(T)$ is the disjoint union of $V(\mathcal{A}^{\beta}_{1}),...,V(\mathcal{A}^{\beta}_{l}),Z,X$ where $\mathcal{A}^{\beta}_{1},...,\mathcal{A}^{\beta}_{l}$ are the $\beta$$-$asteroids of $T$ under $\theta$, $T$$\mid$$(Z\cup X)$ is a galaxy with spiders under $\hat{\theta}$ where $\hat{\theta}$ is the restriction of $\theta$ to $Z\cup X$, $T$$\mid$$Z$ is a clutter under $\theta_{Z}$ where $\theta_{Z}$ is the restriction of $\theta$ to $Z$, and $T$$\mid$$X$ is a galaxy under $\theta_{X}$ where $\theta_{X}$ is the restriction of $\theta$ to $X$, no vertex of a $\beta$$-$asteroid appears in the ordering $\theta$ between leaves of a star of $T$$\mid$$X$ under $\theta_{X}$, no vertex of a $\beta$$-$asteroid appears in the ordering $\theta$ between legs of a spider of $T$ under $\theta$ incident to the same center of this spider, and $\theta$ satisfies the following: let $v$ and $v'$ be legs of distinct spiders $\mathcal{S}_{i}$ and $\mathcal{S}_{j}$ of $T$$\mid$$(Z\cup X)$ under $\hat{\theta}$, and suppose that $v$ is in some $M\in P_{\hat{\theta}}(T$$\mid$$(Z\cup X))$. If there is a vertex of a $\beta$$-$asteroid of $T$ under $\theta$ lying between a center of $\mathcal{S}_{i}$ and a center of $\mathcal{S}_{j}$ under $\theta$ then $v' \notin M$.\vspace{2mm}
  
  By this merging procedure we can build infinitely many tournaments satisfying $EHC$ from asterisms and galaxies with spiders. 
  \begin{theorem}
  Every tournament obtained by merging an asterism and a galaxy with spiders satisfies EHC.
  \end{theorem}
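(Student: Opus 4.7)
The plan is to combine the corresponding-digraph technique of Section~\ref{q} (used for Theorem~\ref{j}) with the mutant-clutter/galaxy machinery behind Theorem~\ref{p}. Let $T$ be a tournament obtained by merging an asterism and a galaxy with spiders under an ordering $\theta$. One may assume the asterism part is regular and the galaxy-with-spiders part is a regular galaxy with spiders, since EHC is hereditary. Arguing by contradiction, fix $\epsilon,\lambda>0$ sufficiently small and suppose $T'$ is a $T$-free $\epsilon$-critical tournament with $|T'|=n$ large.

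First I would build the corresponding digraph $\widehat{T}$ and its ordering $\widehat{\theta}$ by replacing each $\beta$-asteroid $\mathcal{A}^{\beta}_i$ of $T$ under $\theta$ by its mutant $\widehat{\mathcal{A}^{\beta}_i}$ (inserting the six auxiliary vertices $m_i,g_i,x_i,w_i,r_i,y_i$ exactly as in Section~\ref{q}), while leaving the spider/star part on $Z\cup X$ unchanged. By Corollary~\ref{c}, $T'$ contains a smooth $(c_0,\lambda_0,w)$-structure $\chi=(A_1,\ldots,A_{|w|})$ that is simultaneously corresponding to $\widehat{T}$ on the $\beta$-asteroid portion and galaxy-with-spiders-correlated to $T|(Z\cup X)$ on the remaining portion, with every $|A_i|$ divisible by the integer $d=(20h^5)!$ introduced in the proof of Theorem~\ref{p}.

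The body of the argument then proceeds in three extraction stages, refining $\chi$ after each stage via Lemma~\ref{g} and Lemma~\ref{b}. In the first stage I would apply Lemma~\ref{d} (with $k=l$) to find a well-contained copy of the digraph associated with the $\beta$-asteroid family; the proof of Corollary~\ref{n} then extracts, from these mutant $\beta$-asteroid copies, actual $\beta$-asteroids realizing one of the orderings in $\Theta$. In the second stage I would restrict to the sub-family whose surviving portions are disjoint from the already-chosen asteroid vertices, extract (exactly as in the proof of Theorem~\ref{p}) a smooth substructure $\chi^{*}$ that is clutter-associated to the spider part, and invoke Lemma~\ref{o} to well-contain the corresponding mutant clutter $\tilde{K}$; each mutant spider $\tilde{\mathcal{S}_i}$ then yields either a full spider $X_i$ or a triangle $C_i$ depending on the orientations of the previously-free backward arcs, producing a super galaxy ordering for the spider layer. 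In the third stage I would refine once more and apply Lemma~\ref{l} to the surviving stars, giving a well-contained copy of the galaxy $H_Q$. Merging the three extracted pieces then produces a copy of $T$ inside $T'$, contradicting $T$-freeness.

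The main obstacle is bookkeeping rather than any single new estimate. Specifically, one has to verify that the position-compatibility hypotheses built into the definition of the merging operation — no vertex of a $\beta$-asteroid between leaves of a star or between legs of a spider incident to a common center, and the $P_{\hat{\theta}}$ condition restricting legs of distinct spiders — are exactly what is needed so that the smooth structure surviving the first stage is still galaxy-with-spiders-correlated to $T|(Z\cup X)$, and so that the final merge respects all the arcs of $T$ rather than producing some unintended ordering. Concretely, one must track which interval of $\widehat{\theta}$ is occupied by each $\widehat{\mathcal{A}^{\beta}_i}$ and check that these intervals never cross the forbidden spans, and then verify that the constants $(c,\lambda)$ remain small enough (after divisions by powers of $2h$ and $d$) to satisfy the hypotheses of Lemmas~\ref{d}, \ref{o} and \ref{l} in turn; this, together with the case analysis for cyclic versus forest orderings of each spider and $\beta$-asteroid as in Corollary~\ref{n} and the proof of Theorem~\ref{p}, is the technical heart of the argument.
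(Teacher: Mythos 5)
Your proposal follows exactly the route the paper intends: the paper in fact omits the proof of this theorem entirely, stating only that it is ``a combination between the proofs of Theorems \ref{j} and \ref{p},'' and your three-stage sketch (corresponding digraph plus Lemma \ref{d} and Corollary \ref{n} for the $\beta$-asteroids, then Lemma \ref{o} for the mutant clutter, then Lemma \ref{l} for the stars, followed by the merge) is precisely that combination. Your sketch is therefore consistent with, and considerably more explicit than, what the paper provides.
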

We omit the proof of the above theorem since it is a combination between the proof of the Theorems \ref{j} and \ref{p}.

\end{document}